\newcommand {\N} {{\mathbb N}}
\newcommand {\C} {{\mathbb C}}
\newcommand {\Z} {{\mathbb Z}}
\newcommand {\Q} {{\mathbb Q}}
\newcommand {\cF} {{\mathcal F}}
\newcommand {\tcF} {\tilde{{\mathcal F}}}
\newcommand {\cH} {{\mathcal H}}
\newcommand {\cG} {{\mathcal G}}
\newcommand {\cL} {{\mathcal L}}
\newcommand {\tcL} {\tilde{{\mathcal L}}}
\newcommand {\cM} {{\mathcal M}}
\newcommand {\cV} {{\mathcal V}}
\newcommand {\cO} {{\mathcal O}}
\newcommand {\D} {\mathbb{D}}
\newcommand{\tf}{\tilde{f}}
\newcommand{\tX}{\tilde{X}}
\newcommand{\cD}{\mathcal{D}}
\newcommand{\cN}{\mathcal{N}}
\newcommand{\bR}{\textbf{R}}
\newcommand{\tcM}{\tilde{\cM}}
\newcommand{\bH}{\textbf{H}}
\newcommand{\bT}{\begin{tikzcd}}
\newcommand{\eT}{\end{tikzcd}}
\DeclareMathOperator{\im}{im}
\newtheorem{thm}[subsection]{Theorem}
\newtheorem{cor}[subsection]{Corollary}
\newtheorem{lemma}[subsection]{Lemma}
\newtheorem{prop}[subsection]{Proposition}
\newtheorem{defn}[subsection]{Definition}
\newtheorem{rmk}[subsection]{Remark}
\newtheorem{ex}[subsection]{Example}
\newtheorem{set}[subsection]{Setting}
\newtheorem{nota}[subsection]{Notation}
\begin{document}
\author{Scott Hiatt }
\date{\today}
  \address{
 Department of Mathematics\\
  DePauw University\\
  Greencastle, IN 46135\\
  U.S.A.}
  \email{scotthiatt@depauw.edu}
 %\thanks{ To Someone}

\title{Vanishing of Local Cohomology with Applications to Hodge Theory}

\maketitle

\begin{abstract}
    Let $\mathbf{H} = ((H, F^{\bullet}), L)$ be a polarized variation of Hodge structure on a smooth quasi-projective variety $U.$ By M. Saito's theory of mixed Hodge modules, the variation of Hodge structure $\bold{H}$ can be viewed as a polarized Hodge module $\mathcal{M} \in HM(U).$ Let $X$  be a compactification of $U,$ and $j:U \hookrightarrow X$ is the natural map. In this paper, we use local cohomology with mixed Hodge module theory to study $j_{+}\mathcal{M} \in D^{b}MHM(X).$ In particular, we study the graded pieces of the de Rham complex $Gr^{F}_{p}DR(j_{+}\mathcal{M}) \in D^{b}_{coh}(X),$ and the Hodge structure of $H^{i}(U, L)$ for $i$ in sufficiently low degrees.
\end{abstract}

\section*{Introduction}

Suppose $\mathcal{X}$ is a smooth complex projective variety and $\bH =((H, F^{\bullet}), L)$ is a polarized variation of Hodge structure on $\mathcal{X}$. $\bH$ consists of a holomorphic vector bundle $H$ on $\mathcal{X}$ with a decreasing filtration $F^{\bullet}=F^{\bullet}H$, and an associated local system $L$ over $\Q$ (see Def. \ref{VHS} for more details). It is well known that $H^{i}(\mathcal{X}, L)$ is naturally equipped with a pure $\Q$-Hodge structure. But if $U$ is a smooth complex quasi-projective variety and $\bH$ is a polarized variation of Hodge structure on $U,$ then in general, $H^{i}(U, L)$ no longer has a pure Hodge structure, but a mixed Hodge structure. Suppose $X$ is a compactification of $U$ of dimension $n,$ and $S = X \backslash U$ with dimension $n_{S}$. D. Arapura \cite[Thm.  2]{arapura} showed that if $i \leq n- n_{S} -2$, then $H^{i}(U, \Q)$ is pure of weight $i$. Furthermore, if $i \leq n -n_{S} -2$ and $p +q = i$, then
\begin{equation}
H^{i}(U, \C) \cong  \displaystyle \bigoplus_{p+q = i}H^{q}(U, \Omega^{p}_{U}) \quad \text{and} \quad \dim H^{q}(U, \Omega^{p}_{U}) = \dim H^{p}(U, \Omega^{q}_{U}).
\end{equation}
The proof of this theorem relies on the vanishing of local cohomology. Specifically, Arapura shows that if $\pi: \mathcal{X} \rightarrow X$ is a strong log resolution, where $D = \pi^{-1}(S)_{red}$ is a simple normal crossing divisor, then
\begin{equation}
H^{q}_{D}(\mathcal{X}, \Omega^{p}_{\mathcal{X}}(\log D) \otimes \pi^{\ast}\cV) = 0 \quad \text{provided that $p +q \leq n- n_{S} -1$},
\end{equation}
where $\cV$ is a locally free sheaf on $X$ \cite[Thm. 1]{arapura}. This vanishing theorem is also used to prove a Kodaira-Nakano type vanishing theorem for quasi-projective varieties \cite[Prop. 1]{arapura}. As well as a variant of the  Lefschetz hyperplane theorem for quasi-projective varieties \cite[Prop. 2]{arapura}.

In this paper, we generalize Arapura's vanishing theorem \cite[Thm. 1]{arapura} (see \ref{nota1} for notation).

\begin{thm}\label{thm.1}(Thm. \ref{thm1.5}) Suppose $X$ is a proper complex variety, and  $f: Y \rightarrow X$ is a proper surjective map.  Let $S \subset X$ be any closed subvariety of $X$ of dimension $n_{S}$ and $E = f^{-1}(S)_{red}.$ If $\mathcal{F} \in D^{b}_{coh}(\cO_Y)$ and there exists $q \in \Z$ such that $R^{i}f_{\ast}\mathcal{F} = 0$ for all $i \geq q$, then
\begin{center}
$Ext^{p}_{E}(\mathcal{F} \otimes f^{*}\cV, \omega^{\bullet}_{Y}) = 0$ for $ p \leq -n_{S} -q $,
\end{center}
where $\cV$ is any locally free sheaf on $X.$
\end{thm}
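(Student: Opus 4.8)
The plan is to strip off the locally free sheaf, transfer the local cohomology from $Y$ down to $S$, and then recognize the answer --- via Grothendieck--Serre duality --- as the linear dual of a hypercohomology group on $S$, whose vanishing range is controlled by $\dim S=n_{S}$ together with the hypothesis $R^{i}f_{\ast}\mathcal{F}=0$ for $i\geq q$. Throughout I normalize dualizing complexes by $\omega^{\bullet}_{Z}=p_{Z}^{!}\C$ for the structure morphism $p_{Z}\colon Z\to\Spec\C$, so that $f^{!}\omega^{\bullet}_{X}=\omega^{\bullet}_{Y}$ and $i^{!}\omega^{\bullet}_{X}=\omega^{\bullet}_{S}$, where $i\colon S\hookrightarrow X$ denotes the inclusion; write $\mathbb{D}_{Z}(-)=R\mathcal{H}om_{Z}(-,\omega^{\bullet}_{Z})$, and recall that $\mathrm{Ext}^{p}_{E}(-,-)$ means $H^{p}R\Gamma_{E}\big(Y,R\mathcal{H}om_{Y}(-,-)\big)$.

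The core of the argument is a chain of canonical isomorphisms. Since $E=f^{-1}(S)_{\mathrm{red}}$, the functor $Rf_{\ast}$ carries sections supported on $E$ to sections supported on $S$, so for any $\mathcal{G}\in D^{b}_{coh}(\cO_{Y})$ one has $R\Gamma_{E}(Y,\mathcal{G})\cong R\Gamma_{S}(X,Rf_{\ast}\mathcal{G})\cong R\Gamma(S,i^{!}Rf_{\ast}\mathcal{G})$. Taking $\mathcal{G}=\mathbb{D}_{Y}(\mathcal{F}\otimes f^{\ast}\mathcal{V})$ and combining Grothendieck duality for the proper morphism $f$ (together with $f^{!}\omega^{\bullet}_{X}=\omega^{\bullet}_{Y}$) with the projection formula --- legitimate because $\mathcal{V}$ is locally free --- one gets
$$Rf_{\ast}\,\mathbb{D}_{Y}\big(\mathcal{F}\otimes f^{\ast}\mathcal{V}\big)\;\cong\;R\mathcal{H}om_{X}\!\big(Rf_{\ast}\mathcal{F}\otimes\mathcal{V},\,\omega^{\bullet}_{X}\big)\;=\;\mathbb{D}_{X}\!\big(Rf_{\ast}\mathcal{F}\otimes\mathcal{V}\big).$$
The base--change identity $i^{!}R\mathcal{H}om_{X}(-,\omega^{\bullet}_{X})\cong R\mathcal{H}om_{S}\big(Li^{\ast}(-),\omega^{\bullet}_{S}\big)$ then rewrites this as $\mathbb{D}_{S}(\mathcal{P})$, where $\mathcal{P}:=Li^{\ast}\!\big(Rf_{\ast}\mathcal{F}\otimes\mathcal{V}\big)$. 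Altogether,
$$\mathrm{Ext}^{p}_{E}\big(\mathcal{F}\otimes f^{\ast}\mathcal{V},\,\omega^{\bullet}_{Y}\big)\;\cong\;H^{p}\,R\Gamma\!\big(S,\mathbb{D}_{S}(\mathcal{P})\big).$$

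Now $S$ is proper (it is closed in the proper $X$), so Grothendieck--Serre duality for $p_{S}\colon S\to\Spec\C$ identifies $R\Gamma\big(S,\mathbb{D}_{S}(\mathcal{P})\big)$ with $R\mathrm{Hom}_{\C}\big(R\Gamma(S,\mathcal{P}),\C\big)$; since $\C$ is a field this yields
$$\mathrm{Ext}^{p}_{E}\big(\mathcal{F}\otimes f^{\ast}\mathcal{V},\,\omega^{\bullet}_{Y}\big)\;\cong\;\big(\mathbb{H}^{-p}(S,\mathcal{P})\big)^{\vee}.$$
It remains to bound $\mathbb{H}^{\ast}(S,\mathcal{P})$. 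The hypothesis $R^{i}f_{\ast}\mathcal{F}=0$ for $i\geq q$ says exactly that $Rf_{\ast}\mathcal{F}\in D^{\leq q-1}_{coh}(\cO_{X})$; tensoring with the locally free $\mathcal{V}$ preserves this, and $Li^{\ast}$, being right $t$-exact, does not raise cohomological degree, so $\mathcal{H}^{t}(\mathcal{P})=0$ for $t>q-1$. As $\dim S=n_{S}$, Grothendieck's vanishing theorem gives $H^{s}(S,-)=0$ for $s>n_{S}$, and the hypercohomology spectral sequence $H^{s}\big(S,\mathcal{H}^{t}(\mathcal{P})\big)\Rightarrow\mathbb{H}^{s+t}(S,\mathcal{P})$ then forces $\mathbb{H}^{j}(S,\mathcal{P})=0$ for $j>n_{S}+q-1$. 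Hence $\big(\mathbb{H}^{-p}(S,\mathcal{P})\big)^{\vee}=0$ whenever $-p>n_{S}+q-1$, i.e. for $p\leq-n_{S}-q$, which is the assertion.

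The substance lies in correctly invoking the three standard but nontrivial inputs --- the support transfer $R\Gamma_{E}(Y,-)\simeq R\Gamma_{S}(X,Rf_{\ast}(-))$, Grothendieck duality for the proper $f$ with $f^{!}\omega^{\bullet}_{X}=\omega^{\bullet}_{Y}$, and the $i^{!}$--$R\mathcal{H}om$ base change --- after which the only real care needed is to track cohomological degrees so that the bound comes out as exactly $p\leq-n_{S}-q$ and not off by one. One point to flag: when $X$ is singular, $\mathcal{P}=Li^{\ast}(\cdots)$ need not be bounded below, but this is harmless, since the spectral--sequence estimate uses only $\mathcal{H}^{t}(\mathcal{P})=0$ for $t>q-1$ and $s\leq n_{S}$, and Grothendieck--Serre duality on the proper $S$ is valid for $D^{-}_{coh}$; alternatively one can remain on $X$ throughout and apply local duality to $R\Gamma_{S}\big(X,\mathbb{D}_{X}(-)\big)$ directly, with the same degree count.
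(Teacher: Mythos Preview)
Your argument is correct and follows the same overall skeleton as the paper's proof---transfer the local cohomology from $E$ down to $S$ via Grothendieck duality (the paper's Proposition~\ref{prop1.4}), then dualize to a cohomology group whose vanishing is governed by $\dim S$ and the bound on $R^{i}f_{\ast}\cF$---but the dualization step is genuinely different. The paper stays on $X$: it invokes the Greenlees--May/Alonso--Jerem\'ias--Lipman duality \cite[5.3]{AJL} to identify the linear dual of $\bR Hom_{S}(\bR f_{\ast}\cF\otimes\cV,\omega^{\bullet}_{X})$ with cohomology on the \emph{formal completion} $\mathcal{X}$ of $X$ along $S$, and then bounds that via the inverse limit over infinitesimal neighborhoods $S_{k}$, each of dimension $n_{S}$. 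You instead pass to $S$ itself via $i^{!}\D_{X}\simeq\D_{S}Li^{\ast}$ and apply ordinary Grothendieck--Serre duality on the proper scheme $S$, bounding $\mathbb{H}^{\ast}(S,Li^{\ast}(\bR f_{\ast}\cF\otimes\cV))$ directly. Your route is lighter on machinery (no formal schemes, no \cite{AJL}) at the cost of the unboundedness issue for $Li^{\ast}$ when $X$ is singular, which you handle correctly by noting that only finitely many cohomology sheaves of $\mathcal{P}$ contribute in each degree; the paper's route avoids that issue entirely since $\kappa^{\ast}$ is exact and everything remains in $D^{b}_{coh}$. Your final remark that one could alternatively apply local duality directly to $\bR\Gamma_{S}(X,\D_{X}(-))$ is in fact closer in spirit to what the paper does.
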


One of the objectives of this paper is to apply the previous theorem with M. Saito's theory of mixed Hodge modules \cite{saito} \cite{saito2} to show Arapura's results hold for any polarized variation of Hodge structure $\bH$ on $U$.  By \cite{saito}, any polarized variation of Hodge structure $\bH$ on $U$ of weight $w -n$ is a Hodge module of weight $w$. If $X$ is a compactification of $U$, then there is also a unique Hodge module $IC^{H}_{X}(L) \in HM_{X}(X,w)$ that extends $\bH,$ whose underlying perverse sheaf is the intersection complex $IC_{X}(L)$ twisted by $L$. By applying Theorem \ref{thm.1}, two of the main results of this paper are the following generalizations of Arapura's theorem \cite[Thm. 2]{arapura}.

\begin{thm}(Thm. \ref{thm3.5})
Suppose $U$ is a smooth irreducible quasi-projective variety of dimension $n$. Let $X$ be a compactification of $U$ and $S = X \backslash U$ with dimension $n_{S}$. If $\cM \in HM_{U}(U,w)$ corresponds to a polarized variation of Hodge structure, then the mixed Hodge structure on $H^{i-n}(U, DR(\cM))$ is pure of weight $w +i -n$ provided that $i \leq n- n_{S} -2.$ Furthermore,
$$H^{i-n}(U,DR(\cM)) \cong \bigoplus_{p \in \Z} H^{i - n}(U, Gr^{F}_{p}DR(\cM))$$ 
and
$$\dim H^{i -n}(U, Gr^{F}_{-p}DR(\cM)) = \dim H^{ i -n}(U, Gr^{F}_{p +n -w -i}DR(\cM)).$$
\end{thm}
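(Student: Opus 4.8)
The plan is to transport everything to the compactification $X$ via $j_{+}\cM \in D^{b}MHM(X)$, where $j\colon U\hookrightarrow X$, and to split the argument into a local cohomology vanishing and a purity statement, combining them at the end through Saito's $E_1$-degeneration. Writing $a_X\colon X\to \mathrm{pt}$, one has $DR(j_{+}\cM)=Rj_{*}DR(\cM)$, so $H^{i-n}(U,DR(\cM))=H^{i-n}(X,DR(j_{+}\cM))$ compatibly with mixed Hodge structures; and since $j$ is an open immersion, $j^{*}Gr^{F}_{p}DR(j_{+}\cM)=Gr^{F}_{p}DR(\cM)$ on $U$, so the local cohomology triangle on $X$ gives exact sequences
\[
\cdots \to H^{k}_{S}\bigl(X,Gr^{F}_{p}DR(j_{+}\cM)\bigr)\to H^{k}\bigl(X,Gr^{F}_{p}DR(j_{+}\cM)\bigr)\to H^{k}\bigl(U,Gr^{F}_{p}DR(\cM)\bigr)\to\cdots .
\]
It therefore suffices to prove (a) $H^{k}_{S}(X,Gr^{F}_{p}DR(j_{+}\cM))=0$ for $k\le -n_{S}-1$ and all $p$, and (b) $H^{i-n}(X,DR(j_{+}\cM))$ is a pure Hodge structure of weight $w+i-n$ for $i\le n-n_{S}-2$.

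For (a) the key is to feed the \emph{dual} object into Theorem \ref{thm.1}. By Grothendieck--Serre duality on the proper $X$, the compatibility of the graded de Rham functor with duality, and $\mathbb{D}(j_{+}\cM)=j_{!}\mathbb{D}\cM$, one gets $R\mathcal{H}om_{\cO_X}(Gr^{F}_{p}DR(j_{+}\cM),\omega^{\bullet}_{X})\cong Gr^{F}_{-p}DR(j_{!}\mathbb{D}\cM)$, so by biduality $H^{k}_{S}(X,Gr^{F}_{p}DR(j_{+}\cM))\cong Ext^{k}_{S}(Gr^{F}_{-p}DR(j_{!}\mathbb{D}\cM),\omega^{\bullet}_{X})$. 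Now $\mathbb{D}\cM$ is again a polarizable Hodge module on $U$, and $j_{!}\mathbb{D}\cM=\mathbb{D}(j_{+}\cM)$ has perverse cohomology in degrees $\le 0$; hence $\cF:=Gr^{F}_{-p}DR(j_{!}\mathbb{D}\cM)\in D^{b}_{coh}(\cO_X)$ has cohomology sheaves in degrees $\le 0$, so $R^{i}(\mathrm{id}_X)_{*}\cF=0$ for $i\ge 1$. Applying Theorem \ref{thm.1} with $f=\mathrm{id}_X$, $E=S$, $q=1$ and $\cV=\cO_X$ then gives $Ext^{k}_{S}(\cF,\omega^{\bullet}_{X})=0$ for $k\le -n_{S}-1$, which is (a). Applying the theorem directly to $Gr^{F}_{p}DR(j_{+}\cM)$, whose amplitude can extend up to $\dim S$, would only yield a weaker range, so routing through the dual is what produces the sharp bound.

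For (b), the canonical morphism $IC^{H}_{X}(L)=j_{!*}\cM\to j_{+}\cM$ in $D^{b}MHM(X)$ is an isomorphism over $U$, so its cone $C$ is supported on $S$, and $C$ has perverse cohomology in degrees $\ge 0$ (the intermediate extension injects into ${}^{p}H^{0}(j_{+}\cM)$, and the higher ${}^{p}H^{k}(j_{+}\cM)$ are already supported on $S$). Since $C$ is supported on $S$ with $\dim S=n_{S}$, the complex $DR(C)$ satisfies $H^{k}(X,DR(C))=0$ for $k\le -n_{S}-1$, so $j_{!*}\cM\to j_{+}\cM$ induces an isomorphism of mixed Hodge structures $\IH^{i}(X,L)=H^{i-n}(X,DR(j_{!*}\cM))\xrightarrow{\ \sim\ }H^{i-n}(X,DR(j_{+}\cM))$ for $i\le n-n_{S}-1$. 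As $j_{!*}\cM$ is pure of weight $w$ and $a_X$ is proper, $(a_X)_{+}(j_{!*}\cM)$ is pure, hence $\IH^{i}(X,L)$ is pure of weight $w+i-n$; this is (b).

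Finally I assemble, for $i\le n-n_{S}-2$. By Saito's $E_1$-degeneration over the proper $X$, $\dim H^{i-n}(U,DR(\cM))=\sum_{p}\dim H^{i-n}(X,Gr^{F}_{p}DR(j_{+}\cM))$, which by (a) and the triangle equals $\sum_{p}\dim H^{i-n}(U,Gr^{F}_{p}DR(\cM))$; since the Hodge--de Rham spectral sequence of $\cM$ over $U$ always gives $\dim H^{i-n}(U,DR(\cM))\le\sum_{p}\dim H^{i-n}(U,Gr^{F}_{p}DR(\cM))$, equality forces its $E_1$-degeneration in this total degree, yielding $H^{i-n}(U,DR(\cM))\cong\bigoplus_{p}H^{i-n}(U,Gr^{F}_{p}DR(\cM))$. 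For the last identity, $E_1$-degeneration also gives $H^{i-n}(X,Gr^{F}_{q}DR(j_{+}\cM))=Gr_{F}^{-q}\,H^{i-n}(U,DR(\cM))$, so combining (a) with the Hodge symmetry $\dim Gr_{F}^{a}V=\dim Gr_{F}^{(w+i-n)-a}V$ of the pure Hodge structure $V=H^{i-n}(U,DR(\cM))$ from (b) gives $\dim H^{i-n}(U,Gr^{F}_{-p}DR(\cM))=\dim H^{i-n}(U,Gr^{F}_{p+n-w-i}DR(\cM))$. The main obstacle is precisely the duality bookkeeping in step (a): one must keep Saito's conventions straight — the compatibility of $Gr^{F}DR$ with $\mathbb{D}$ and with $j_{!}$, the effect of Verdier duality and Tate twist on the Hodge index, and the exact cohomological amplitude of $DR$ of a complex of mixed Hodge modules — since it is the nonpositivity of the amplitude of $Gr^{F}_{-p}DR(j_{!}\mathbb{D}\cM)$ that lets Theorem \ref{thm.1} run with $q=1$ and deliver the stated range.
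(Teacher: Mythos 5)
Your proof is correct, but it reaches the two main ingredients by routes that genuinely differ from the paper's. For the vanishing of $H^{k}_{S}(X, Gr^{F}_{p}DR(j_{+}\cM))$, the paper works on a strong log resolution $f\colon Y\to X$: it first proves $R^{i}f_{*}Gr^{F}_{p}DR(\cM(!E))=0$ for $i>0$ (Corollary \ref{cor2.3}) as a consequence of Saito's Kodaira vanishing theorem plus Serre vanishing and global generation, and only then feeds the dual complex into Theorem \ref{thm1.5} (Proposition \ref{prop2.5}, Corollary \ref{cor2.7}). You instead stay on $X$, take $f=\mathrm{id}_{X}$, and justify the hypothesis $q=1$ of Theorem \ref{thm1.5} purely from the fact that $\D(j_{+}\cM)=j_{!}\D\cM$ has perverse amplitude $\leq 0$, so that $Gr^{F}_{-p}DR(j_{!}\D\cM)$ has coherent cohomology in degrees $\leq 0$; this is valid (it amounts to Lemma \ref{lemma2.1} combined with the cohomological amplitude of $Gr^{F}DR$ of a single mixed Hodge module) and bypasses both the resolution and the Kodaira--Saito input. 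For purity, the paper argues by induction on dimension via general hyperplane sections (Propositions \ref{prop3.2} and \ref{prop3.4}), using the vanishing theorem to make restriction to $U\cap D$ injective in the stated range; you instead compare with $IC^{H}_{X}(L)$, noting that the cone of $j_{!*}\cM\to j_{+}\cM$ is supported on $S$ with perverse amplitude $\geq 0$, hence has hypercohomology vanishing in degrees $\leq -n_{S}-1$, so that $H^{i-n}(U,DR(\cM))\cong \IH^{i}(X,L)$ is pure by Saito's direct image theorem. This is precisely the mechanism the paper deploys later in Theorem \ref{Hodge}, so you in effect prove that half of Theorem \ref{Hodge} first and deduce Theorem \ref{thm3.5} from it, reversing the paper's logical order (and obtaining purity in the slightly larger range $i\leq n-n_{S}-1$). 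The concluding bookkeeping --- strictness over the proper $X$, comparison of the two spectral sequences on $U$, and Hodge symmetry --- coincides with Proposition \ref{prop3.3}. The one point you should make explicit, as the paper does in the remark following Proposition \ref{prop3.3} and in Section \ref{GAGA-type}, is the GAGA-type identification of the analytic groups $H^{i-n}(U^{an},Gr^{F}_{p}DR(\cM^{an}))$ with their algebraic counterparts, which is needed when the two spectral sequences over the quasi-projective $U$ are compared.
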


\begin{thm}(Thm. \ref{Hodge})\label{thm.3}
    Suppose $U$ is a smooth irreducible quasi-projective variety of dimension $n$. Let $X$ be a compactification of $U$ and $S = X \backslash U$ with dimension $n_{S}$. Let $\bH= ((H,F^{\bullet}),L)$ be a polarized variation of Hodge structure of weight $w-n$ on $U$, and $\cM \in HM_{U}(U,w)$ the corresponding Hodge module. Let $IC^{H}_{X}(L) \in HM_{X}(X,w)$ be the unique Hodge module on $X$ that extends $\bH,$  whose underlying perverse sheaf is the intersection complex $IC_{X}(L)$ twisted by $L$. The natural map   
     $$H^{i-n}(X,IC_{X}(L)) \rightarrow H^{i}(U,L)$$ 
     is an isomorphism of pure $\mathbb{Q}$-Hodge structures of weight $w +i -n$ whenever $i \leq n- n_{S} -1.$ Furthermore, if $i \leq n -n_{S} -2,$ then the natural map
     $$H^{i-n}(X, Gr^{F}_{-p}DR(IC^{H}_{X}(L))) \rightarrow H^{i-n}(U, Gr^{F}_{-p}DR(\cM))$$
     is an isomorphism. 
 \end{thm}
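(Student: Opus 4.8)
The plan is to compare $IC^{H}_{X}(L)$ with the full direct image $j_{+}\cM \in D^{b}MHM(X)$, where $j : U \hookrightarrow X$ is the open inclusion, and to read both assertions off the perverse $t$-structure together with Saito's purity and degeneration theorems and Theorem~\ref{thm3.5}. The underlying $\Q$-complex of $j_{+}\cM = j_{*}\cM$ is $(Rj_{*}L)[n]$, so $H^{i-n}(X, j_{+}\cM) = H^{i}(U,L) = H^{i-n}(U, DR(\cM))$ as $\Q$-mixed Hodge structures, and all the maps in the statement factor through $j_{+}\cM$. Since $j$ is an open immersion, $j_{+}=j_{*}$ is left $t$-exact, so ${}^{p}H^{k}(j_{+}\cM)=0$ for $k<0$; put $\cN_{0} := {}^{p}H^{0}(j_{+}\cM)$, giving a distinguished triangle $\cN_{0}\to j_{+}\cM\to \mathcal{K}\xrightarrow{+1}$ with $\mathcal{K} = {}^{p}\tau_{\geq 1}(j_{+}\cM)$, and $j^{*}\mathcal{K}=0$, so $\mathcal{K}$ is supported on $S$ and lies in ${}^{p}D^{\geq 1}(S)$. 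By definition $IC^{H}_{X}(L)=j_{!*}\cM$ is a sub-object of $\cN_{0}$, and the quotient $\cC := \cN_{0}/IC^{H}_{X}(L)$ is a mixed Hodge module supported on $S$ (its restriction to $U$ vanishes). Applying $H^{\bullet}(X,-)$ to $IC^{H}_{X}(L)\hookrightarrow\cN_{0}\to j_{+}\cM$ recovers the restriction map $H^{i-n}(X,IC_{X}(L))\to H^{i}(U,L)$ of the statement.

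First I would prove the first isomorphism. Since $\cC$ is a Hodge module supported on the $n_{S}$-dimensional $S$, its underlying complex is perverse on $S$, so $H^{j}(X,\cC)=0$ for $j<-n_{S}$; likewise $\mathcal{K}\in{}^{p}D^{\geq 1}(S)$ gives $H^{j}(X,\mathcal{K})=0$ for $j<1-n_{S}$. A chase in the two long exact sequences then shows that $H^{i-n}(X,IC_{X}(L))\to H^{i-n}(X,\cN_{0})$ is an isomorphism for $i\leq n-n_{S}-1$ and $H^{i-n}(X,\cN_{0})\to H^{i-n}(X,j_{+}\cM)$ is an isomorphism for $i\leq n-n_{S}$, so the restriction map $H^{i-n}(X,IC_{X}(L))\to H^{i}(U,L)$ is an isomorphism once $i\leq n-n_{S}-1$. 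It is induced by a morphism in $D^{b}MHM(X)$, hence a morphism of mixed Hodge structures; and since $X$ is proper and $IC^{H}_{X}(L)\in HM_{X}(X,w)$ is pure of weight $w$, Saito's decomposition theorem makes $H^{i-n}(X,IC^{H}_{X}(L))$ pure of weight $w+i-n$. Therefore the map is an isomorphism of pure $\Q$-Hodge structures of weight $w+i-n$, and in particular the a priori mixed Hodge structure on $H^{i}(U,L)$ is pure of that weight. (If $X$ is merely proper one may first reduce to the projective case before invoking Saito's theorems.)

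For the second isomorphism I would exploit that the isomorphism $H^{i-n}(X,IC^{H}_{X}(L))\xrightarrow{\sim}H^{i-n}(X,j_{+}\cM)=H^{i-n}(U,DR(\cM))$ just produced is \emph{strictly} compatible with the Hodge filtrations — a morphism in $D^{b}MHM$ induces $F$-strict maps on hypercohomology — hence is a filtered isomorphism, so it induces an isomorphism on every $Gr^{F}_{-p}$. On the source, $E_{1}$-degeneration of the Hodge--de Rham spectral sequence for the pure Hodge module $IC^{H}_{X}(L)$ on the proper variety $X$ identifies $Gr^{F}_{-p}H^{i-n}(X,IC^{H}_{X}(L))$ with $H^{i-n}(X,Gr^{F}_{-p}DR(IC^{H}_{X}(L)))$; on the target, Theorem~\ref{thm3.5} identifies $Gr^{F}_{-p}H^{i-n}(U,DR(\cM))$ with $H^{i-n}(U,Gr^{F}_{-p}DR(\cM))$ for $i\leq n-n_{S}-2$. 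Composing these three identifications, and checking they are compatible with the natural maps, gives the desired isomorphism $H^{i-n}(X,Gr^{F}_{-p}DR(IC^{H}_{X}(L)))\xrightarrow{\sim}H^{i-n}(U,Gr^{F}_{-p}DR(\cM))$ for $i\leq n-n_{S}-2$, the numerical bound being inherited from Theorem~\ref{thm3.5}. (Alternatively one can bypass Theorem~\ref{thm3.5} and repeat the argument of the first part after applying the exact functor $Gr^{F}_{-p}DR$ to the two triangles above, using that $Gr^{F}_{-p}DR$ of a Hodge module supported on $S$ has coherent cohomology only in degrees $[-n_{S},0]$; this is the route in which Theorem~\ref{thm.1}, through vanishing of local cohomology on a log resolution, supplies the key input, and it is precisely what is already packaged inside Theorem~\ref{thm3.5}.)

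The $t$-structure bookkeeping and the Artin-type vanishing over $S$ are routine; the essential content is the Hodge-theoretic input, namely $F$-strictness of hypercohomology together with $E_{1}$-degeneration for $IC^{H}_{X}(L)$ on proper $X$ (Saito), and on the $U$-side the decomposition of Theorem~\ref{thm3.5} (which itself rests on the local-cohomology vanishing of Theorem~\ref{thm.1}). I expect the main care to be needed in verifying that each comparison isomorphism produced is the natural map of the statement, so that no functoriality is lost when descending from $D^{b}MHM(X)$ to $U$, and in confirming that the degeneration and strictness statements invoked are legitimately available in precisely the ranges $i\leq n-n_{S}-1$ (resp. $i\leq n-n_{S}-2$), where $j_{+}\cM$ behaves as though it were pure.
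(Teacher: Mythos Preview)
Your proposal is correct and follows essentially the same route as the paper. The paper uses the single triangle $i_{S+}i^{!}_{S}IC^{H}_{X}(L)\to IC^{H}_{X}(L)\to j_{+}\cM\xrightarrow{+1}$ and outsources the cohomological vanishing over $S$ to a citation, whereas you unpack the same comparison through two triangles via $\cN_{0}={}^{p}H^{0}(j_{+}\cM)$; your cone $\cC\oplus\mathcal{K}[\text{shift}]$ is precisely $i_{S+}i^{!}_{S}IC^{H}_{X}(L)[1]$, so the arguments coincide. For the second claim both you and the paper pass to graded pieces via $E_{1}$-degeneration on the proper side and invoke Theorem~\ref{thm3.5} (really Proposition~\ref{prop3.3}) on the $U$-side.
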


The second objective of this paper is to show local versions of the previously stated results. Furthermore, most results hold when considering varieties with the associated analytic structure. First, we show there is a local version of Theorem \ref{thm.1}. If we let $\cV =\cL$ in Theorem \ref{thm.1}, where $\cL$ is a very ample sheaf on $X$, then  there is a spectral sequence
$$E^{m,p}_{2}= H^{m}(X, R^{p}f_{\ast}\bR \cH om_{E}(\cF, \omega^{\bullet}_{Y}) \otimes \cL^{N}) \Rightarrow Ext^{p +m}_{E}(\cF \otimes f^{\ast}\cL^{-N}, \omega^{\bullet}_{Y}).$$
By the results of Siu \cite{Siu}, we can prove that the sheaves $R^{p}f_{\ast}\bR \cH om_{E}(\cF, \omega^{\bullet}_{Y})$ are coherent in certain ranges when given specific assumptions on $\cF$. From the result of Theorem \ref{thm.1},  we show the following theorem holds.
\begin{nota}\label{nota1.8}
Given a complex of sheaves $\cF$, we have the truncations
$$\sigma_{>n}\cF: \cdots \rightarrow 0 \rightarrow 0 \rightarrow \cF^{n+1} \rightarrow \cF^{n+2} \rightarrow \cdots$$
$$\sigma_{\leq n}\cF: \cdots \rightarrow \cF^{n-1} \rightarrow \cF^{n} \rightarrow 0 \rightarrow 0 \rightarrow \cdots$$
$$\tau^{>n}\cF: \cdots  \rightarrow 0 \rightarrow 0 \rightarrow \im d^{n} \rightarrow \cF^{n+1} \rightarrow \cdots$$
$$\tau^{\leq n}\cF: \cdots \rightarrow \cF^{n-1} \rightarrow \ker d^{n} \rightarrow 0  \rightarrow 0 \rightarrow \cdots.$$
\end{nota}

\begin{thm}\label{thm.4}(Thm. \ref{prop1.9}) Suppose $X$ is an equidimensional projective variety of dimension $n,$ and $f: Y \rightarrow X$ is a proper birational map.  Let $S \subset X$ be a closed subvariety of dimension $n_{S}$ such that $U = X \backslash S$ is smooth, and $S$ does not contain any irreducible component of $X.$  Let  $E = f^{-1}(S)_{red}$ and  $V = Y \backslash E,$ and assume the induced map $f|_{V}:V \rightarrow U$ is an isomorphism. Let $\cF \in D^{b}_{coh}(\cO_Y)$ and suppose $\cF$ is quasi-isomorphic to a complex $\tcF$ with the following properties: 
\begin{enumerate}
\item  $\sigma_{\leq -n -1}\tcF = 0$ and there exists $l \in [0 ,n]$ such that $ \sigma_{> -n+l}\tcF = 0$.\\

\item There exists $q \geq - n + l +1$ such that $R^{i}f_{\ast}\tcF= 0$ for all $i \geq q$.\\

\item  The sheaves $\tcF^{j}|_{V}$ are locally free on $V$ for all $j$.
\end{enumerate} 
Then  
$$R^{p} f_{\ast}\bR \cH om_{E}(\cF, \omega^{\bullet}_{Y}) = 0 \quad \text{for $p \leq -n_{S} - q$}.$$
\end{thm}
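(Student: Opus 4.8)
The plan is to reduce the statement to Theorem \ref{thm1.5} by a twisting/spectral sequence argument on $X$, the genuinely new ingredient being a coherence statement for the sheaves $R^{p}f_{\ast}\bR\cH om_{E}(\cF,\omega^{\bullet}_{Y})$ that we extract from Siu's theory of analytic sheaves of local cohomology \cite{Siu}.

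First I would transport the problem to $X$. Since $V=f^{-1}(U)$, open base change gives $Rf_{\ast}\bR\Gamma_{E}\cong \bR\Gamma_{S}Rf_{\ast}$, and Grothendieck duality for the proper map $f$, together with $\omega^{\bullet}_{Y}=f^{!}\omega^{\bullet}_{X}$, yields
$$Rf_{\ast}\bR\cH om_{E}(\cF,\omega^{\bullet}_{Y})\;\cong\;\bR\Gamma_{S}\,Rf_{\ast}\bR\cH om_{\cO_{Y}}\bigl(\cF,f^{!}\omega^{\bullet}_{X}\bigr)\;\cong\;\bR\Gamma_{S}\,\bR\cH om_{\cO_{X}}\bigl(\cG^{\bullet},\omega^{\bullet}_{X}\bigr),\qquad \cG^{\bullet}:=Rf_{\ast}\cF=Rf_{\ast}\tcF.$$
Passing to cohomology sheaves, $R^{p}f_{\ast}\bR\cH om_{E}(\cF,\omega^{\bullet}_{Y})=\cH^{p}_{S}\bigl(\bR\cH om_{\cO_{X}}(\cG^{\bullet},\omega^{\bullet}_{X})\bigr)$ is the $p$-th local cohomology sheaf along $S$ of a bounded coherent complex on $X$. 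By hypotheses (1) and (2) the complex $\cG^{\bullet}$ has coherent cohomology concentrated in degrees $[-n,q-1]$ — the lower bound because $Rf_{\ast}$ does not decrease cohomological degree and $\tcF$ starts in degree $-n$, the upper bound being (2) — and by (3), together with the hypothesis that $f|_{V}\colon V\to U$ is an isomorphism onto the smooth $n$-dimensional variety $U$, the restriction $\cG^{\bullet}|_{U}$ is a bounded complex of locally free $\cO_{U}$-modules, so $\bR\cH om_{\cO_{X}}(\cG^{\bullet},\omega^{\bullet}_{X})$ is perfect on $U=X\setminus S$.

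The core of the argument is then the claim that $\mathcal{D}^{\bullet}:=\bR\cH om_{\cO_{X}}(\cG^{\bullet},\omega^{\bullet}_{X})$ has coherent local cohomology sheaves $\cH^{p}_{S}(\mathcal{D}^{\bullet})$ for all $p\le -n_{S}-q$. I would reduce to the case of a single coherent sheaf using the stupid truncations $\sigma_{\leq k}$ of a representative of $\mathcal{D}^{\bullet}$ and the long exact sequences of local cohomology they produce, the amplitude bounds on $\cG^{\bullet}$ and $\omega^{\bullet}_{X}$ together with the normalization $q\ge -n+l+1$ being exactly what confines the truncation indices to the window where Siu's criterion is effective. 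Siu's theorem \cite{Siu} identifies the range in which the local cohomology sheaves of a coherent sheaf along $S$ are coherent in terms of a depth bound along $S$, and the required bound is furnished by biduality for the dualizing complex: at each point $x\in S$,
$$\mathrm{depth}_{\mathfrak{m}_{x}}\bigl(\mathcal{D}^{\bullet}_{x}\bigr)=-\sup\{\,i:\cH^{i}(\bR\cH om_{\cO_{X,x}}(\mathcal{D}^{\bullet}_{x},\omega^{\bullet}_{X,x}))\ne 0\,\}=-\sup\{\,i:\cH^{i}(\cG^{\bullet}_{x})\ne 0\,\}\;\ge\;1-q,$$
which, since $\dim_{x}S\le n_{S}$, places the range $p\le -n_{S}-q$ inside Siu's range; perfectness of $\mathcal{D}^{\bullet}$ over $X\setminus S$ is what makes the whole discussion local along $S$. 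Making the numerics of Siu's theorem fit the bound $p\le -n_{S}-q$ precisely is the step I expect to be the most delicate, and it is where hypotheses (1)--(3) are all consumed.

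Granting the coherence, the conclusion follows by a standard twisting argument. Let $\cL$ be very ample on $X$. For every $N\ge 0$, Theorem \ref{thm1.5} applied to $\cF\otimes f^{\ast}\cL^{-N}$ — whose higher direct images vanish in degrees $\ge q$, since $R^{i}f_{\ast}(\cF\otimes f^{\ast}\cL^{-N})\cong R^{i}f_{\ast}\tcF\otimes\cL^{-N}=0$ for $i\ge q$ by the projection formula and (2) — gives $Ext^{p}_{E}(\cF\otimes f^{\ast}\cL^{-N},\omega^{\bullet}_{Y})=0$ for $p\le -n_{S}-q$. The projection formula identifies $Rf_{\ast}\bR\cH om_{E}(\cF\otimes f^{\ast}\cL^{-N},\omega^{\bullet}_{Y})$ with $Rf_{\ast}\bR\cH om_{E}(\cF,\omega^{\bullet}_{Y})\otimes\cL^{N}$, so the Leray spectral sequence takes the form
$$E^{m,p}_{2}=H^{m}\bigl(X,\,R^{p}f_{\ast}\bR\cH om_{E}(\cF,\omega^{\bullet}_{Y})\otimes\cL^{N}\bigr)\;\Longrightarrow\;Ext^{m+p}_{E}(\cF\otimes f^{\ast}\cL^{-N},\omega^{\bullet}_{Y}),$$
and its abutment vanishes in total degrees $\le -n_{S}-q$. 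The complex $\bR\cH om_{E}(\cF,\omega^{\bullet}_{Y})$ is bounded below, so there is a least integer $p_{0}$ with $R^{p_{0}}f_{\ast}\bR\cH om_{E}(\cF,\omega^{\bullet}_{Y})\ne 0$ (if any); suppose for contradiction $p_{0}\le -n_{S}-q$. The differentials into $E^{0,p_{0}}_{r}$ vanish because $H^{<0}(X,-)=0$, and those out of $E^{0,p_{0}}_{r}$ vanish by minimality of $p_{0}$, so $H^{0}\bigl(X,\,R^{p_{0}}f_{\ast}\bR\cH om_{E}(\cF,\omega^{\bullet}_{Y})\otimes\cL^{N}\bigr)=E^{0,p_{0}}_{\infty}$ is a subquotient of $Ext^{p_{0}}_{E}(\cF\otimes f^{\ast}\cL^{-N},\omega^{\bullet}_{Y})=0$. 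Taking $N\gg 0$, the coherent sheaf $R^{p_{0}}f_{\ast}\bR\cH om_{E}(\cF,\omega^{\bullet}_{Y})\otimes\cL^{N}$ is globally generated, hence vanishes; therefore $R^{p_{0}}f_{\ast}\bR\cH om_{E}(\cF,\omega^{\bullet}_{Y})=0$, a contradiction. Thus $R^{p}f_{\ast}\bR\cH om_{E}(\cF,\omega^{\bullet}_{Y})=0$ for all $p\le -n_{S}-q$.
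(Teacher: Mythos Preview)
Your overall strategy---establish coherence of $R^{p}f_{\ast}\bR\cH om_{E}(\cF,\omega^{\bullet}_{Y})$ in the range $p\le -n_{S}-q$, then twist by powers of a very ample $\cL$ and combine the Leray spectral sequence with Theorem~\ref{thm1.5}---is exactly the paper's two-step argument, and your Step~2 is essentially identical to the paper's Step~2.

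The gap is in Step~1. Your depth computation via biduality is correct as a statement about local cohomology at the \emph{maximal ideal} of a closed point $x\in S$: local duality gives $H^{i}_{\mathfrak{m}_{x}}(\cD^{\bullet}_{x})=0$ for $i<1-q$. But this is not what Siu's (or Grothendieck's) finiteness criterion asks for. Coherence of $\cH^{p}_{S}(\cD^{\bullet})$ is governed by the depth of the sheaf at points of $U=X\setminus S$, not at points of $S$; the stalk $\cH^{p}_{S}(\cD^{\bullet})_{x}$ is $H^{p}_{I_{S,x}}(\cD^{\bullet}_{x})$, and the vanishing of $H^{i}_{\mathfrak{m}_{x}}$ says nothing about its finite generation. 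Moreover, your proposed d\'evissage---stupid truncations of an arbitrary representative of $\cD^{\bullet}$ on the singular variety $X$---does not help: the individual terms of such a representative are just coherent sheaves on $X$ with no control on their depth along $U$, so Siu's criterion cannot be applied term by term.

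The paper avoids this by never working with $\cD^{\bullet}$ on $X$. Instead it passes to $U$: using the triangle $\bR f_{\ast}\bR\cH om_{E}(\cF,\omega^{\bullet}_{Y})\to \bR f_{\ast}\bR\cH om_{Y}(\cF,\omega^{\bullet}_{Y})\to \bR j_{\ast}\cG$, where $\cG:=f|_{V\ast}\bR\cH om_{V}(\cF|_{V},\omega^{\bullet}_{V})$, the coherence question becomes one about $R^{p}j_{\ast}\cG$. The point is that $\cG$ is \emph{naturally} a complex of locally free sheaves on the smooth variety $U$, sitting in degrees $[-l,0]$ (this is exactly where hypothesis~(3) is used). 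One then peels off one term $\cG^{-j}$ at a time via the stupid filtration on $\cG$, extends each locally free $\cG^{-j}$ to a coherent sheaf on $X$, and applies Siu's criterion to that single sheaf: locally free on smooth $U$ of dimension $n$ gives coherence of $\cH^{i}_{S}$ for $i<n-n_{S}$, and the inequality $q\ge -n+l+1$ is precisely what places the required range $p+j\le -n_{S}-q-1+j\le n-n_{S}-2$ inside this window. Your biduality observation can be salvaged to reach the same conclusion, but only after restricting to $U$ and using the termwise local freeness there, which is effectively the paper's route.
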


  Suppose $f: Y \rightarrow X$ is a strong log resolution of $S.$ That is, $f: Y \rightarrow X$ is a resolution of singularities such that the induced map $f: f^{-1}(U) \rightarrow U$ is an isomorphism and $E = f^{-1}(S)_{red}$ is a simple normal crossing divisor on $Y.$ Let $\bH= ((\cL, F^{\bullet}),L)$ be a polarized variation of Hodge structure on $V =f^{-1}(U)$ and let $\cM \in HM(V,w)$ be the corresponding Hodge module on $V.$ If $\rho: V \hookrightarrow Y$ is the natural map, then there is a mixed Hodge module $\cM(!E):= \rho_{!}\cM \in MHM(Y)$ such that the complex $Gr^{F}_{p}DR(\cM(!E)) \in D^{b}_{coh}(\cO_{Y})$ is quasi-isomorphic to a complex which satisfies the three conditions in Theorem \ref{thm.4} with $l = n$ and $q = 1.$ Hence we acquire the following proposition.

\begin{prop}\label{propext}(Prop. \ref{prop4.1})
With the same assumptions as above, if $\cM(*E):= \rho_{+}\cM,$ then the natural map
$$R^{i}f_{\ast}Gr^{F}_{-p}DR(\cM(\ast E)) \rightarrow R^{i}j_{\ast}f|_{V \ast}Gr^{F}_{-p }DR(\cM)$$

is an isomorphism for $i \leq -n_{S} -2$ and injective for $i = -n_{S} -1$. 
\end{prop}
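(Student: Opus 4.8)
The plan is to deduce Proposition \ref{prop4.1} from Theorem \ref{thm.4} (Thm. \ref{prop1.9}) together with the local cohomology triangle relating $\cM(\ast E)$, $\cM(!E)$ and the local cohomology supported on $E$. First I would recall that for the open embedding $\rho : V \hookrightarrow Y$ there is a distinguished triangle in $D^{b}MHM(Y)$ of the form
\[
\cM(!E) \longrightarrow \cM(\ast E) \longrightarrow \cQ \xrightarrow{+1},
\]
where $\cQ$ is a mixed Hodge module (complex) supported on $E$; concretely $\cQ$ computes the local cohomology $\bR\Gamma_{E}$ of $\cM(\ast E)$ along $E$, shifted appropriately. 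Applying $Gr^{F}_{-p}DR$ — which is exact and sends distinguished triangles to distinguished triangles in $D^{b}_{coh}(\cO_{Y})$ — and then $Rf_{\ast}$, we get a long exact sequence connecting $R^{i}f_{\ast}Gr^{F}_{-p}DR(\cM(!E))$, $R^{i}f_{\ast}Gr^{F}_{-p}DR(\cM(\ast E))$ and $R^{i}f_{\ast}Gr^{F}_{-p}DR(\cQ)$. Since $f|_{V}:V\to U$ is an isomorphism, over $U$ the module $\cM(\ast E)$ restricts to $\cM$ and $Rf_{\ast}$ of its de Rham graded pieces restricts to $Rj_{\ast}$ applied to $f|_{V\ast}Gr^{F}_{-p}DR(\cM)$; so to prove the statement it suffices to show that $R^{i}f_{\ast}Gr^{F}_{-p}DR(\cM(!E)) \to R^{i}f_{\ast}Gr^{F}_{-p}DR(\cM(\ast E))$ is an isomorphism for $i\le -n_{S}-2$ and injective for $i=-n_{S}-1$, and separately identify the $\ast$-pushforward with $Rj_{\ast}$.

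The key input is the vanishing $R^{p}f_{\ast}\bR\cH om_{E}(\cF,\omega^{\bullet}_{Y})=0$ for $p\le -n_{S}-q$, applied with $\cF = Gr^{F}_{-p}DR(\cM(!E))$ and $q=1$. By the remark preceding the proposition, for a strong log resolution the complex $Gr^{F}_{-p}DR(\cM(!E))$ is quasi-isomorphic to a complex satisfying the three hypotheses of Theorem \ref{thm.4} with $l=n$ and $q=1$ — the point being that $\cM(!E)=\rho_{!}\cM$ has its filtered de Rham complex concentrated in the right degrees (bounded below by $-n$), that $R^{i}f_{\ast}$ of it vanishes for $i\ge 1$ because $\rho_{!}$ produces no higher direct images past the expected range, and that over $V$ the de Rham graded pieces are locally free since $\bH$ is a variation of Hodge structure. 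Hence Theorem \ref{thm.4} gives $R^{p}f_{\ast}\bR\cH om_{E}(Gr^{F}_{-p}DR(\cM(!E)),\omega^{\bullet}_{Y})=0$ for $p\le -n_{S}-1$. Then I would invoke local duality / the local cohomology triangle once more: the cone $\cQ$ of $\cM(!E)\to\cM(\ast E)$ has $Gr^{F}_{-p}DR(\cQ)$ dual (up to shift) to $\bR\cH om_{E}(Gr^{F}_{-p}DR(\cM(!E)),\omega^{\bullet}_{Y})$ restricted to $E$, so that $R^{i}f_{\ast}Gr^{F}_{-p}DR(\cQ)=0$ in the range $i\le -n_{S}-1$ and the long exact sequence yields the asserted isomorphism for $i\le -n_{S}-2$ and injectivity at $i=-n_{S}-1$.

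Finally I would identify $R^{i}f_{\ast}Gr^{F}_{-p}DR(\cM(\ast E))$ with $R^{i}j_{\ast}f|_{V\ast}Gr^{F}_{-p}DR(\cM)$. This is the compatibility of the de Rham functor with pushforward: $\rho_{+}$ on mixed Hodge modules has underlying filtered $\cD$-module pushforward whose associated graded de Rham complex computes $Rj_{\ast}$ of the de Rham complex of $\cM$ on $V$; combined with $f|_{V}$ being an isomorphism, base change along the square relating $\rho$, $f$, $j$ and $f|_{V}$ gives the claimed natural map and its compatibility with the maps above. The main obstacle I anticipate is precisely the bookkeeping in this last step — making the base-change/compatibility isomorphism $Rf_{\ast}\circ Gr^{F}_{-p}DR\circ \rho_{+} \cong Rj_{\ast}\circ f|_{V\ast}\circ Gr^{F}_{-p}DR$ canonical and checking it is the map that appears in the long exact sequence — together with verifying carefully that the three hypotheses of Theorem \ref{thm.4} genuinely hold for $Gr^{F}_{-p}DR(\cM(!E))$ with the stated $l$ and $q$, since that verification is where the structure of the filtered de Rham complex of $\rho_{!}\cM$ on a simple normal crossing boundary is really used.
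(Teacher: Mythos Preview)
Your proposal has a genuine gap: you are using the wrong triangle, and the ``formal compatibility'' you invoke in the last paragraph is precisely the content of the proposition, not a formality.

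Concretely, the triangle $\cM(!E)\to\cM(\ast E)\to\cQ$ in $D^{b}MHM(Y)$ compares the two extensions of $\cM$; after $Gr^{F}_{-p}DR$ its third term $Gr^{F}_{-p}DR(\cQ)$ is a complex supported on $E$, not the open pushforward $\bR\rho_{\ast}\rho^{\ast}Gr^{F}_{-p}DR(\cM(\ast E))$. So this triangle does not produce the map $R^{i}f_{\ast}Gr^{F}_{-p}DR(\cM(\ast E))\to R^{i}j_{\ast}f|_{V\ast}Gr^{F}_{-p}DR(\cM)$ at all. You then try to recover that map by asserting that $Gr^{F}_{-p}DR(\rho_{+}\cM)\simeq \bR\rho_{\ast}Gr^{F}_{-p}DR(\cM)$; but $Gr^{F}DR$ commutes with \emph{proper} pushforward, not with open embeddings. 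Already for $\cM=\Q^{H}_{V}[n]$ one has $Gr^{F}_{-p}DR(\cM(\ast E))=\Omega^{p}_{Y}(\log E)[n-p]$, which is not $\bR\rho_{\ast}\Omega^{p}_{V}[n-p]$. The duality claim linking $Gr^{F}_{-p}DR(\cQ)$ to $\bR\cH om_{E}(Gr^{F}_{-p}DR(\cM(!E)),\omega^{\bullet}_{Y})$ is likewise unjustified: Saito's duality gives $\bR\cH om_{Y}(Gr^{F}_{-p}DR(\cM(!E)),\omega^{\bullet}_{Y})\simeq Gr^{F}_{p}DR(\D(\cM)(\ast E))$, and its local cohomology along $E$ is not $Gr^{F}_{-p}DR(\cQ)$.

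The paper's route avoids this confusion. One takes $\cF=Gr^{F}_{p}DR(\D(\cM)(!E))$; by Saito's duality (Proposition~\ref{dualprop}) one has $\bR\cH om_{Y}(\cF,\omega^{\bullet}_{Y})\simeq Gr^{F}_{-p}DR(\cM(\ast E))$ and, on $V$, $\bR\cH om_{V}(\cF|_{V},\omega^{\bullet}_{V})\simeq Gr^{F}_{-p}DR(\cM)$. Now the \emph{local cohomology} triangle
\[
\bR f_{\ast}\bR\cH om_{E}(\cF,\omega^{\bullet}_{Y})\to \bR f_{\ast}\bR\cH om_{Y}(\cF,\omega^{\bullet}_{Y})\to \bR j_{\ast}f|_{V\ast}\bR\cH om_{V}(\cF|_{V},\omega^{\bullet}_{V})\xrightarrow{+1}
\]
(this is Corollary~\ref{cor1.10}) already has as its second map exactly the map in the statement. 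Theorem~\ref{prop1.9} applied to this $\cF$ with $q=1$ kills the first term in degrees $\le -n_{S}-1$, and the proposition follows. Your verification that $\cF$ satisfies the hypotheses of Theorem~\ref{prop1.9} is correct and is indeed the substantive input; what is missing is that one must dualize \emph{before} forming the local cohomology triangle, so that the open--closed decomposition is taken for the $\cO_{Y}$-complex $Gr^{F}_{-p}DR(\cM(\ast E))$ itself rather than at the level of Hodge modules.
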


In particular, if $\cM$ is the trivial Hodge module $\Q^{H}_{V}[n]$ in the previous proposition, then with the theory of Hodge modules, we have the following results for projective varieties.

\begin{cor}(Cor. \ref{cor4.2})
Let $X$ be an equidimensional projective variety of dimension $n.$  Let $S \subset X$ be a closed subvariety of dimension $n_{S}$ such that $U = X \backslash S$ is smooth, and $S$ does not contain any irreducible component of $X.$ If $f:Y \rightarrow X$ is a strong log resolution of $S$ and $j:U \hookrightarrow X$ is the natural inclusion, then the natural map
$$R^{i}f_{\ast}\Omega^{p}_{Y}(\log E) \rightarrow R^{i}j_{\ast}\Omega^{p}_{U}$$
is an isomorphism for $p +i \leq n - n_{S} -2$ and injective for $p +i = n- n_{S} - 1$. 
\end{cor}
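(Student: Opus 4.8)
The plan is to deduce the corollary directly from Proposition~\ref{propext} by specializing to the trivial variation of Hodge structure. Take $\bH$ to be the constant polarized variation of Hodge structure of weight $0$ on $V = f^{-1}(U)$, so that the corresponding Hodge module is $\cM = \Q^{H}_{V}[n] \in HM(V, n)$, i.e.\ $w = n$. Since $f\colon Y \to X$ is a strong log resolution of $S$, all the hypotheses preceding Proposition~\ref{propext} are satisfied: $E = f^{-1}(S)_{red}$ is a simple normal crossing divisor, $f|_V\colon V \to U$ is an isomorphism, and $\rho\colon V \hookrightarrow Y$ is the natural open inclusion. Thus Proposition~\ref{propext} applies with $\cM(*E) = \rho_{+}\Q^{H}_{V}[n]$.

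The key computation is to identify the graded pieces of the de Rham complex appearing in Proposition~\ref{propext}. First, on the source side: for the trivial Hodge module extended across a simple normal crossing divisor, $\rho_{+}\Q^{H}_{V}[n]$ has underlying filtered $\cD$-module $(\cO_Y(*E), F)$, and a standard computation (essentially the fact that $\OmbX(\log E)$ computes $Rj_*\Q$, due to Deligne, reinterpreted in Saito's framework) gives
$$Gr^{F}_{-p} DR(\cM(*E)) \simeq \Omega^{p}_{Y}(\log E)[n - p]$$
up to a shift; concretely $Gr^F_{-p}DR$ picks out the single sheaf $\Omega^p_Y(\log E)$ placed in the appropriate cohomological degree. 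Second, on the target side: since $\cM = \Q^{H}_{V}[n]$ is the trivial Hodge module on the smooth variety $V$, $Gr^{F}_{-p}DR(\cM) \simeq \Omega^{p}_{V}[n-p]$, and because $f|_V\colon V \to U$ is an isomorphism we may replace $f|_{V*}Gr^F_{-p}DR(\cM)$ by $\Omega^p_U$ placed in the same degree. Tracking the cohomological shifts, the isomorphism
$$R^{i}f_{\ast}Gr^{F}_{-p}DR(\cM(*E)) \xrightarrow{\ \sim\ } R^{i}j_{\ast}f|_{V*}Gr^{F}_{-p}DR(\cM)$$
becomes, after reindexing $i \mapsto i' + n - p$, a map
$$R^{i'}f_{\ast}\Omega^{p}_{Y}(\log E) \longrightarrow R^{i'}j_{\ast}\Omega^{p}_{U},$$
and one checks this is the natural pullback-restriction map. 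The range $i \leq -n_S - 2$ (resp.\ $i = -n_S - 1$) of Proposition~\ref{propext} translates under this reindexing into $p + i' \leq n - n_S - 2$ (resp.\ $p + i' = n - n_S - 1$), which is exactly the asserted range.

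The main obstacle, and the step requiring genuine care, is the precise bookkeeping in the previous paragraph: one must verify that Saito's de Rham functor and its graded pieces, applied to $\rho_+\Q^H_V[n]$ and $\Q^H_V[n]$, literally recover the log-de Rham complex $\Omega^\bullet_Y(\log E)$ with the correct Hodge filtration $F_{-p}\cO_Y(*E)$ (so that $Gr^F$ has no higher terms and the cohomological placement of each $\Omega^p$ is unambiguous), and that the comparison morphism produced by functoriality of $DR$ and base change agrees with the classical pullback map on differential forms. This is where the weight normalization $w = n$ and the sign conventions on shifts must be handled consistently; everything else is a formal consequence of Proposition~\ref{propext}. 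Once the dictionary $Gr^F_{-p}DR \leftrightarrow \Omega^p_Y(\log E)[n-p]$ is in place, the corollary follows by substitution.
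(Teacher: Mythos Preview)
Your proposal is correct and follows essentially the same approach as the paper: specialize Proposition~\ref{propext} (Prop.~\ref{prop4.1}) to the trivial Hodge module $\cM = \Q^{H}_{V}[n]$, use the identifications $Gr^{F}_{-p}DR(\cM(\ast E)) \simeq \Omega^{p}_{Y}(\log E)[n-p]$ and $Gr^{F}_{-p}DR(\cM) \simeq \Omega^{p}_{V}[n-p]$, and reindex to obtain the stated range. The paper's proof is the same argument stated in two sentences; the identification of the graded de Rham complexes you flag as the ``main obstacle'' is precisely Saito's \cite[Prop.~3.11]{saito2}, already invoked earlier in the paper (e.g.\ in Cor.~\ref{cor2.4}).
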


\begin{cor}(Cor. \ref{IC cor 2})
    Let $X$ be an irreducible projective variety and $S \subset X$ be a proper closed subvariety of dimension $n_{S}$ such that $U = X \backslash S$ is smooth. Let $IC^{H}_{X} \in HM_{X}(X,n)$ denote the unique extension of the trivial Hodge module $\Q^{H}_{U}[n] \in HM_{U}(U,n).$ If $j:U \hookrightarrow X$ is the natural inclusion, then the natural map
    $$\cH^{i +p -n}(Gr^{F}_{-p}DR(IC^{H}_{X})) \rightarrow R^{i}j_{*} \Omega^{p}_{U}$$
    is an isomorphism for $p +i \leq n -n_{S} -2$ and injective for $p +i = n -n_{S} -1.$ Furthermore, if $g: Y' \rightarrow X$ is any resolution of singularities, then the natural map
    $$ R^{i}g_{*}\Omega^{p}_{Y'} \rightarrow R^{i}j_{*}\Omega^{p}_{U}$$
    is surjective for $p +i \leq n - n_{S} -2.$
\end{cor}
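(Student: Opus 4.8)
The plan is to derive this corollary from the preceding results by specializing the Hodge module to the trivial one and unwinding the de Rham complex. Recall that for the trivial Hodge module $\Q^H_U[n]$ on the smooth variety $U$, the associated filtered de Rham complex has graded pieces $Gr^F_{-p}DR(\Q^H_U[n]) \simeq \Omega^p_U[n-p]$; pushing forward by $j$ and using that $j$ is affine (so $j_+ = j_*$ at the level of derived pushforward commutes suitably with $Gr^F_p DR$), one gets that $\cH^{i+p-n}$ of $Gr^F_{-p}DR(j_+\Q^H_U[n])$ computes $R^i j_* \Omega^p_U$. So the first claim is really the statement that the natural map from $Gr^F_{-p}DR(IC^H_X)$ to (the relevant truncation/cohomology sheaf of) $Gr^F_{-p}DR(j_+\Q^H_U[n])$ induces an isomorphism on $\cH^{i+p-n}$ in the stated range.

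First I would set up a strong log resolution $f: Y \to X$ of $S$, which exists since $X$ is a projective variety; write $E = f^{-1}(S)_{red}$, a simple normal crossing divisor, $V = f^{-1}(U)$, and let $\rho: V \hookrightarrow Y$ be the inclusion. On $V$ take the trivial Hodge module $\Q^H_V[n]$, whose extension $\cM(*E) = \rho_+\Q^H_V[n]$ has $Gr^F_{-p}DR(\cM(*E)) \simeq \Omega^p_Y(\log E)[n-p]$ by Saito's computation of the de Rham complex of a mixed Hodge module with normal crossing pole structure. Applying Proposition \ref{propext} (Prop. \ref{prop4.1}) with $\cM = \Q^H_V[n]$ gives that $R^i f_* \Omega^p_Y(\log E) \to R^i j_* \Omega^p_U$ is an isomorphism for $p + i \le n - n_S - 2$ and injective for $p+i = n - n_S - 1$: this is exactly Corollary \ref{cor4.2}, which I would invoke directly. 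Then I would compare $f_+ \Q^H_Y[n]$ with $IC^H_X$: since $IC^H_X$ is a direct summand of $f_+\Q^H_Y[n]$ in $D^b MHM(X)$ by the decomposition theorem (Saito), and more precisely $IC^H_X$ is the summand supported on the "top" piece, one gets that $\cH^{i+p-n}(Gr^F_{-p}DR(IC^H_X))$ is a direct summand of $\cH^{i+p-n}(Gr^F_{-p}DR(f_+\Q^H_Y[n])) = R^i f_* \Omega^p_Y(\log E)$ — wait, more carefully, $f_+\Q^H_Y[n]$ is computed from $Y$, not from the log resolution's open part, so I need the factorization $j_+\Q^H_U[n] = $ (summand of) $f_+ \cM(*E)$ composed with the open inclusion, giving natural maps $Gr^F_{-p}DR(IC^H_X) \to Gr^F_{-p}DR(j_+\Q^H_U[n])$ whose cohomology sheaves are $R^i j_* \Omega^p_U$. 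Chasing the resulting commutative square of natural maps and applying Corollary \ref{cor4.2} yields the isomorphism for $p+i \le n - n_S - 2$ and injectivity for $p + i = n - n_S -1$.

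For the final assertion, let $g: Y' \to X$ be an \emph{arbitrary} resolution of singularities. I would first reduce to the case where $g$ dominates the strong log resolution $f$ (by resolving the fiber product, using that any two resolutions are dominated by a common one and that the relevant $R^i g_* \Omega^p$ only depends on $Y'$ birationally over $X$ in the appropriate sense — here one uses that for a resolution $h: Y'' \to Y'$ with $Y'$ smooth, $Rh_* \Omega^p_{Y''} \to \Omega^p_{Y'}$ admits a splitting, so $R^i g'_* \Omega^p_{Y''} \to R^i g_* \Omega^p_{Y'}$ is surjective when $g' = g \circ h$; this lets me replace $Y'$ by a resolution dominating $f$). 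Once $Y'$ dominates $Y$ via $h: Y' \to Y$, the natural map $R h_* \Omega^p_{Y'} \to \Omega^p_Y(\log E)$ — or rather the composite $R^i g_* \Omega^p_{Y'} \to R^i f_* \Omega^p_Y(\log E) \to R^i j_* \Omega^p_U$ — factors the map in question, and the second arrow is an isomorphism in the range $p + i \le n - n_S - 2$ by Corollary \ref{cor4.2}, while the first arrow is surjective by the splitting property of $Rh_*\Omega^p$ on the open set $V$ where $h$ is an isomorphism combined with a weight or strictness argument controlling the cokernel's support. Hence the composite is surjective in that range.

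The main obstacle I expect is the last point: making precise why $R^i g_* \Omega^p_{Y'} \to R^i j_* \Omega^p_U$ is surjective for a general resolution not dominating the chosen log resolution, i.e. the independence-of-resolution step. The clean way is to invoke that $Rg_* \Omega^p_{Y'}$ is, up to the relevant truncation, computed by the Hodge module $IC^H_X$ (Saito's results identify $\cH^{i+p-n}(Gr^F_{-p}DR(IC^H_X))$ as the "smallest" such pushforward, receiving a surjection from $R^i g_* \Omega^p_{Y'}$ for every resolution $g$ via the decomposition theorem applied to $g_+\Q^H_{Y'}[n] \to IC^H_X$ being a split surjection in the derived category of mixed Hodge modules). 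Then surjectivity of $R^i g_* \Omega^p_{Y'} \to \cH^{i+p-n}(Gr^F_{-p}DR(IC^H_X))$ is formal, and composing with the already-established isomorphism $\cH^{i+p-n}(Gr^F_{-p}DR(IC^H_X)) \xrightarrow{\sim} R^i j_* \Omega^p_U$ for $p+i \le n-n_S-2$ finishes the proof. I would need to be careful that the relevant cohomology sheaf of $Gr^F_{-p}DR(g_+\Q^H_{Y'}[n])$ is indeed $R^i g_* \Omega^p_{Y'}$ (true since $Y'$ is smooth and $g$ proper, by base change and the computation of $Gr^F DR$ for the trivial Hodge module) and that passing to the summand $IC^H_X$ is compatible with taking $\cH^{i+p-n}$ of $Gr^F_{-p}DR$, which holds because $Gr^F_{-p}DR$ is an exact functor to $D^b_{coh}(X)$ and direct summands are preserved.
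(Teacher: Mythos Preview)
Your argument for the second assertion is essentially the paper's: once the first isomorphism is in hand, the decomposition theorem gives $IC^{H}_{X}$ as a direct summand of $g_{+}\Q^{H}_{Y'}[n]$, so $R^{i}g_{*}\Omega^{p}_{Y'}\twoheadrightarrow \cH^{i+p-n}(Gr^{F}_{-p}DR(IC^{H}_{X}))$, and composing with the first part finishes. The domination detour is unnecessary but harmless.

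The gap is in the first part. You correctly reduce to showing that the map
\[
\cH^{i+p-n}\bigl(Gr^{F}_{-p}DR(IC^{H}_{X})\bigr)\longrightarrow \cH^{i+p-n}\bigl(Gr^{F}_{-p}DR(j_{+}\Q^{H}_{U}[n])\bigr)
\]
is an isomorphism (resp.\ injection) in the stated range, and Corollary~\ref{cor4.2} does identify the right-hand side with $R^{i}j_{*}\Omega^{p}_{U}$ there, since $Gr^{F}_{-p}DR(j_{+}\Q^{H}_{U}[n])\simeq \bR f_{*}\Omega^{p}_{Y}(\log E)[n-p]$. But you never explain why this \emph{map} is an isomorphism. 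The decomposition theorem exhibits $IC^{H}_{X}$ as a summand of $f_{+}\Q^{H}_{Y}[n]$ (no log), not of $j_{+}\Q^{H}_{U}[n]=f_{+}\rho_{+}\Q^{H}_{V}[n]$; the latter is a single mixed Hodge module and $IC^{H}_{X}\hookrightarrow j_{+}\Q^{H}_{U}[n]$ is not split. ``Chasing the commutative square and applying Corollary~\ref{cor4.2}'' does not produce the missing bijectivity.

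What the paper does (Corollary~\ref{splitThm} and the theorem and lemma just before it) is analyze the cone: the triangle
\[
i_{S+}i_{S}^{!}IC^{H}_{X}\to IC^{H}_{X}\to j_{+}\Q^{H}_{U}[n]\xrightarrow{+1}
\]
reduces the question to showing $\cH^{i}\bigl(Gr^{F}_{-p}DR(i_{S+}i_{S}^{!}IC^{H}_{X})\bigr)=0$ for $i\le -n_{S}$. Since $\cH^{j}(i_{S+}i_{S}^{!}IC^{H}_{X})=0$ for $j\le 0$, this follows from the lemma that for any $\cN\in MHM(Z)$ supported on a closed $T$ of dimension $n_{T}$ one has $\cH^{i}(Gr^{F}_{p}DR(\cN))=0$ for $i<-n_{T}$ (proved by strict-support decomposition and pulling back to a resolution of $T$). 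This is the step your proposal is missing; once you insert it, the rest of your outline goes through.
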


\subsection*{Acknowledgments} The majority of the results that appear in this paper appeared in the author's PhD thesis \cite{hiattT} at Purdue University, which was directed by Donu Arapura, whose work \cite{arapura} heavily inspired this paper. The author would like to thank Donu Arapura for the helpful conversations, comments, and constant encouragement while writing this paper. Finally, the author would like to thank the referee for careful reading and for many helpful comments.

\section{Local Cohomology}

\begin{set}\label{set1}
 Let $Y$ and $X$ be varieties  of dimension $n_{Y}$ and $n_{X}$, respectively. A variety will always mean a reduced separated scheme of finite type over $\C$. We will consider maps $f:Y \rightarrow X$ and assume all maps $f:Y \rightarrow X$ to be surjective and proper. Let $S$ be a closed subset of $X$ of dimension $n_{S}$ and $E = f^{-1}(S)_{red} \subset Y$. We will denote their natural maps as $i_{S}:S \hookrightarrow X$ and $\iota:E \hookrightarrow Y$. We will also denote $U = X \backslash S$ and $V =f^{-1}(U)$ with natural maps $j:U \hookrightarrow X$ and $\rho:V \hookrightarrow Y$.

With the given notation, we have the commutative diagram.

$$
\adjustbox{scale=1.25,center}{\begin{tikzcd}
E \arrow[r, "\iota"] \arrow[d, "f|_{E}"] & Y \arrow[d, "f"] & V \arrow[l, "\rho"] \arrow[d, "f|_{V}"] \\
S \arrow[r, "i_{S}"] & X & U \arrow[l, "j"].
\end{tikzcd}}$$

In the case $f: Y \rightarrow X$ is a resolution of singularities, we will always assume $S$ contains the singular locus of $X,$ and $S$ does not include any irreducible component of $X.$ By a strong log resolution of $S$, we mean $f: Y \rightarrow X$ is a resolution of singularities such that the induced map $f|_{V}:V \rightarrow U$ is an isomorphism, and $E$ is a simple normal crossing divisor on $Y.$ Since we work with complex varieties, such resolutions exist, \cite{hironaka}.
\end{set}

\begin{nota}\label{nota1} The right derived functor of an additive functor $F$, if it exists, will be denoted as $\bR F$. For a closed subset $\mathcal{Z}$, we will denote $\underline{\Gamma_{\mathcal{Z}}}$ as the sheaf of sections with support in $\mathcal{Z}$ and denote the global sections with support in $\mathcal{Z}$ as  $\Gamma_{\mathcal{Z}}$. The $i^{th}$ cohomology of the right derived functors $\bR \underline{\Gamma_{\mathcal{Z}}}$ and $\bR \Gamma_{\mathcal{Z}}$ will be denoted as $\cH^{i}_{\mathcal{Z}}$ and $H^{i}_{\mathcal{Z}}$, respectively. The composition $\bR \Gamma_{\mathcal{Z}} \circ \bR Hom(\cF, \bullet)$ will be denoted as $\bR Hom_{\mathcal{Z}}(\cF, \bullet)$ and the $i^{th}$ cohomology as $Ext^{i}_{\mathcal{Z}}(\cF, \bullet)$. 
\end{nota}

%\begin{nota}\label{nota1.8}
%Given a complex of sheaves %$\cF$, we have the truncations
%$$\sigma_{>n}\cF: \cdots \rightarrow 0 \rightarrow 0 \rightarrow \cF^{n+1} \rightarrow \cF^{n+2} \rightarrow \cdots$$
%$$\sigma_{\leq n}\cF: \cdots \rightarrow \cF^{n-1} \rightarrow \cF^{n} \rightarrow 0 \rightarrow 0 \rightarrow \cdots$$
%$$\tau^{>n}\cF: \cdots  \rightarrow 0 \rightarrow 0 \rightarrow \im d^{n} \rightarrow \cF^{n+1} \rightarrow \cdots$$
%$$\tau^{\leq n}\cF: \cdots \rightarrow \cF^{n-1} \rightarrow \ker d^{n} \rightarrow 0  \rightarrow 0 \rightarrow \cdots.$$
%\end{nota}

\begin{lemma}\label{lemma1.3} Given $\cF \in D_{qcoh}(\cO_Y)$, the natural maps
$$\textbf{R}f_{\ast}\textbf{R}\rho_{\ast}\rho^{\ast}\mathcal{F} \rightarrow \textbf{R}j_{\ast}j^{\ast}\textbf{R}f_{\ast}\mathcal{F} \quad \text{and} \quad
\textbf{R}f_{\ast}\textbf{R}\underline{\Gamma_{E}}\mathcal{F} \rightarrow \textbf{R}\underline{\Gamma_{S}}\textbf{R}f_{\ast}\mathcal{F}$$
are quasi-isomorphisms.
\end{lemma}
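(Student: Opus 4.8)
The plan is to prove the first quasi-isomorphism directly, by base change along the open immersion $j$, and then to deduce the second from the first using the standard local-cohomology triangle together with the five lemma in triangulated categories.

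For the first map, I would begin by noting that the square of Setting~\ref{set1} with corners $V,Y,U,X$ is Cartesian, since $V = f^{-1}(U)$ by definition, and that $f\circ\rho = j\circ(f|_{V})$. Because $j$ is an open immersion, the restriction functor $j^{*}$ has an exact left adjoint (extension by zero), so it preserves $K$-injective complexes; together with the elementary identity $(f_{*}(-))|_{U} = (f|_{V})_{*}((-)|_{V})$ on sheaves, this gives a canonical isomorphism $j^{*}\bR f_{*}\mathcal F \xrightarrow{\sim} \bR(f|_{V})_{*}\rho^{*}\mathcal F$. Applying $\bR j_{*}$ and using $\bR f_{*}\bR\rho_{*} = \bR(f\circ\rho)_{*} = \bR(j\circ f|_{V})_{*} = \bR j_{*}\bR(f|_{V})_{*}$ then yields
$$\bR f_{*}\bR\rho_{*}\rho^{*}\mathcal F \;=\; \bR j_{*}\bR(f|_{V})_{*}\rho^{*}\mathcal F \;\xrightarrow{\sim}\; \bR j_{*}\,j^{*}\bR f_{*}\mathcal F,$$
and a short unwinding of definitions identifies this isomorphism with the natural comparison map of the statement.

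For the second map, I would use the standard distinguished triangle $\bR\underline{\Gamma_{\mathcal{Z}}}(-) \to (-) \to \bR k_{*}k^{*}(-) \xrightarrow{+1}$ attached to a closed subset $\mathcal{Z}$ of a scheme, with $k$ the complementary open immersion, which is natural in the argument. Applying the triangulated functor $\bR f_{*}$ to this triangle on $Y$ (for $E$, with $\rho$) gives
$$\bR f_{*}\bR\underline{\Gamma_{E}}\mathcal F \to \bR f_{*}\mathcal F \to \bR f_{*}\bR\rho_{*}\rho^{*}\mathcal F \xrightarrow{+1},$$
while the triangle on $X$ (for $S$, with $j$), evaluated at $\bR f_{*}\mathcal F$, reads
$$\bR\underline{\Gamma_{S}}\bR f_{*}\mathcal F \to \bR f_{*}\mathcal F \to \bR j_{*}j^{*}\bR f_{*}\mathcal F \xrightarrow{+1}.$$
The natural comparison maps assemble into a morphism from the first triangle to the second: the identity on the common middle term, the isomorphism of the first part on the right-hand term, and the natural base-change map $\bR f_{*}\bR\underline{\Gamma_{E}}\mathcal F \to \bR\underline{\Gamma_{S}}\bR f_{*}\mathcal F$ on the left-hand term. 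Since two of these three are isomorphisms, so is the third, by the triangulated five lemma, which is exactly the claim.

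The ingredients used here — open-immersion base change, the local-cohomology triangle, and the five lemma — are all standard, so the only genuine work lies in the bookkeeping: checking that the comparison morphisms really do commute with the triangle structure, i.e.\ that the relevant squares built from adjunction units and base-change morphisms commute. This is where I expect the main (if routine) effort to go; I would either verify it by hand or cite it (e.g.\ from SGA~2). An alternative that bypasses the triangulated argument is to observe that $E = f^{-1}(S)$ forces the equality of functors $f_{*}\circ\underline{\Gamma_{E}} = \underline{\Gamma_{S}}\circ f_{*}$ on $\cO_{Y}$-modules, that $\underline{\Gamma_{E}}$ sends injective $\cO_{Y}$-modules to injective — hence $f_{*}$-acyclic — ones, and that $f_{*}$ sends flasque sheaves, in particular injectives, to $\underline{\Gamma_{S}}$-acyclic sheaves; the Grothendieck composition-of-derived-functors formalism then shows that both $\bR f_{*}\bR\underline{\Gamma_{E}}$ and $\bR\underline{\Gamma_{S}}\bR f_{*}$ compute the derived functor of this common composite, and the comparison is the asserted quasi-isomorphism.
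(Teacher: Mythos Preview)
Your proof is correct and follows essentially the same route as the paper: establish the first quasi-isomorphism via open-immersion (flat) base change $j^{*}\bR f_{*}\mathcal F \simeq \bR(f|_{V})_{*}\rho^{*}\mathcal F$ and composition of derived pushforwards, then deduce the second by comparing the two local-cohomology triangles and invoking the triangulated five lemma. Your write-up is in fact more careful than the paper's about the compatibility of the comparison maps with the triangle structure, and your alternative argument via $f_{*}\circ\underline{\Gamma_{E}} = \underline{\Gamma_{S}}\circ f_{*}$ and Grothendieck composition is a nice bonus, but the core strategy is identical.
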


\begin{proof}  Since the maps $j$ and $\rho$ are both flat, there is a natural functorial isomorphism $j^{\ast}\textbf{R}f_{\ast}\mathcal{F} \simeq \textbf{R}f|_{V \ast}\rho^{\ast}\mathcal{F}$ \cite[Prop II.5.12]{Hart1}. Therefore, we have quasi-isomorphisms
\begin{center}
 $\textbf{R}f_{\ast}\textbf{R}\rho_{\ast}\rho^{\ast}\mathcal{F} \simeq \textbf{R}j_{\ast}\textbf{R}f|_{V \ast}\rho^{\ast}\mathcal{F} \simeq  \textbf{R}j_{\ast}j^{\ast}\textbf{R}f_{\ast}\mathcal{F}$.
\end{center} 
Which proves the first quasi-isomorphism.

There is a map of triangles with the natural maps,
\begin{center}
\begin{tikzcd}
\textbf{R}f_{\ast}\textbf{R}\underline{\Gamma_{E}}\mathcal{F} \arrow[d] \arrow[r] & \textbf{R}f_{\ast}\mathcal{F} \arrow[r] \arrow[d, "\parallel "] & \textbf{R}f_{\ast}\textbf{R}\rho_{\ast}\rho^{\ast}\mathcal{F} \arrow[d, "\wr "] \arrow[r, "+1"] & \hfill\\
\textbf{R}\underline{\Gamma_{S}}\textbf{R}f_{\ast}\mathcal{F} \arrow[r] & \textbf{R}f_{\ast}\mathcal{F} \arrow[r] & \textbf{R}j_{\ast}j^{\ast}\textbf{R}f_{\ast}\mathcal{F} \arrow[r, "+1"] & \hfill.
\end{tikzcd}
\end{center}
The map $ \textbf{R}f_{\ast}\textbf{R}\underline{\Gamma_{E}}\mathcal{F} \rightarrow \textbf{R}\underline{\Gamma_{S}}\textbf{R}f_{\ast}\mathcal{F} $ must necessarily be a quasi-isomorphism from this map of triangles.  
\end{proof}

\begin{prop}\label{prop1.4} Let  $\omega^{\bullet}_{X}$ and $\omega^{\bullet}_{Y}$ denote dualizing complexes of $X$ and $Y$, respectively. If $\mathcal{F} \in D^{b}_{ qcoh}(\cO_Y)$, then there is a quasi-isomorphism
\begin{center}
$\textbf{R}f_{\ast}\textbf{R}\underline{\Gamma_{E}}\bR \mathcal{H}om(\mathcal{F}, \omega_{Y}^{\bullet}) \simeq \textbf{R}\underline{\Gamma_{S}}\bR\mathcal{H}om(\textbf{R}f_{\ast}\mathcal{F}, \omega_{X}^{\bullet})$.
\end{center}
\end{prop}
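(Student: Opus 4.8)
The plan is to derive the statement from Lemma \ref{lemma1.3} together with Grothendieck–Serre duality for the proper map $f$. The first move is to commute $\textbf{R}f_{\ast}$ past the local cohomology functor. Set $\mathcal{G} := \bR\mathcal{H}om(\mathcal{F},\omega^{\bullet}_{Y})$. Since $\mathcal{F} \in D^{b}_{qcoh}(\cO_Y)$ and $\omega^{\bullet}_{Y}$ is a dualizing complex, $\mathcal{G}$ lies in $D_{qcoh}(\cO_Y)$, so the second quasi-isomorphism of Lemma \ref{lemma1.3}, applied to $\mathcal{G}$ in place of $\mathcal{F}$, gives
\[
\textbf{R}f_{\ast}\textbf{R}\underline{\Gamma_{E}}\bR\mathcal{H}om(\mathcal{F},\omega^{\bullet}_{Y}) \;\simeq\; \textbf{R}\underline{\Gamma_{S}}\,\textbf{R}f_{\ast}\bR\mathcal{H}om(\mathcal{F},\omega^{\bullet}_{Y}).
\]

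Next I would invoke duality. Because $f$ is proper, the functor $\textbf{R}f_{\ast}$ has a right adjoint $f^{!}$ and Grothendieck duality yields a natural isomorphism $\textbf{R}f_{\ast}\bR\mathcal{H}om(\mathcal{F}, f^{!}\omega^{\bullet}_{X}) \simeq \bR\mathcal{H}om(\textbf{R}f_{\ast}\mathcal{F}, \omega^{\bullet}_{X})$, where $\textbf{R}f_{\ast}\mathcal{F} \in D^{b}_{qcoh}(\cO_X)$ again by properness. Writing $a_X : X \to \Spec\C$ and $a_Y : Y \to \Spec\C$ for the structure morphisms, one may take $\omega^{\bullet}_{X} = a_X^{!}\C$ and $\omega^{\bullet}_{Y} = a_Y^{!}\C$; since $a_Y = a_X \circ f$ and exceptional inverse image is compatible with composition, $f^{!}\omega^{\bullet}_{X} \simeq \omega^{\bullet}_{Y}$. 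Substituting and then applying $\textbf{R}\underline{\Gamma_{S}}$ to both sides gives
\[
\textbf{R}\underline{\Gamma_{S}}\,\textbf{R}f_{\ast}\bR\mathcal{H}om(\mathcal{F}, \omega^{\bullet}_{Y}) \;\simeq\; \textbf{R}\underline{\Gamma_{S}}\,\bR\mathcal{H}om(\textbf{R}f_{\ast}\mathcal{F}, \omega^{\bullet}_{X}),
\]
and splicing this with the display of the previous paragraph produces exactly the claimed quasi-isomorphism. The only remaining point is to check, which is routine from the naturality of the adjunction counits and of the local-cohomology triangles, that the composite equals the evident natural morphism.

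The main obstacle is the second step: one must have available the correct form of Grothendieck duality for a proper (surjective) morphism between possibly singular complex varieties, together with the identification $f^{!}\omega^{\bullet}_{X}\simeq\omega^{\bullet}_{Y}$ of dualizing complexes under $f^{!}$. One should also take care that $\bR\mathcal{H}om(\mathcal{F},\omega^{\bullet}_{Y})$ genuinely lands in $D_{qcoh}(\cO_Y)$, so that Lemma \ref{lemma1.3} is applicable; this is where the boundedness of $\mathcal{F}$ and the fact that $\omega^{\bullet}_{Y}$ is a dualizing complex (with coherent cohomology, bounded below) are used. Once these inputs are in place, the rest is a formal chase of derived functors.
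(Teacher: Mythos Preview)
Your proof is correct and follows essentially the same route as the paper: the paper also uses Grothendieck duality to identify $\textbf{R}f_{\ast}\bR\mathcal{H}om(\mathcal{F},\omega^{\bullet}_{Y})$ with $\bR\mathcal{H}om(\textbf{R}f_{\ast}\mathcal{F},\omega^{\bullet}_{X})$ and then invokes Lemma~\ref{lemma1.3} to pass between $\textbf{R}\underline{\Gamma_{S}}\textbf{R}f_{\ast}$ and $\textbf{R}f_{\ast}\textbf{R}\underline{\Gamma_{E}}$. The only difference is cosmetic (you apply Lemma~\ref{lemma1.3} first and duality second, and you spell out the identification $f^{!}\omega^{\bullet}_{X}\simeq\omega^{\bullet}_{Y}$ and the quasi-coherence check, which the paper leaves implicit).
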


\begin{proof} By Grothendieck duality, we have the quasi-isomorphism $\textbf{R}f_{\ast}\textbf{R}\mathcal{H}om(\mathcal{F}, \omega^{\bullet}_{Y})  
\simeq \textbf{R}\mathcal{H}om(\textbf{R}f_{\ast}\mathcal{F}, \omega^{\bullet}_{X})$. If we apply $\textbf{R}\underline{\Gamma_{S}}$ to both sides and use Lemma \ref{lemma1.3}, then we have the desired quasi-isomorphism. 
\end{proof}

\begin{thm}\label{thm1.5}
Assume $X$ is proper and $\mathcal{F} \in D^{b}_{coh}(\cO_{Y})$. If there exists $q \in \Z$ such that $R^{i}f_{\ast}\mathcal{F} = 0$ for all $i \geq q$, then
\begin{center}
$Ext^{p}_{E}(\mathcal{F} \otimes f^{*}\cV, \omega^{\bullet}_{Y}) = 0$ for $ p \leq -n_{S} -q $,
\end{center}
where $\cV$ is any locally free sheaf on $X.$
\end{thm}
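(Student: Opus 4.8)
The plan is to push the local-cohomology $Ext$-groups on $Y$ down to $X$ by Grothendieck duality (Proposition \ref{prop1.4}), and then to combine the fact that the resulting complex on $X$ is supported on $S$ with the degree bound coming from the vanishing $R^{i}f_{\ast}\cF = 0$ for $i \geq q$.

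First I would unwind the definition and reduce to a statement on $X$. By Notation \ref{nota1},
$$Ext^{p}_{E}(\cF \otimes f^{\ast}\cV, \omega^{\bullet}_{Y}) = \mathbb{H}^{p}\big(Y,\ \textbf{R}\underline{\Gamma_{E}}\,\textbf{R}\mathcal{H}om(\cF \otimes f^{\ast}\cV, \omega^{\bullet}_{Y})\big) = \mathbb{H}^{p}\big(X,\ \textbf{R}f_{\ast}\textbf{R}\underline{\Gamma_{E}}\,\textbf{R}\mathcal{H}om(\cF \otimes f^{\ast}\cV, \omega^{\bullet}_{Y})\big),$$
the second equality because $\textbf{R}\Gamma(Y,-) = \textbf{R}\Gamma(X, \textbf{R}f_{\ast}(-))$. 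Since $\cV$ is locally free, $\textbf{R}\mathcal{H}om(\cF \otimes f^{\ast}\cV, \omega^{\bullet}_{Y}) \simeq \textbf{R}\mathcal{H}om(\cF, \omega^{\bullet}_{Y}) \otimes f^{\ast}\cV^{\vee}$; local cohomology commutes with tensoring by a locally free sheaf, and the projection formula pulls $\cV^{\vee}$ out of $\textbf{R}f_{\ast}$. Proposition \ref{prop1.4} then gives
$$\textbf{R}f_{\ast}\textbf{R}\underline{\Gamma_{E}}\,\textbf{R}\mathcal{H}om(\cF \otimes f^{\ast}\cV, \omega^{\bullet}_{Y}) \simeq \textbf{R}\underline{\Gamma_{S}}\,\textbf{R}\mathcal{H}om(\textbf{R}f_{\ast}\cF, \omega^{\bullet}_{X}) \otimes \cV^{\vee}.$$
So it is enough to show that the cohomology sheaves $\cH^{p}$ of $\textbf{R}\underline{\Gamma_{S}}\,\textbf{R}\mathcal{H}om(\textbf{R}f_{\ast}\cF, \omega^{\bullet}_{X})$ vanish for $p \leq -n_{S} - q$: tensoring with the locally free $\cV^{\vee}$ preserves this, and then the hypercohomology spectral sequence $H^{a}(X, \cH^{b}) \Rightarrow \mathbb{H}^{a+b}$, together with $H^{a}(X,-) = 0$ for $a < 0$, forces $\mathbb{H}^{p} = 0$ for $p \leq -n_{S}-q$.

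The vanishing of those cohomology sheaves is where $\dim S$ enters. Let $i_{S}\colon S \hookrightarrow X$ be the (reduced) closed immersion with open complement $j\colon U \hookrightarrow X$, so that $\textbf{R}\underline{\Gamma_{S}}(-) \simeq i_{S\ast}i_{S}^{!}(-)$ (the fibre of $(-) \to \textbf{R}j_{\ast}j^{\ast}(-)$), and set $\omega^{\bullet}_{S} := i_{S}^{!}\omega^{\bullet}_{X}$, the (normalized) dualizing complex of $S$, which is concentrated in degrees $[-n_{S},0]$. Writing $\cG = \textbf{R}f_{\ast}\cF$ and using the standard identity $i_{S}^{!}\textbf{R}\mathcal{H}om_{X}(\cG, \omega^{\bullet}_{X}) \simeq \textbf{R}\mathcal{H}om_{S}(\textbf{L}i_{S}^{\ast}\cG, \omega^{\bullet}_{S})$, we get
$$\textbf{R}\underline{\Gamma_{S}}\,\textbf{R}\mathcal{H}om(\textbf{R}f_{\ast}\cF, \omega^{\bullet}_{X}) \simeq i_{S\ast}\,\textbf{R}\mathcal{H}om_{S}(\textbf{L}i_{S}^{\ast}\textbf{R}f_{\ast}\cF,\ \omega^{\bullet}_{S}).$$
By hypothesis $R^{i}f_{\ast}\cF = 0$ for $i \geq q$, so $\textbf{R}f_{\ast}\cF \in D^{\leq q-1}(\cO_{X})$, and since $\textbf{L}i_{S}^{\ast}$ does not raise cohomological degrees, $\textbf{L}i_{S}^{\ast}\textbf{R}f_{\ast}\cF \in D^{\leq q-1}(\cO_{S})$. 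As $\omega^{\bullet}_{S}$ sits in degrees $\geq -n_{S}$, the complex $\textbf{R}\mathcal{H}om_{S}(\textbf{L}i_{S}^{\ast}\textbf{R}f_{\ast}\cF, \omega^{\bullet}_{S})$ sits in degrees $\geq -n_{S}-(q-1)$; that is, its cohomology sheaves vanish in all degrees $\leq -n_{S}-q$. Combined with the reduction above, $Ext^{p}_{E}(\cF \otimes f^{\ast}\cV, \omega^{\bullet}_{Y}) = 0$ for $p \leq -n_{S}-q$.

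The step to watch is the duality manipulation in the previous paragraph: one must be sure that $\omega^{\bullet}_{S} = i_{S}^{!}\omega^{\bullet}_{X}$ is genuinely concentrated in degrees $[-n_{S},0]$ (this is the standard normalization of dualizing complexes on finite-type $\C$-schemes, and it is precisely why $\dim S$, rather than $\dim X$, controls the vanishing range), and that the identity $i_{S}^{!}\textbf{R}\mathcal{H}om_{X}(-, \omega^{\bullet}_{X}) \simeq \textbf{R}\mathcal{H}om_{S}(\textbf{L}i_{S}^{\ast}(-), \omega^{\bullet}_{S})$ is applied to $\cG = \textbf{R}f_{\ast}\cF \in D^{b}_{coh}(\cO_{X})$ correctly — note that $\textbf{L}i_{S}^{\ast}\textbf{R}f_{\ast}\cF$ may be unbounded below when $S$ is singular, but it stays bounded above by $q-1$, which is all the degree count requires. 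Everything else is formal once Proposition \ref{prop1.4} is in hand.
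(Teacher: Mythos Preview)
Your reduction to $X$ via Proposition~\ref{prop1.4} is fine, but the key step has a real gap: the identification $\textbf{R}\underline{\Gamma_{S}}(-) \simeq i_{S\ast}i_{S}^{!}(-)$ you invoke is \emph{not} compatible with the coherent-duality upper-shriek used in the next line. For a closed immersion of varieties, the Grothendieck-duality functor $i_{S}^{!}$ satisfies $i_{S\ast}i_{S}^{!}\mathcal{G} \simeq \textbf{R}\mathcal{H}om_{\mathcal{O}_X}(i_{S\ast}\mathcal{O}_{S}, \mathcal{G})$, whereas local cohomology is the colimit $\textbf{R}\underline{\Gamma_{S}}\mathcal{G} \simeq \varinjlim_{n}\textbf{R}\mathcal{H}om_{\mathcal{O}_X}(\mathcal{O}_X/\mathcal{I}_S^{n}, \mathcal{G})$ over all infinitesimal thickenings. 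These differ already for $X = \mathbb{A}^1$, $S = \{0\}$, $\mathcal{G} = \mathcal{O}_X$: the first is $k[-1]$ while the second is $(k[x,x^{-1}]/k[x])[-1]$. Consequently your displayed isomorphism
$\textbf{R}\underline{\Gamma_{S}}\,\textbf{R}\mathcal{H}om(\textbf{R}f_{\ast}\mathcal{F}, \omega^{\bullet}_{X}) \simeq i_{S\ast}\,\textbf{R}\mathcal{H}om_{S}(\textbf{L}i_{S}^{\ast}\textbf{R}f_{\ast}\mathcal{F}, \omega^{\bullet}_{S})$
is false, and the argument as written does not go through.

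The repair is near: run the duality identity $i_{S_n}^{!}\,\textbf{R}\mathcal{H}om_X(-,\omega^\bullet_X) \simeq \textbf{R}\mathcal{H}om_{S_n}(\textbf{L}i_{S_n}^{\ast}(-),\omega^\bullet_{S_n})$ on each thickening $S_{n}$ and then pass to the filtered colimit. Since $\omega^{\bullet}_{S_n}$ still lives in degrees $\geq -n_{S}$ and filtered colimits of sheaves are exact, your degree count survives and yields the sheaf-level vanishing directly. The paper takes the dual route: it pairs $\textbf{R}Hom_{S}(\textbf{R}f_{\ast}\mathcal{F}\otimes \mathcal{V},\omega^\bullet_X)$ with cohomology on the formal completion of $X$ along $S$ via the duality of \cite{AJL}, and then bounds the latter using $\dim S_{k} = n_{S}$ together with the Leray spectral sequence. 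The same infinitesimal neighbourhoods appear, but on the inverse-limit side, and it is there that properness of $X$ is actually used.
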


\begin{proof}
  If we apply $\textbf{R}\Gamma(X, \bullet)$ to Proposition \ref{prop1.4}, then there is a quasi-isomorphism
 \begin{equation}
\textbf{R}Hom_{E}(\mathcal{F} \otimes f^{*}\cV, \omega^{\bullet}_{Y}) \simeq \textbf{R}Hom_{S}(\textbf{R}f_{\ast}\mathcal{F} \otimes \cV, \omega^{\bullet}_{X}).
\end{equation}
If $\mathcal{X}$ is the completion of $X$ along $S$ and $\kappa: \mathcal{X} \rightarrow X$ is the natural map, then by \cite[5.3]{AJL} we have the quasi-isomorphism
$$\textbf{R}\Gamma(\mathcal{X}, \kappa^{\ast}(\textbf{R}f_{\ast}\mathcal{F} \otimes \cV)) \simeq Hom_{\C}(\textbf{R}Hom_{S}(\textbf{R}f_{\ast}\mathcal{F} \otimes \cV, \omega^{\bullet}_{X}), \C).$$
By equation (3), we receive the quasi-isomorphism
\begin{equation}\textbf{R}\Gamma(\mathcal{X}, \kappa^{\ast}(\textbf{R}f_{\ast}\mathcal{F} \otimes \cV)) \simeq  Hom_{\C}(\textbf{R}Hom_{E}(\mathcal{F} \otimes f^{*}\cV, \omega^{\bullet}_{Y}),\C) .
\end{equation}
Let $S_{k}$ be the $k$th infinitesimal neighborhood of $S$. By \cite[Prop. 0.13.3.1]{EGA}, we have the isomorphism
$$H^{a}(\mathcal{X}, \kappa^{\ast}(R^{b}f_{\ast}\mathcal{F} \otimes \cV)) \cong \displaystyle \varprojlim_{k} H^{a}(S_{k}, (R^{b}f_{\ast}\mathcal{F} \otimes \cV) \otimes \mathcal{O}_{S_{k}}).$$
Recall that the dimension of $S$ is $n_{S}$ and we are assuming $R^{i}f_{\ast}\mathcal{F} = 0$ for all $i \geq q$. By the isomorphism above,
 $$H^{a}(\mathcal{X}, \kappa^{\ast}(R^{b}f_{\ast}\mathcal{F} \otimes \cV)) = 0 \quad  \text{whenever $ a + b \geq n_{S} + q$}.$$
Using the Leray spectral sequence, we obtain
 $$H^{p}(\mathcal{X}, \kappa^{\ast}(\textbf{R}f_{\ast}\mathcal{F} \otimes \cV)) = 0 \quad \text{for $p \geq n_{S} + q$}.$$
  Now, using equation (4), we have
$$Hom_{\C}(Ext^{-p}_{E}(\mathcal{F} \otimes f^{*}\cV, \omega^{\bullet}_{Y}),\C) \cong H^{p}(\mathcal{X}, \kappa^{\ast}(\textbf{R}f_{\ast}\mathcal{F} \otimes \cV))  = 0 \quad \text{for $p \geq n_{S} +q$}.$$
Hence $Ext^{p}_{E}(\mathcal{F} \otimes f^{*}\cV, \omega^{\bullet}_{Y}) = 0$ for $  p \leq -n_{S} - q$. 

\end{proof}

\begin{cor}\label{cor1.6} Assume $X$ is proper, and $Y$ is smooth and irreducible. If $\mathcal{F}$ is a locally free sheaf on $Y$ and there exists $q \in \Z$ such that $R^{i}f_{\ast}\mathcal{F} = 0$ for all $i \geq q$, then 
$$H^{p}_{E}(Y, \mathcal{F}^{\vee} \otimes f^{\ast}\cV\otimes \Omega^{n_{Y}}_{Y}) = 0 \quad \text{for $p \leq n_{Y} - n_{S} -q  $},$$
where $\mathcal{F}^{\vee} = \mathcal{H}om(\mathcal{F}, \mathcal{O}_{Y})$ and $\cV$ is any locally free sheaf on $X$. 
\end{cor}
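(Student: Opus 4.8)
The plan is to deduce the corollary directly from Theorem~\ref{thm1.5}. Since $Y$ is smooth and irreducible of dimension $n_Y$, its dualizing complex is $\omega^{\bullet}_Y \simeq \Omega^{n_Y}_Y[n_Y]$, with $\Omega^{n_Y}_Y$ a line bundle. First I would apply Theorem~\ref{thm1.5} with the locally free sheaf $\cV^{\vee}$ in place of $\cV$; this is legitimate because that theorem holds for an arbitrary locally free sheaf on $X$, and the needed hypothesis $R^i f_{\ast}\mathcal{F} = 0$ for all $i \geq q$ is exactly what is assumed (and $\mathcal{F}$, being locally free, lies in $D^b_{coh}(\cO_Y)$). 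This gives
\[
Ext^p_E(\mathcal{F} \otimes f^{\ast}\cV^{\vee}, \omega^{\bullet}_Y) = 0 \qquad \text{for } p \leq -n_S - q.
\]

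Next I would rewrite this group as a local hypercohomology group. Because $\mathcal{F}$ and $f^{\ast}\cV^{\vee}$ are both locally free, the sheaf $\mathcal{G} := \mathcal{F} \otimes f^{\ast}\cV^{\vee}$ is locally free, hence $\bR\mathcal{H}om(\mathcal{G}, \omega^{\bullet}_Y) \simeq \mathcal{G}^{\vee} \otimes \omega^{\bullet}_Y$. Since $\cV$ is locally free we have $(f^{\ast}\cV^{\vee})^{\vee} \simeq f^{\ast}\cV$, so $\mathcal{G}^{\vee} \otimes \omega^{\bullet}_Y \simeq \mathcal{F}^{\vee} \otimes f^{\ast}\cV \otimes \Omega^{n_Y}_Y[n_Y]$. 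Applying $\bR\underline{\Gamma_E}$ and passing to $p$-th cohomology then identifies
\[
Ext^p_E(\mathcal{F} \otimes f^{\ast}\cV^{\vee}, \omega^{\bullet}_Y) \cong H^{p+n_Y}_E\big(Y,\, \mathcal{F}^{\vee} \otimes f^{\ast}\cV \otimes \Omega^{n_Y}_Y\big).
\]
Reindexing by $p' = p + n_Y$ turns the vanishing range $p \leq -n_S - q$ into $p' \leq n_Y - n_S - q$, which is exactly the assertion.

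There is no real obstacle here; the argument is essentially an unwinding of definitions. The only points needing care are bookkeeping ones: carrying the shift by $n_Y$ coming from $\omega^{\bullet}_Y$ through consistently, confirming that $Ext^{\bullet}_E$ agrees with local hypercohomology $H^{\bullet}_E$ once $\mathcal{G}$ is locally free, and noting that replacing $\cV$ by $\cV^{\vee}$ at the outset is harmless (Theorem~\ref{thm1.5} being stated for every locally free sheaf on $X$) and is precisely what produces $f^{\ast}\cV$ rather than $f^{\ast}\cV^{\vee}$ in the final formula.
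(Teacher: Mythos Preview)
Your proposal is correct and follows essentially the same route as the paper: identify $\omega^{\bullet}_Y \simeq \Omega^{n_Y}_Y[n_Y]$, use local freeness of $\mathcal{F}\otimes f^{\ast}\cV^{\vee}$ to rewrite the Ext group as local cohomology $H^{p+n_Y}_E$, and then invoke Theorem~\ref{thm1.5} with $\cV^{\vee}$ in place of $\cV$. The paper's proof is the same computation written more tersely.
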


\begin{proof}
If $Y$ is smooth and irreducible, then $\omega^{\bullet}_{Y} \simeq \Omega^{n_{Y}}[n_{Y}]$. Therefore,
$$Ext^{-p}_{E}(\mathcal{F} \otimes f^{*}\cV^{\vee}, \omega^{\bullet}_{Y}) \cong Ext^{n_{Y}-p}_{E}(\cO_{Y}, \mathcal{F}^{\vee} \otimes f^{*}\cV \otimes \Omega^{n_{Y}}_{Y}) \cong H_{E}^{n_{Y}-p}(Y, \mathcal{F}^{\vee} \otimes f^{*}\cV \otimes \Omega^{n_{Y}}_{Y}).$$
Now apply Theorem \ref{thm1.5}.
\end{proof}

 If $\cF = \Omega^{n-q}_{Y}(\log E)(-E)$, then $R^{i}f_{\ast}\Omega^{n-q}_{Y}(\log E)(-E)= 0$ for all $i \geq q +1$ \cite{steenbrink}. By the previous result, we obtain Arapura's vanishing theorem. 

\begin{cor}\label{cor1.7}\cite{arapura}
If $X$ is an irreducible projective variety and $f:Y \rightarrow X$ is a strong log resolution of $S,$ then
$$H^{p}_{E}(Y, \Omega^{q}_{Y}(\log E) \otimes f^{\ast}\cV) = 0 \quad \text{for $p + q \leq n_{Y} - n_{S} -1$},$$
where $\cV$ is any locally free sheaf on $X$. 
\end{cor}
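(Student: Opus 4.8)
The plan is to deduce this immediately from Corollary \ref{cor1.6} by choosing the locally free sheaf $\cF$ cleverly and then using the self-duality of the logarithmic de Rham complex to recognize the resulting local cohomology group.

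First I would set $\cF = \Omega^{n_Y - q}_Y(\log E)(-E)$. Since $Y$ is smooth and $E$ is a reduced simple normal crossing divisor, $\Omega^{n_Y - q}_Y(\log E)$ is locally free, hence so is $\cF$; moreover $Y$ is smooth and irreducible (being a resolution of the irreducible variety $X$) and $X$ is projective, hence proper, so all hypotheses of Corollary \ref{cor1.6} are in place once we supply the required higher-direct-image vanishing. The needed input is $R^{i}f_{\ast}\bigl(\Omega^{n_Y - q}_Y(\log E)(-E)\bigr) = 0$ for $i \geq q+1$, i.e.\ the standard Esnault--Viehweg/Steenbrink-type vanishing $R^{i}f_{\ast}\Omega^{j}_Y(\log E)(-E) = 0$ for $i + j > n_Y$, applied with $j = n_Y - q$ (see \cite{steenbrink}). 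Thus Corollary \ref{cor1.6}, applied with its ``$q$'' taken to be $q+1$, yields $H^{p}_{E}\bigl(Y, \cF^{\vee} \otimes f^{\ast}\cV \otimes \Omega^{n_Y}_Y\bigr) = 0$ for $p \leq n_Y - n_S - q - 1$.

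It then remains to identify $\cF^{\vee} \otimes \Omega^{n_Y}_Y$ with $\Omega^{q}_Y(\log E)$. For this I would use the perfect pairing $\Omega^{k}_Y(\log E) \otimes \Omega^{n_Y - k}_Y(\log E) \to \Omega^{n_Y}_Y(\log E) = \Omega^{n_Y}_Y(E)$ (which is perfect since locally $\Omega^{1}_Y(\log E)$ is free), giving $\bigl(\Omega^{n_Y - q}_Y(\log E)\bigr)^{\vee} \cong \Omega^{q}_Y(\log E) \otimes \bigl(\Omega^{n_Y}_Y(E)\bigr)^{-1}$, and hence
$$\cF^{\vee} \otimes \Omega^{n_Y}_Y \cong \Omega^{q}_Y(\log E) \otimes \bigl(\Omega^{n_Y}_Y(E)\bigr)^{-1} \otimes \cO_Y(E) \otimes \Omega^{n_Y}_Y \cong \Omega^{q}_Y(\log E),$$
the last step because $\bigl(\Omega^{n_Y}_Y(E)\bigr)^{-1} \otimes \cO_Y(E) \otimes \Omega^{n_Y}_Y \cong \cO_Y$. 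Substituting this into the vanishing of the previous paragraph gives $H^{p}_{E}(Y, \Omega^{q}_Y(\log E) \otimes f^{\ast}\cV) = 0$ for $p + q \leq n_Y - n_S - 1$, as claimed.

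The argument is short because the genuine cohomological and duality-theoretic content is already packaged into Theorem \ref{thm1.5} and Corollary \ref{cor1.6}. The only real external ingredient is the logarithmic vanishing $R^{i}f_{\ast}\Omega^{j}_Y(\log E)(-E) = 0$ for $i + j > n_Y$, and the only delicate bookkeeping is tracking the $\cO_Y(\pm E)$ twists in the duality identification so that the twist by $-E$ built into $\cF$ is cancelled exactly; that twist computation is the step I would verify most carefully, though it is not a serious obstacle.
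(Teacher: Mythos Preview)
Your proposal is correct and follows exactly the paper's approach: set $\cF = \Omega^{n_Y - q}_Y(\log E)(-E)$, invoke Steenbrink's vanishing $R^{i}f_{\ast}\cF = 0$ for $i \geq q+1$, and apply Corollary~\ref{cor1.6}. The only difference is that you spell out the duality identification $\cF^{\vee}\otimes\Omega^{n_Y}_Y \cong \Omega^{q}_Y(\log E)$ explicitly, which the paper leaves implicit.
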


\section{Hodge Modules}
We will briefly review some of the main properties and theorems of Hodge modules. If $X$ is a smooth algebraic variety of dimension $n$, we have the following categories:
\begin{enumerate}

\item $HM(X,w) := $ category of polarizable Hodge modules of weight $w$ on $X$.

\item $HM_{Z}(X,w) :=$ category of polarizable Hodge modules of weight $w$ on $X$ with strict support on a closed irreducible subvariety $Z \subseteq X$.

\item $MHM(X):=$ category of polarizable mixed Hodge modules. 

\end{enumerate} 
Roughly speaking, a mixed Hodge module $\cN$ consists of the following data:
\begin{itemize}

\item A right regular holonomic $\cD_{X}$-module $N$ with a good filtration  $F_{\bullet}N$. The filtration $F_{\bullet}N$ is called the Hodge filtration. 

\item A weight filtration $W_{\bullet}\cN$, where $Gr^{W}_{i}\cN$ is a pure Hodge module of weight $i$ on $X$.

\item A perverse sheaf $K$ with $\Q$-coefficients on $X$  such that 
$$ K \otimes \C \simeq DR(\cN): = [N \otimes \wedge^{n} \Theta_{X} \rightarrow N \otimes \wedge^{n-1}\Theta_{X} \rightarrow \cdots \rightarrow N][n],$$ 
where $\Theta_{X}$ is the sheaf of vector fields on $X$. This complex is called the de Rham complex of the $\cD_{X}$-module $N$. 
\end{itemize} 
For a mixed Hodge module $\cN \in MHM(X)$, we will denote the above data as 
$$\cN = ((N, F_{\bullet}N), W_{\bullet}\cN, K).$$ 
The weight filtration will be omitted if $\cN$ is pure of weight $w$.

\begin{rmk}
    All variations of Hodge structures and Hodge modules are assumed to be polarizable.
\end{rmk}

\begin{rmk}The definition of mixed Hodge modules (in the analytic category) also requires information on the nearby and vanishing cycles along locally defined holomorphic functions. As well as an $admissible$ condition along locally defined holomorphic functions. For more information on this, see \cite{saito},\cite{saito2}, or \cite{schnell}. 
\end{rmk}

The first condition that is imposed on $HM(X,w)$ is the condition of strict support. For any $\cM \in HM(X,w),$ there is a decomposition
$$ \cM = \bigoplus_{Z} \cM_{Z} \quad \text{for $\cM_{Z} \in HM_{Z}(X,w)$}.$$
Therefore, to understand a general Hodge module $\cM \in HM(X,w)$, it suffices to understand its direct factors $\cM_{Z} \in HM_{Z}(X,w)$. The next theorem, which can be viewed as the $fundamental$ $theorem$ $of$ $Hodge$ $modules$, explicitly describes the category $HM_{Z}(X,w)$.

\begin{thm}\cite[Theorem 1.3]{saito4} For any closed irreducible subvariety $Z \subset X$, the restriction to sufficiently small open subvarieties of $Z$ induces an equivalence of categories
$$MH_{Z}(X,w) \simeq VHS_{gen}(Z, w - dim(Z))^{p},$$
the right-hand side is the category of polarizable variations of pure Hodge structure of weight $w - dim(Z)$ defined on a smooth, dense open subvariety $U$ of $Z$. Moreover, this equivalence of categories induces a one-to-one correspondence between polarizations of $\cM \in HM_{Z}(X,w)$ and those of the corresponding generic variation of Hodge structure.
\end{thm}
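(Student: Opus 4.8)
The final statement to prove is the fundamental theorem of Hodge modules (Saito's Theorem 1.3), asserting an equivalence between $HM_Z(X,w)$ and the category of generically defined polarizable variations of Hodge structure on $Z$ of weight $w-\dim Z$. This is a deep structural theorem of M.\ Saito, so my "proof proposal" should reflect the genuine architecture of Saito's argument rather than pretend it is elementary.

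The plan is to construct both functors and show they are mutually quasi-inverse. First I would build the functor from $HM_Z(X,w)$ to variations of Hodge structure: given $\cM \in HM_Z(X,w)$ with underlying regular holonomic $\cD_X$-module $N$ and Hodge filtration $F_\bullet N$, one restricts to the smooth locus $Z^\circ$ of $Z$ where $\cM$ is, by the defining properties of strict support together with the structure theorem for polarizable Hodge modules (the condition that generically along $Z$ the module is smooth), a filtered local system underlying a polarizable variation of Hodge structure; the perverse sheaf $K$ restricted to $Z^\circ$ becomes $L[\dim Z]$ for a local system $L$, and the filtration induces the Hodge filtration on the corresponding flat bundle. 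The polarization restricts to a polarization of the VHS. Conversely, starting from a polarizable VHS on a dense smooth open $U \subseteq Z$, one must produce a Hodge module with strict support $Z$; this is the intermediate (minimal) extension, realized concretely via Saito's theory of nearby and vanishing cycles: one extends across the boundary divisor in codimension one using the $V$-filtration of Kashiwara--Malgrange and induction on dimension, then takes the intersection-complex-type extension $j_{!*}$ at the level of filtered $\cD$-modules.

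The key steps, in order, would be: (1) set up the restriction functor and check it lands in polarizable VHS; (2) prove faithfulness — this follows because a morphism of Hodge modules with strict support $Z$ that vanishes on a dense open subset vanishes, since its kernel and cokernel would be sub/quotient Hodge modules supported on a proper closed subset, contradicting strict support; (3) prove fullness — a morphism of VHS on $U$ must extend to a morphism of the Hodge modules, which uses that $\mathrm{Hom}$ in $MHM$ can be computed after restriction because of strictness of the Hodge and weight filtrations under the relevant functors; (4) prove essential surjectivity — every polarizable VHS arises, which requires constructing the extension and verifying it satisfies Saito's axioms (admissibility, the nearby/vanishing cycle conditions, strictness). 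Step (4) is by induction on $\dim Z$, reducing via local analytic coordinates to the normal crossing case and invoking Schmid's nilpotent orbit theorem and $SL_2$-orbit theorem to control the limiting Hodge structures; the compatibility of polarizations is handled by Saito's construction of the polarization on the intermediate extension. I would cite \cite{saito}, \cite{saito2}, and \cite{saito4} for the technical core rather than reprove it.

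The main obstacle is unquestionably step (4), essential surjectivity: verifying that the minimal extension of a polarizable VHS genuinely defines an object of $HM_Z(X,w)$ — i.e.\ that it satisfies all of Saito's inductively formulated axioms including the admissibility and the compatibility with nearby and vanishing cycle functors, and that it carries a polarization. This is the heart of Saito's two long papers and cannot be shortcut; in practice the proof here would simply invoke \cite[Theorem 1.3]{saito4} (equivalently the relevant results of \cite{saito}, \cite{saito2}) as a black box, since reproving it is far outside the scope of the present paper, whose purpose is to \emph{apply} the theory. So the honest proposal is: record the restriction functor explicitly, note faithfulness and fullness via strict support and strictness of filtrations, and for essential surjectivity and the polarization correspondence cite Saito's structure theorems directly.
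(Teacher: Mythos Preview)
The paper gives no proof of this statement at all: it is stated as background in the review section on Hodge modules and attributed directly to Saito via the citation \cite[Theorem 1.3]{saito4}. Your proposal correctly anticipates this, concluding that the result should be invoked as a black box; the additional sketch of Saito's architecture (restriction functor, faithfulness/fullness via strict support, essential surjectivity via minimal extension and induction) is accurate as an outline but goes well beyond what the paper itself does, which is simply to quote the theorem.
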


\begin{defn}\label{VHS} A variation of Hodge structure $\bH = ((H,F^{\bullet}),L)$ of weight $k$ on $X$ consists of the following data:
\begin{enumerate}
\item a local system $L$ over $\mathbb{Q}$ on $X;$
\item a finite decreasing filtration $\{F^{p}\}$ of the holomorphic vector bundle  $H:= L \otimes_{\mathbb{Q}}\cO_{X}$ by holomorphic subbundles.
\end{enumerate}
These data should satisfy the following conditions:
\begin{enumerate}
    \item for each $x \in X$ the filtration $\{F^{p}(x)\}$ of $H(x) \cong (L \otimes_{\mathbb{Q}}\cO_{X}) \otimes_{\cO_{X}}k(x)$ defines a Hodge structure of weight $k,$ where $k(x)$ is the residue field of $x$;
    \item The connection $\nabla:H\rightarrow H \otimes_{\cO_{X}}\Omega^{1}_{X}$ whose sheaf of horizontal sections in $L$ satisfies the Griffiths' transversality condition
    $$\nabla(F^{p}) \subset F^{p-1}\otimes \Omega^{1}_{X}.$$
    \end{enumerate}
    \end{defn}

    \begin{rmk}
        The holomorphic vector bundle $H$ from a variation of Hodge structure is a left regular holonomic $\cD_{X}$-module with a good filtration defined by $F_{p}H:= F^{-p}H.$ Since we are working with right $\cD_{X}$-modules, we have the right $\cD_{X}$-module $H \otimes_{\cO_{X}}\Omega^{n}_{X}$ with filtration $F_{p-n}(H \otimes \Omega^{n}_{X}):= F_{p}H \otimes \Omega^{n}_{X}.$
    \end{rmk}

\begin{ex}
Assume $X$ is irreducible, and let $\Q^{H}_{X}[n]:=((\Omega^{n}_{X}, F_{\bullet}\Omega^{n}_{X}), \Q_{X}[n])$, where 
$$ F_{i}\Omega^{n}_{X} = 
\begin{cases}
\Omega^{n}_{X} & \text{if $i \geq -n$} \\
0 & \text{otherwise}.
\end{cases}$$
By Theorem 1.3, $\Q^{H}_{X}[n]$ is a pure Hodge module of weight $n$ because it is the constant variation of Hodge structure on $X$.
\end{ex}

\subsection{The Hodge Filtration}
 From the Hodge filtration $F_{\bullet}N,$ we have a natural filtration on the de Rham complex given by:
\begin{equation} 
F_{p}DR(\cN) :=[F_{p-n}N \otimes \wedge^{n}\Theta_{X} \rightarrow F_{p-n +1}N \otimes \wedge^{n-1}\Theta_{X} \rightarrow \cdots \rightarrow F_{p}N][n].
\end{equation} 
Hence there is an associated graded complex on $DR(\cN)$ give by: 
\begin{equation}\label{gradedEQ}
Gr^{F}_{p}DR(\cN) := [Gr^{F}_{p-n}N \otimes \wedge^{n}\Theta_{X} \rightarrow Gr^{F}_{p-n+1}N \otimes \wedge^{n-1}\Theta_{X} \rightarrow \cdots \rightarrow Gr^{F}_{p}N][n]. 
\end{equation}
Since the Hodge filtration is a good filtration on the underlying $\cD_{X}$-module of $\cN$, we have 
$$Gr^{F}_{p}DR(\cN) \in D^{b}_{coh}(\cO_{X}).$$

\begin{ex}
Consider the trivial Hodge module $\Q^{H}_{X}[n]$. If $0 \leq p \leq n$, then it easy to verify the equality
$$Gr^{F}_{-p}DR(\Q^{H}_{X}[n]) = \Omega^{p}_{X}[n-p].$$
\end{ex}

\subsection{Duality}
An important aspect of the theory of mixed Hodge modules, which will be used consistently in this paper, is the feature of duality. In the category of mixed Hodge modules, there is a duality functor 
$$\D: D^{b}MHM(X) \rightarrow D^{b}MHM(X)^{op},$$
where $D^{b}MHM(X)$ is the bounded derived catogory of mixed Hodge modules on $X$. If $\cN = ((N, F_{\bullet}N), W_{\bullet}\cN, K) \in MHM(X)$, then $\D(\cN) \in MHM(X)$ is called the $dual$ $Hodge$ $module$, where the underlying $\cD_{X}$-module of $\D(\cN)$ is given by the dual $\cD_{X}$-module. By the Reimann-Hilbert correspondence, the underlying perverse sheaf of $\D(\cN)$ is given by Verdier dual of $K$. 
 
By equation \ref{gradedEQ}, there are complexes $Gr^{F}_{p}DR(\cN)$ and $Gr^{F}_{p}DR(\D(\cN))$ given from the mixed Hodge modules $\cN$ and $\D(\cN)$,  respectively. The following proposition explains the relationship between these two complexes. 
 
\begin{prop}\label{dualprop}\cite{saito}
If $\cN \in MHM(X)$, then we have the quasi-isomorphism
$$ \bR \cH om_{X}(Gr^{F}_{p}DR(\cN), \omega^{\bullet}_{X}) \simeq Gr^{F}_{-p}DR(\D(\cN)) \quad  \text{for every $p \in \Z$}, $$ 
where $\D(\cN)$ is the dual Hodge module.
\end{prop}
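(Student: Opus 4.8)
The plan is to strip the proposition down to the underlying filtered right $\cD_{X}$-modules and then recognize it as an instance of Grothendieck--Serre duality on the smooth variety $X$, via the associated-graded (Rees) construction; the topological side of $\D$ is irrelevant here, since the assertion lives entirely in $D^{b}_{coh}(\cO_{X})$. Set $R := Gr^{F}\cD_{X}\cong \mathrm{Sym}_{\cO_{X}}\Theta_{X}$, a sheaf of graded $\cO_{X}$-algebras with $\Theta_{X}$ in degree one; locally it is a polynomial ring over $\cO_{X}$, hence $\cO_{X}$-flat of relative dimension $n$ and regular, so $D^{b}_{coh}(R)=\mathrm{Perf}(R)$. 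The Koszul complex
$$[\,R\otimes_{\cO_{X}}\wedge^{n}\Theta_{X}\to\cdots\to R\otimes_{\cO_{X}}\Theta_{X}\to R\,]$$
is a finite locally free graded resolution of $\cO_{X}$ over $R$ (the fibre coordinates form a regular sequence). A direct comparison with \eqref{gradedEQ} — noting that on $Gr^{F}$ the connection/bracket part of the de Rham differential drops in filtration degree and hence dies, so only contraction with the symbol survives — shows that the degreewise-$\cO_{X}$-linear complex $Gr^{F}_{p}DR(\cN)$ is, up to the shift $[n]$, exactly the internal-degree-$p$ piece of $\cO_{X}\otimes^{\bL}_{R}Gr^{F}_{\bullet}N$ (a coherent graded $R$-module, because the Hodge filtration is good). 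So the first step is to record the canonical identification
$$\bigoplus_{p\in\Z}Gr^{F}_{p}DR(\cN)\ \simeq\ \bigl(\cO_{X}\otimes^{\bL}_{R}Gr^{F}_{\bullet}N\bigr)[n]$$
of internally graded objects of $D^{b}_{coh}(\cO_{X})$, and likewise for $\D(\cN)$.

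The second, and central, step is to identify $Gr^{F}_{\bullet}\D N$. By construction, Saito's duality on filtered right $\cD_{X}$-modules is a filtered $\bR\cH om$ into $\cD_{X}$, twisted by $\omega_{X}$ as dictated by the right-module conventions and shifted by $n$; the crucial input is that for a Hodge module this operation is \emph{strict} for the Hodge filtration, so that $Gr^{F}$ commutes with it. This gives a canonical isomorphism of internally graded objects
$$Gr^{F}_{\bullet}\D N\ \simeq\ \bR\cH om_{R}\bigl(Gr^{F}_{\bullet}N,\ R\otimes_{\cO_{X}}\omega_{X}\bigr)[n],$$
where $\bR\cH om_{R}$ is taken in graded $R$-modules, so the internal grading is reversed (since $R$ is generated in degree one). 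Because $R$ is regular, $Gr^{F}_{\bullet}N$ is perfect over $R$, so the standard base-change identity for the derived dual of a perfect complex yields
$$\cO_{X}\otimes^{\bL}_{R}\bR\cH om_{R}\bigl(Gr^{F}_{\bullet}N,\ R\otimes_{\cO_{X}}\omega_{X}\bigr)\ \simeq\ \bR\cH om_{\cO_{X}}\bigl(\cO_{X}\otimes^{\bL}_{R}Gr^{F}_{\bullet}N,\ \omega_{X}\bigr).$$
Combining the three displays, carrying the two $[n]$-shifts, and using that the reversal of the internal grading sends the degree-$p$ piece to the degree-$(-p)$ piece, the degree-$(-p)$ part $Gr^{F}_{-p}DR(\D(\cN))$ becomes $\bR\cH om_{\cO_{X}}\bigl(Gr^{F}_{p}DR(\cN),\ \omega_{X}[n]\bigr)$; since $X$ is smooth of dimension $n$ one has $\omega^{\bullet}_{X}\simeq\omega_{X}[n]$, which is precisely the claimed quasi-isomorphism.

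The main obstacle is the strictness statement invoked in the second step: for an arbitrary filtered $\cD_{X}$-module it is simply false that $Gr^{F}$ commutes with the filtered dual, and it is here that the hypothesis $\cN\in MHM(X)$ — i.e. Saito's theory guaranteeing strictness of the duality on (mixed) Hodge modules — is essential; this is why the result is attributed to \cite{saito}. The remaining work is bookkeeping: pinning down the sign of the $\omega_{X}$-twist that enters through the interchange of left and right $\cD_{X}$-modules, the placement of the two $[n]$-shifts (one from the de Rham complex, one from $\omega^{\bullet}_{X}=\omega_{X}[n]$), and verifying that the graded dual over $R$ reverses the Hodge grading exactly so as to send $Gr^{F}_{p}$ to $Gr^{F}_{-p}$. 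Alternatively, one may run the entire argument geometrically on $T^{*}X=\Spec_{X}R$, where both sides become the derived restriction to the zero section of the coherent Grothendieck dual of the sheaf attached to $Gr^{F}_{\bullet}N$, and $\omega_{T^{*}X}$ is canonically trivialized by the symplectic form; this packages the shifts and the twist more cleanly, at the cost of making the internal grading slightly less transparent.
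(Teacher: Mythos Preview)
The paper does not prove this proposition at all: it is stated with the citation \cite{saito} and no argument is given, so there is no ``paper's own proof'' to compare against. What you have written is essentially the standard sketch of Saito's argument (as it appears, for instance, in \cite{saito} \S2.4 or in later expositions): pass to the associated graded over $R=Gr^{F}\cD_{X}\cong\mathrm{Sym}\,\Theta_{X}$, recognize $Gr^{F}_{\bullet}DR$ as $\cO_{X}\otimes^{\bL}_{R}(-)[n]$ via the Koszul resolution of the zero section, use that $Gr^{F}_{\bullet}N$ is perfect over $R$ (good filtration), and then invoke compatibility of $\bR\cH om_{R}$ with base change to $\cO_{X}$. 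You have correctly isolated the one genuinely non-formal step: that taking $Gr^{F}$ commutes with the filtered duality, i.e.\ that the filtered dual of a Hodge module is strict. This is a theorem of Saito and is precisely why the proposition carries the citation rather than a proof. Your alternative $T^{*}X$ packaging is also standard and equivalent. The only thing I would tighten is the bookkeeping paragraph: in the right-module conventions used here the $\omega_{X}$-twist and the two $[n]$-shifts do line up as you claim, but your display for $Gr^{F}_{\bullet}\D N$ should be checked carefully against the specific normalization of $\D$ for \emph{right} $\cD_{X}$-modules (the twist is by $\omega_{X}\otimes_{\cO_{X}}\cD_{X}$ on the appropriate side), since getting this wrong shifts the grading by $n$ rather than reversing it.
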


If $\cN \in HM(X,w)$, then a polarization gives an isomorphism $\D(\cN) \cong \cN(w)$, where 
$$\cN(w)= ((N, F_{\bullet -w}N), W_{\bullet-2w}\cN, K \otimes_{\Q} \Q(w))$$
is the Tate twist of $\cN$ by the weight $w$. Therefore, if $\cN \in HM(X,w)$, we have the following immediate corollary of Proposition \ref{dualprop}.

\begin{cor}
If $\cN \in HM(X,w)$, then there is a quasi-isomorphism
$$\bR \cH om_{X}(Gr^{F}_{p}DR(\cN), \omega^{\bullet}_{X}) \simeq Gr^{F}_{-p -w}DR(\cN) \quad \text{for every $p \in \Z$}.$$
\end{cor}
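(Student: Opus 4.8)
The plan is to derive this corollary directly from Proposition \ref{dualprop} together with the polarization isomorphism $\D(\cN) \cong \cN(w)$ recorded just before the statement. First I would invoke Proposition \ref{dualprop}, which gives the quasi-isomorphism $\bR \cH om_X(Gr^F_p DR(\cN), \omega^\bullet_X) \simeq Gr^F_{-p} DR(\D(\cN))$ for every $p \in \Z$, valid for any mixed Hodge module; a polarizable pure Hodge module $\cN \in HM(X,w)$ is in particular an object of $MHM(X)$, so this applies.

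Next I would substitute the polarization isomorphism $\D(\cN) \cong \cN(w)$ into the right-hand side, giving $Gr^F_{-p} DR(\D(\cN)) \cong Gr^F_{-p} DR(\cN(w))$. The remaining point is to identify $Gr^F_{-p} DR(\cN(w))$ with $Gr^F_{-p-w} DR(\cN)$. This is a bookkeeping computation using the definition of the Tate twist: $\cN(w)$ has underlying filtered $\cD_X$-module $(N, F_{\bullet - w} N)$, i.e. its Hodge filtration is the shift $F_k(N(w)) = F_{k-w} N$. Feeding this shift through the definition (\ref{gradedEQ}) of $Gr^F_\bullet DR(-)$ term by term — each term $Gr^F_{-p-n+i} N(w) \otimes \wedge^{n-i}\Theta_X$ becomes $Gr^F_{-p-w-n+i} N \otimes \wedge^{n-i}\Theta_X$ — shows $Gr^F_{-p} DR(\cN(w)) = Gr^F_{-p-w} DR(\cN)$ as complexes of $\cO_X$-modules, since the de Rham differential is $\cO_X$-linear on the associated graded and the Tate twist does nothing to the vector-field factors. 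Chaining the three identifications yields $\bR \cH om_X(Gr^F_p DR(\cN), \omega^\bullet_X) \simeq Gr^F_{-p-w} DR(\cN)$, which is the claim.

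The one genuinely substantive input is Proposition \ref{dualprop} itself (Saito's compatibility of the de Rham functor with duality), which we are entitled to assume. Beyond that, the only thing to be careful about is the sign and indexing convention for the Tate twist on the Hodge filtration — whether $F_{\bullet - w}$ or $F_{\bullet + w}$ — and checking it is consistent with the $-p \mapsto -p-w$ shift appearing in the statement; with the convention displayed in the excerpt ($\cN(w) = ((N, F_{\bullet - w}N), \dots)$) the shift comes out as $-p-w$ exactly as asserted. So there is no real obstacle here; the corollary is a formal consequence, and the write-up is essentially the two-line chain of quasi-isomorphisms above, with a sentence noting the index shift for the Tate twist.
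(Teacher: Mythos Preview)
Your proposal is correct and follows exactly the approach the paper intends: the paper presents this as an immediate corollary of Proposition~\ref{dualprop} together with the polarization isomorphism $\D(\cN)\cong\cN(w)$, and your argument simply spells out the Tate-twist bookkeeping $Gr^{F}_{-p}DR(\cN(w))=Gr^{F}_{-p-w}DR(\cN)$ that makes this immediate.
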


\subsection{Direct Images}
Consider a map $f: X \rightarrow X'$ between smooth algebraic varieties $X$ and $X'$. The direct image functor,
$$f_{+}: D^{b}MHM(X) \rightarrow D^{b}MHM(X'),$$
 is compatible with the direct image functor for $\cD_{X}$-modules and the direct image functor for constructible sheaves.  Furthermore, when the map $f:X \rightarrow X'$ is proper, the functor $Gr^{F}_{p}DR( \bullet)$ commutes with the direct image functor in the following way.

\begin{prop}\cite[Prop. 4.10]{ks}
Let $f: X \rightarrow X'$ be a proper map between smooth algebraic varieties. For every $p \in \Z$, one has a natural isomorphism of functors
$$\bR f_{\ast} \circ Gr^{F}_{p}DR(\bullet) \simeq Gr^{F}_{p}DR \circ f_{+}(\bullet),$$

as functors from $D^{b}(MHM(X))$ to  $D^{b}_{coh}(\cO_{X'})$. 
\end{prop}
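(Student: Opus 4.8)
The plan is to work at the level of filtered $\cD$-modules and to reduce the statement to a Koszul-type computation with associated-graded rings; this is essentially Saito's own argument. Recall that, in the right-$\cD$-module conventions used in this paper, if $(N,F_{\bullet}N)$ is the filtered $\cD_{X}$-module underlying $\cN \in MHM(X)$, then the filtered $\cD_{X'}$-complex underlying $f_{+}\cN$ is $\bR f_{\ast}$ of the filtered transfer complex $(N,F)\otimes^{\bL}_{(\cD_{X},F)}(\cD_{X \to X'},F)$, where $\cD_{X \to X'} = \cO_{X}\otimes_{f^{-1}\cO_{X'}}f^{-1}\cD_{X'}$ carries its order filtration; and recall that $Gr^{F}_{p}DR(\cN)$ is identified, with the shifts and conventions of \eqref{gradedEQ}, with the degree-$p$ graded piece of $Gr^{F}_{\bullet}N \otimes^{\bL}_{Gr^{F}\cD_{X}}\cO_{X}$, i.e. the derived restriction to the zero section of the coherent sheaf on $T^{\ast}X$ determined by $Gr^{F}_{\bullet}N$. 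Since $Gr^{F}_{p}DR(\bullet)$ and $\bR f_{\ast}$ are exact functors of (filtered, resp. ordinary) derived categories, the asserted isomorphism will follow formally once the appropriate base-change term is identified.

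Concretely, I would first record the unfiltered prototype: using $DR_{X'}(f_{+}\cN) \simeq (f_{+}\cN)\otimes^{\bL}_{\cD_{X'}}\cO_{X'}$ (up to shift) together with the transfer-module description of $f_{+}$, associativity of the derived tensor product gives
$$DR_{X'}(f_{+}\cN) \ \simeq\ \bR f_{\ast}\Bigl(N\otimes^{\bL}_{\cD_{X}}\bigl(\cD_{X \to X'}\otimes^{\bL}_{f^{-1}\cD_{X'}}f^{-1}\cO_{X'}\bigr)\Bigr)\ \simeq\ \bR f_{\ast}\bigl(N\otimes^{\bL}_{\cD_{X}}\cO_{X}\bigr)\ \simeq\ \bR f_{\ast}DR_{X}(\cN),$$
where the middle step is the standard identification $\cD_{X \to X'}\otimes^{\bL}_{f^{-1}\cD_{X'}}f^{-1}\cO_{X'}\simeq\cO_{X}$ (the $\cD$-module inverse image of the trivial module is trivial; prove it by resolving $\cO_{X'}$ with the Spencer complex of $\cD_{X'}$). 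The whole chain is filtered, so I would run the identical computation after applying $Gr^{F}$. The only new input needed is the graded analogue of the middle step: since $Gr^{F}\cD_{X \to X'} = \cO_{X}\otimes_{f^{-1}\cO_{X'}}f^{-1}\mathrm{Sym}_{\cO_{X'}}\Theta_{X'} = \mathrm{Sym}_{\cO_{X}}f^{\ast}\Theta_{X'}$, resolving $\cO_{X'}$ over $Gr^{F}\cD_{X'} = \mathrm{Sym}_{\cO_{X'}}\Theta_{X'}$ by its Koszul complex and pulling back to $X$ gives
$$Gr^{F}\cD_{X \to X'}\ \otimes^{\bL}_{Gr^{F}\cD_{X'}}\ \cO_{X'}\ \simeq\ \cO_{X},$$
because $f^{\ast}\Theta_{X'}$ is a regular sequence generating the augmentation ideal of the polynomial algebra $\mathrm{Sym}_{\cO_{X}}f^{\ast}\Theta_{X'}$. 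Substituting this into the graded associativity chain yields $Gr^{F}_{\bullet}DR_{X'}(f_{+}\cN)\simeq \bR f_{\ast}\bigl(Gr^{F}_{\bullet}N\otimes^{\bL}_{Gr^{F}\cD_{X}}\cO_{X}\bigr)\simeq \bR f_{\ast}\,Gr^{F}_{\bullet}DR_{X}(\cN)$; extracting the degree-$p$ summand, and checking that the shifts built into the two de Rham normalizations and into the transfer module cancel — the one bit of genuine bookkeeping — produces the stated isomorphism, with naturality in $\cN$ manifest from the construction. (If $f$ is projective one may instead factor it as $X\hookrightarrow X'\times\mathbb{P}^{N}\to X'$ and treat separately a closed immersion, via Kashiwara's equivalence, and a projective-bundle projection, via the relative de Rham complex in the fibre directions; but the transfer-module computation above applies to any proper $f$ without change.)

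The main obstacle is not any of the formal manipulations above but the input that legitimizes them as a statement about \emph{Hodge} modules: one must know that $f_{+}$ sends $D^{b}MHM(X)$ into $D^{b}MHM(X')$ and, crucially, that the filtered direct image is \emph{strict}, so that applying $Gr^{F}$ commutes with taking cohomology of $f_{+}\cN$ and the output genuinely lies in $D^{b}_{coh}(\cO_{X'})$ with the expected coherence. This strictness is Saito's theorem — it rests on the polarizability hypothesis and is the Hodge-theoretic heart of the decomposition theorem — and I would cite it rather than reprove it. Granting strictness, the proposition is the purely homological computation sketched above.
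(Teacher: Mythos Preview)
The paper does not prove this proposition at all: it is quoted verbatim from \cite[Prop.~4.10]{ks} and used as a black box, with no argument supplied. So there is no ``paper's own proof'' to compare against.

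Your sketch is the standard route and is essentially correct. You correctly identify the two ingredients: (i) a purely homological computation with filtered transfer modules, reducing the associated-graded compatibility to the Koszul identification $Gr^{F}\cD_{X\to X'}\otimes^{\bL}_{Gr^{F}\cD_{X'}}\cO_{X'}\simeq \cO_{X}$, and (ii) Saito's strictness theorem for proper direct images, which is what guarantees that passing to $Gr^{F}$ commutes with taking cohomology of $f_{+}\cN$ and that the resulting complexes are bounded with coherent cohomology. You are also right that (ii) is the deep input and should be cited rather than reproved. The only caution is the bookkeeping you flag yourself: in the right-$\cD$-module conventions of this paper the shifts in \eqref{gradedEQ} and in the transfer module must be tracked carefully, and the identification of $Gr^{F}_{p}DR$ with the degree-$p$ piece of $Gr^{F}_{\bullet}N\otimes^{\bL}_{Gr^{F}\cD_{X}}\cO_{X}$ involves a shift by $\dim X$ that has to match on both sides. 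None of this is a gap, but it is where errors creep in if one is not explicit.
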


\subsection{Hodge Modules on Singular Varieties}
 Whenever $X$ is singular, we assume $X$ embedds into a smooth variety $\tX$. If $$\bT i:X \arrow[r, hook] & \tX \eT$$
 is the natural inclusion map, then we have an equivalence of categories:
$$ \bT i_{+}: D^{b}MHM(X) \arrow[r] & D^{b}MHM_{X}(\tX), 
\eT$$
where $D^{b}MHM_{X}(\tX)$ is the full subcategory of $D^{b}MHM(\tX)$ whose objects have cohomological supports in $X$ \cite{saito2}. For any $\cN \in D^{b}MHM(X)$, the complex $Gr^{F}_{p}DR(i_{X+}\cN) \in D^{b}_{coh}(\cO_{\tX})$ is actually a well defined complex in $D^{b}_{coh}(\cO_X),$ and is independent of the chosen embedding $i:X \hookrightarrow \tX$. Therefore, we have the quasi-isomorphism
$$\bR \Gamma(\tX,  Gr^{F}_{p}DR(i_{X+}\cN)) \simeq \bR \Gamma(X,  Gr^{F}_{p}DR(i_{X+}\cN)).$$

\section{ Kodaira-Saito Type Vanishing for Quasi-Projective Varieties}

We will keep the same setting as \ref{set1}. Assume $f:Y \rightarrow X$ is a resolution of singularities and $f|_{V}:V \rightarrow U$ is an isomorphism. Recall that $U$ is assumed to be smooth in this situation.  Since $n_{X} = n_{Y}$, we will denote $n= n_{X}=n_{Y}$. Further, assume that there exists a smooth ambient variety $\tX$ of $X$ with an embedding $i_{X}: X \hookrightarrow \tX$. We will denote $\tf = i_{X} \circ f.$

\subsection*{A Generalization of Steenbrink's Vanishing Theorem}

Given $\cM \in MHM(V)$ there are two distinguished extensions
$$\cM(!E) := \rho_{!}\cM \quad \text{and} \quad \cM(\ast E) := \rho_{+}\cM.$$
Using the assumption that $f|_{V}:V \rightarrow U$ is an isomorphism, we first prove a lemma concerning the cohomology of $f_{+}\cM(* E)$ and $f_{+}\cM(!E)$, which was inspired  \cite[Lemma 9.3]{ks}.

\begin{lemma}\label{lemma2.1}
If $\cM \in MHM(V)$, then 
$$\tau^{\leq -1}f_{+}\cM(\ast E) = 0 \quad \text{and} \quad  \tau^{\geq 1}f_{+}\cM(!E) = 0. $$
\end{lemma}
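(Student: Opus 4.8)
The plan is to reduce the assertion to the one-sided $t$-exactness of the pushforward $Rj_*$ and the extension-by-zero $j_!$ along the open immersion $j\colon U\hookrightarrow X$, after transporting everything from $Y$ down to $X$ via $f$.

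First I would use the commutative square of Setting~\ref{set1}: as morphisms $V\to X$ one has $f\circ\rho = j\circ f|_V$. By functoriality of the direct image for mixed Hodge modules,
$$f_+\cM(\ast E) = f_+\rho_+\cM \simeq (f\circ\rho)_+\cM \simeq j_+\bigl((f|_V)_+\cM\bigr).$$
Because $f$ is proper we have $f_+ = f_!$, and $f_!\circ\rho_! \simeq (f\circ\rho)_!$, so likewise
$$f_+\cM(!E) = f_+\rho_!\cM \simeq f_!\rho_!\cM \simeq (f\circ\rho)_!\cM \simeq j_!\bigl((f|_V)_!\cM\bigr).$$
Since $f|_V\colon V\to U$ is an isomorphism, $(f|_V)_+ = (f|_V)_!$ is a $t$-exact equivalence of categories, so $\cM' := (f|_V)_+\cM$ is a single mixed Hodge module on $U$, placed in cohomological degree $0$. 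Thus $f_+\cM(\ast E)\simeq j_+\cM'$ and $f_+\cM(!E)\simeq j_!\cM'$.

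Next I would invoke the standard fact that for an open immersion $j$ the restriction $j^* = j^!$ is $t$-exact for the MHM (perverse) $t$-structure, and hence its left adjoint $j_!$ is right $t$-exact and its right adjoint $j_+ = Rj_*$ is left $t$-exact. Applied to the single mixed Hodge module $\cM'$ this gives $j_!\cM'\in D^{\leq 0}MHM(X)$ and $j_+\cM'\in D^{\geq 0}MHM(X)$, i.e. $\tau^{\geq 1}j_!\cM' = 0$ and $\tau^{\leq -1}j_+\cM' = 0$. Combined with the identifications above this yields exactly $\tau^{\leq -1}f_+\cM(\ast E) = 0$ and $\tau^{\geq 1}f_+\cM(!E) = 0$.

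All the content sits in these formal compatibilities, so the only thing needing care is to quote them correctly in Saito's formalism: $(g\circ f)_+\simeq g_+\circ f_+$ and $(g\circ f)_!\simeq g_!\circ f_!$, the equality $f_+ = f_!$ for proper $f$, and the fact that an isomorphism induces a $t$-exact equivalence — all part of the basic package of \cite{saito},\cite{saito2}. I expect the only (minor) subtlety to be the bookkeeping of which extension goes with which functor: $\cM(\ast E) = \rho_+\cM$ must be matched with the \emph{left}-$t$-exact functor $Rj_*$, and $\cM(!E) = \rho_!\cM$ with the \emph{right}-$t$-exact functor $j_!$, the latter being where properness of $f$ enters. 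If one prefers to avoid the adjunction argument, the vanishings $\tau^{\geq 1}j_!K = 0$ and $\tau^{\leq -1}Rj_*K = 0$ for a perverse sheaf $K$ on $U$ can instead be read off from the support/cosupport characterization of the perverse $t$-structure and transported back through the Riemann--Hilbert correspondence.
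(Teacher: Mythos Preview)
Your proof is correct and follows essentially the same route as the paper: both identify $f_{+}\cM(\ast E)$ with $j_{+}$ applied to a single object on $U$ (the paper does this at the level of the underlying perverse sheaf via the rat functor, you do it directly in $D^{b}MHM$) and then invoke the left $t$-exactness of $j_{+}$, which the paper establishes by exactly the adjunction argument you sketch at the end. The only cosmetic difference is that the paper uses duality to reduce the $!$-statement to the $\ast$-statement rather than handling it separately via the right $t$-exactness of $j_{!}$.
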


\begin{proof}
 By duality, it suffices to show $\tau^{\leq -1}f_{+}\cM(\ast E) = 0$. Furthermore, it suffices to show the underlying constructible complex $^{p}\tau^{\leq -1}\text{rat}(f_{+}\cM(\ast E)) = 0$. The constructible complex for $f_{+}\cM(\ast E)$ is given by
$$\bR f_{\ast}\bR \rho_{\ast}L \cong \bR j_{\ast}f|_{V \ast}L,$$
where $L$ is the underlying perverse sheaf for $\cM$. Abusing notation, we will let $f|_{V \ast}L = L$. 

If $K \in D^{b}_{c}(\Q_{X})$ is any constructible complex, then
$$ Hom_{D^{b}_{c}(\Q_{X})}(K, \bR j_{\ast}L) \cong Hom_{D^{b}_{c}(\Q_{U})}(j^{-1}K, L).$$
If we take $K$ to be $^{p}\tau^{\leq -1} \bR j_{\ast}L$, then we have the isomorphisms 
$$ Hom_{^{p}D^{\leq -1}}(^{p}\tau^{\leq -1} \bR j_{\ast}L, \hspace{.01in}^{p}\tau^{\leq -1} \bR j_{\ast}L) \cong  Hom_{D^{b}_{c}(\Q_{X})}(^{p}\tau^{\leq -1} \bR j_{\ast}L, \bR j_{\ast}L)$$
$$ \cong Hom_{D^{b}_{c}(\Q_{U})}(j^{-1}(^{p}\tau^{\leq -1} \bR j_{\ast}L), L).$$
However, the complex $j^{-1}(^{p}\tau^{\leq -1} \bR j_{\ast}L) =  (^{p}\tau^{\geq -1}L) = 0$. Therefore,
$$Hom_{^{p}D^{\leq -1}}(^{p}\tau^{\leq -1} \bR j_{\ast}L, ^{p}\tau^{\leq -1} \bR j_{\ast}L) \cong Hom_{D^{b}_{c}(\Q_{U})}(j^{-1}(^{p}\tau^{\leq -1} \bR j_{\ast}L), L) = 0.$$
This forces $^{p}\tau^{\leq -1} \bR j_{\ast}L = 0$. 
\end{proof}

\begin{thm}\label{thm2.2} Assume $\cM \in MHM(V)$ and $X$ is an irreducible projective variety. If $\cL$ is an ample invertible sheaf on $X$, then
$$H^{i}(X, Gr^{F}_{p}DR(\tf_{+}\cM(\ast E)) \otimes \cL^{-1}) = 0 \quad \text{ for all $i< 0$ and $p \in \Z$} $$
and
$$H^{i}(X, Gr^{F}_{p}DR(\tf_{+}\cM(! E)) \otimes \cL) = 0 \quad \text{ for all $i> 0$ and $p \in \Z$}. $$

\end{thm}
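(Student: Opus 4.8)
The plan is to combine Lemma \ref{lemma2.1} with Saito's vanishing theorem for a single mixed Hodge module, glued together by a short truncation dévissage. First I would set $\cN_{\ast} := \tf_{+}\cM(\ast E)$ and $\cN_{!} := \tf_{+}\cM(! E)$, both objects of $D^{b}MHM(\tX)$ with cohomological support on $X$; by the discussion of Hodge modules on singular varieties in Section~2 the complexes $Gr^{F}_{p}DR(\cN_{\ast})$ and $Gr^{F}_{p}DR(\cN_{!})$ are well-defined in $D^{b}_{coh}(\cO_{X})$ and their hypercohomology may be computed on $X$, and since $\cL^{\pm 1}$ are line bundles on $X$, tensoring with them is exact, so the groups in the statement make sense. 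Applying the exact functor $i_{X+}$ to Lemma \ref{lemma2.1} I get $\cH^{k}(\cN_{\ast}) = 0$ for $k<0$ and $\cH^{k}(\cN_{!}) = 0$ for $k>0$; that is, $\cN_{\ast}$ is concentrated in cohomological degrees $\geq 0$ and $\cN_{!}$ in degrees $\leq 0$. The external input is Saito's vanishing theorem \cite{saito2}: for a single $\cN\in MHM(X)$, an ample $\cL$ on $X$, and every $p\in\Z$, one has $H^{\ell}(X, Gr^{F}_{p}DR(\cN)\otimes\cL)=0$ for $\ell>0$ and $H^{\ell}(X, Gr^{F}_{p}DR(\cN)\otimes\cL^{-1})=0$ for $\ell<0$.

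For the first assertion I would induct on the largest integer $m$ with $\cH^{m}(\cN_{\ast})\neq 0$, which is $\geq 0$ (the case $\cN_{\ast}=0$ being vacuous). If $m=0$ then $\cN_{\ast}\simeq\cH^{0}(\cN_{\ast})$ and the claim is immediate from Saito vanishing. If $m\geq 1$, I apply the triangulated functor $\bR\Gamma\bigl(X, Gr^{F}_{p}DR(-)\otimes\cL^{-1}\bigr)$ to the truncation triangle $\tau^{\leq m-1}\cN_{\ast}\to\cN_{\ast}\to\cH^{m}(\cN_{\ast})[-m]\xrightarrow{+1}$, which produces a long exact sequence relating $H^{i}\bigl(X,Gr^{F}_{p}DR(\tau^{\leq m-1}\cN_{\ast})\otimes\cL^{-1}\bigr)$, the group $H^{i}\bigl(X,Gr^{F}_{p}DR(\cN_{\ast})\otimes\cL^{-1}\bigr)$, and $H^{i-m}\bigl(X,Gr^{F}_{p}DR(\cH^{m}(\cN_{\ast}))\otimes\cL^{-1}\bigr)$. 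For $i<0$ the first group vanishes by the inductive hypothesis (the top nonzero cohomology of $\tau^{\leq m-1}\cN_{\ast}$ lies in a degree $<m$ but still $\geq 0$) and the third vanishes by Saito vanishing because $i-m<0$; hence $H^{i}(X,Gr^{F}_{p}DR(\cN_{\ast})\otimes\cL^{-1})=0$ for all $i<0$ and all $p$.

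For the second assertion I would pass to the dual. Since $\tf$ is proper, $\tf_{+}$ commutes with $\D$, and $\D\rho_{!}\simeq\rho_{+}\D$, so $\D\cN_{!}\simeq\tf_{+}\bigl((\D\cM)(\ast E)\bigr)$ with $\D\cM\in MHM(V)$. Combining Proposition \ref{dualprop} (which extends to $D^{b}MHM$ by truncation triangles), the projection formula, and Grothendieck--Serre duality on the projective variety $X$, one gets $H^{i}\bigl(X,Gr^{F}_{p}DR(\cN_{!})\otimes\cL\bigr)^{\vee}\cong H^{-i}\bigl(X,Gr^{F}_{-p}DR(\tf_{+}((\D\cM)(\ast E)))\otimes\cL^{-1}\bigr)$, and for $i>0$ the right-hand side vanishes by the first assertion applied to $\D\cM$; hence $H^{i}(X,Gr^{F}_{p}DR(\cN_{!})\otimes\cL)=0$ for $i>0$. (Equivalently one may rerun the dévissage of the previous paragraph verbatim, using $\tau^{\geq 1}\cN_{!}=0$, the bundle $\cL$ in place of $\cL^{-1}$, and the positive-degree half of Saito vanishing.)

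The one genuinely essential point — and the step I expect to need the most care — is the interaction between Lemma \ref{lemma2.1} and Saito's vanishing theorem: the latter controls the hypercohomology of only a \emph{single} mixed Hodge module, and only in one half-line of degrees, so in the inductive step the shift $[-m]$ must stay inside that half-line, which it does precisely because $\cN_{\ast}$ (resp.\ $\cN_{!}$) is concentrated in degrees $\geq 0$ (resp.\ $\leq 0$); without this concentration the statement is already false, e.g.\ for $\Q^{H}_{X}[n+1]$. Everything else — the truncation bookkeeping, exactness of $i_{X+}$ and of $-\otimes\cL^{\pm 1}$, and the independence of the graded de Rham complexes from the chosen embedding $X\hookrightarrow\tX$ — is routine given the material recalled in Section~2.
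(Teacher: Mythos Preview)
Your proof is correct and follows essentially the same strategy as the paper: reduce to the first assertion by duality, use Lemma~\ref{lemma2.1} to see that $\tf_{+}\cM(\ast E)$ is concentrated in nonnegative cohomological degrees, and then run a truncation d\'evissage applying Kodaira--Saito vanishing to each $\cH^{k}$. The only cosmetic difference is that you peel off the top cohomology via $\tau^{\leq m-1}\to\cN_{\ast}\to\cH^{m}(\cN_{\ast})[-m]$ and induct downward on $m$, whereas the paper peels off from the bottom via $\cH^{0}\to\cN_{\ast}\to\tau^{\geq 1}$, then $\cH^{1}[-1]\to\tau^{\geq 1}\to\tau^{\geq 2}$, and so on until $\tau^{\geq n}=0$; either direction works and the essential content is identical.
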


\begin{proof}
 By duality, it suffices to prove the first statement. From the previous lemma, there is an exact triangle
$$\begin{tikzcd}
\cH^{0}(\tf_{+}\cM(\ast E)) \arrow[r] & \tf_{+}\cM(\ast E) \arrow[r] & \tau^{\geq 1} \tf_{+}\cM(\ast E)) \arrow[r, "+1"] & \hfill. 
\end{tikzcd}$$
 By Kodaira-Saito vanishing \cite[Prop. 2.33]{saito2},  $H^{i}(X, Gr^{F}_{p}DR(\cH^{0}(\tf_{+}\cM(\ast E)) \otimes \cL^{-1})= 0$ for $i < 0$. By the distinguished triangle above, it suffices to show 
$$H^{i}(X,  Gr^{F}_{p}DR(\tau^{\geq 1} \tf_{+}\cM(\ast E))\otimes \cL^{-1}) = 0 \quad \text{for $i < 0$}. $$
Now consider the distinguished triangle
$$\begin{tikzcd}
\cH^{1}(\tf_{+}\cM(\ast E))[-1] \arrow[r] & \tau^{\geq 1}\tf_{+}\cM(\ast E) \arrow[r] & \tau^{\geq 2}\tf_{+}\cM(\ast E) \arrow[r, "+1"] & \hfill.
\end{tikzcd}$$
By Kodaira-Saito vanishing, $H^{i}(X, Gr^{F}_{p}DR(\cH^{1}(\tf_{+}\cM(\ast E))[-1] \otimes \cL^{-1})= 0$ for $i < 0$.
So, it suffices to show
$$H^{i}(X,  Gr^{F}_{p}DR(\tau^{\geq 2} \tf_{+}\cM(\ast E))) \otimes \cL^{-1}) = 0 \quad \text{for $i < 0$}.$$
Continue in this pattern. Since $\tf_{+}\cM(\ast E)$ is bounded, there exists $i_{0} \in \N$ such that $\tau^{\geq i_{0}}\tf_{+}\cM(\ast E) = 0.$
For dimension reasons we actually have $\tau^{\geq n} \tf_{+}\cM(\ast E) = 0$ \cite[Prop. 8.1.42]{htt}. Therefore, 
$$\tau^{\geq n-1} \tf_{+}\cM(\ast E) \cong \cH^{n-1}(\tf_{+}\cM(\ast E))[-n +1]$$ 
and
$$H^{i}(X,  Gr^{F}_{p}DR(\tau^{\geq n-1} \tf_{+}\cM(\ast E)) \otimes \cL^{-1}) = 0 \quad \text{for $i< 0$}.$$
The theorem is held by exhaustion.

\end{proof}

\begin{cor}\label{cor2.3}
 If $\cM \in MHM(V)$ and $X$ is a projective variety, then
$$ R^{i}f_{\ast}Gr^{F}_{p}DR(\cM(! E)) = 0 \quad \text{for $i >0$ and $p \in \Z$}.$$ 
\end{cor}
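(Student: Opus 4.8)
The plan is to reduce the claim to the hypercohomology vanishing of Theorem \ref{thm2.2} and then to pass from vanishing of hypercohomology to vanishing of cohomology sheaves by a standard Serre-vanishing argument.

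First I would identify $\bR f_{\ast}Gr^{F}_{p}DR(\cM(!E))$ with the complex appearing in Theorem \ref{thm2.2}. Applying \cite[Prop. 4.10]{ks} to the proper map $\tf = i_{X}\circ f$ between the smooth varieties $Y$ and $\tX$ gives a natural isomorphism $\bR\tf_{\ast}Gr^{F}_{p}DR(\cM(!E)) \simeq Gr^{F}_{p}DR(\tf_{+}\cM(!E))$ in $D^{b}_{coh}(\cO_{\tX})$. Since $i_{X}$ is a closed immersion, $\bR\tf_{\ast} = (i_{X})_{\ast}\circ\bR f_{\ast}$ and $(i_{X})_{\ast}$ is exact and fully faithful; writing $\mathcal{C}^{\bullet} := \bR f_{\ast}Gr^{F}_{p}DR(\cM(!E)) \in D^{b}_{coh}(\cO_{X})$, we obtain $(i_{X})_{\ast}\mathcal{C}^{\bullet} \simeq Gr^{F}_{p}DR(\tf_{+}\cM(!E))$, so $R^{i}f_{\ast}Gr^{F}_{p}DR(\cM(!E)) = \cH^{i}(\mathcal{C}^{\bullet})$ and, as recalled in the subsection on Hodge modules on singular varieties, $\mathcal{C}^{\bullet}$ is exactly the complex on $X$ whose twisted hypercohomology is controlled by Theorem \ref{thm2.2}. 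It therefore suffices to prove $\cH^{i}(\mathcal{C}^{\bullet}) = 0$ for $i > 0$.

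Next I would fix an ample invertible sheaf $\cL$ on $X$. By Theorem \ref{thm2.2}, $\HH^{i}(X, \mathcal{C}^{\bullet}\otimes\cL^{m}) = 0$ for every $i > 0$ and every $m\ge 1$, since $\cL^{m}$ is again ample. Because $\mathcal{C}^{\bullet}$ is bounded, Serre's vanishing and global generation theorems provide an $m_{0}$ such that for all $m\ge m_{0}$ and all $b$ one has $H^{a}(X, \cH^{b}(\mathcal{C}^{\bullet})\otimes\cL^{m}) = 0$ for $a>0$ and $\cH^{b}(\mathcal{C}^{\bullet})\otimes\cL^{m}$ is globally generated. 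For such $m$ the hypercohomology spectral sequence $E_{2}^{a,b} = H^{a}(X, \cH^{b}(\mathcal{C}^{\bullet})\otimes\cL^{m}) \Rightarrow \HH^{a+b}(X, \mathcal{C}^{\bullet}\otimes\cL^{m})$ degenerates at $E_{2}$, so $H^{0}(X, \cH^{i}(\mathcal{C}^{\bullet})\otimes\cL^{m}) = \HH^{i}(X, \mathcal{C}^{\bullet}\otimes\cL^{m}) = 0$ for $i > 0$; a globally generated coherent sheaf with no global sections vanishes, hence $\cH^{i}(\mathcal{C}^{\bullet})\otimes\cL^{m} = 0$ and so $\cH^{i}(\mathcal{C}^{\bullet}) = 0$ for $i>0$. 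When $X$ is reducible one runs the identical argument, noting that the only input stated for irreducible $X$ is Theorem \ref{thm2.2}, whose proof uses only Kodaira--Saito vanishing and a dimension bound on the perverse cohomological amplitude of $\tf_{+}$, neither of which requires irreducibility.

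I expect the main obstacle to be the bookkeeping in the second paragraph: verifying carefully that $\bR f_{\ast}Gr^{F}_{p}DR(\cM(!E))$, a priori only a complex on the possibly singular $X$, coincides with the restriction to $X$ of the canonically defined complex $Gr^{F}_{p}DR(\tf_{+}\cM(!E))$ on $\tX$, and that Theorem \ref{thm2.2} is literally a statement about that complex. Once this identification is in place, the passage from ``hypercohomology vanishes after a sufficiently positive twist'' to ``the cohomology sheaves vanish'' is the routine Serre-vanishing argument sketched above, and presents no real difficulty.
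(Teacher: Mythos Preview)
Your proposal is correct and follows essentially the same route as the paper: identify $(i_{X})_{\ast}\bR f_{\ast}Gr^{F}_{p}DR(\cM(!E))$ with $Gr^{F}_{p}DR(\tf_{+}\cM(!E))$, twist by a high power of an ample line bundle, use Serre vanishing to collapse the hypercohomology spectral sequence, apply Theorem~\ref{thm2.2} to kill the abutment, and conclude from global generation. The only cosmetic difference is that the paper disposes of the reducible case at the outset by passing to irreducible components, whereas you note at the end that the argument goes through unchanged.
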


\begin{proof}
Without loss of generality, we may assume $X$ is irreducible. Since $\tf: Y \rightarrow \tX$ is proper, there is a quasi-isomorphism
$$i_{X*}\bR f_{\ast}Gr^{F}_{p}DR(\cM(!E)) \simeq Gr^{F}_{p}DR(\tf_{+}\cM(! E)).$$
From this quasi-isomorphism, there is a spectral sequence
$$H^{j}(X, R^{i}f_{\ast}Gr^{F}_{p}(\cM(!E)) \otimes \cL^{N} ) \Rightarrow H^{i +j}(X, Gr^{F}_{p}DR(\tf_{+}\cM(! E)) \otimes \cL^{N}),$$
where $\cL$ is an ample invertible sheaf on $X$ and $N \in \Z$. When $N \gg 0$ and $j > 0$ we obtain 
$$H^{j}(X, R^{i}f_{\ast}Gr^{F}_{p}(\cM(! E)) \otimes \cL^{N} ) = 0.$$ Therefore,
$$\Gamma(X,  R^{i}f_{\ast}Gr^{F}_{p}(\cM(! E)) \otimes \cL^{N} ) \cong   H^{i}(X, Gr^{F}_{p}DR(\tf_{+}\cM(! E)) \otimes \cL^{N}).$$
By Theorem \ref{thm2.2}, 
$$\Gamma(X,  R^{i}f_{\ast}Gr^{F}_{p}(\cM(! E)) \otimes \cL^{N} ) \cong H^{i}(X, Gr^{F}_{p}DR(\tf_{+}\cM(! E)) \otimes \cL^{N}) = 0 \quad  \text{for $i >0$}.$$
 For $N \gg 0$ the sheaf $R^{i}f_{\ast}Gr^{F}_{p}(\cM(! E)) \otimes \cL^{N} $ is globally generated. From the isomorphism above we conclude $R^{i}f_{\ast}Gr^{F}_{p}DR(\cM(! E)) = 0$ for $i > 0$. 
\end{proof}

If we choose $\cM$ to be the trivial Hodge module $\Q^{H}_{V}[n],$ then we receive Steenbrink's vanishing theorem.

\begin{cor}\label{cor2.4}\cite{steenbrink}
Assume $X$ is an irreducible projective variety and $\cL$ is an ample invertible sheaf on $X$. If $f:Y \rightarrow X$ is a strong log resolution of $S,$ then 

$$H^{q}(Y, \Omega^{p}_{Y} (\log E)\otimes f^{\ast}\cL^{-1}) = 0 \quad \text{for $p +q< n,$}$$

$$H^{q}(Y, \Omega^{p}_{Y}(\log E)(-E) \otimes f^{\ast}\cL) = 0 \quad \text{for $p +q > n,$}$$
and
$$R^{q}f_{\ast}\Omega_{Y}^{p}(\log E)(-E) = 0 \quad \text{for $p +q >n$}.$$
\end{cor}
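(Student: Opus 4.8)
The plan is to derive all three vanishing statements from Theorem \ref{thm2.2} and Corollary \ref{cor2.3} by specializing the mixed Hodge module $\cM$ to the trivial Hodge module $\Q^{H}_{V}[n]$ and unwinding the graded de Rham complexes of its two extensions $\cM(!E)$ and $\cM(*E)$. The first observation I would record is that for the strong log resolution $f:Y\to X$, with $E$ a simple normal crossing divisor, the mixed Hodge module $\cM(*E) = \rho_{+}\Q^{H}_{V}[n]$ has underlying filtered $\cD_Y$-module the log-de Rham complex associated to $\Omega^{\bullet}_Y(\log E)$, and $\cM(!E) = \rho_{!}\Q^{H}_{V}[n]$ is its dual, corresponding to $\Omega^{\bullet}_Y(\log E)(-E)$. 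Concretely, the key input (which is standard, cf. the computation preceding Corollary \ref{cor1.7} and Saito's theory) is the identification
$$Gr^{F}_{-p}DR(\cM(*E)) \simeq \Omega^{p}_Y(\log E)[n-p] \quad\text{and}\quad Gr^{F}_{-p}DR(\cM(!E)) \simeq \Omega^{p}_Y(\log E)(-E)[n-p].$$
This reduces the cohomology $H^{q}(Y, \Omega^{p}_Y(\log E)\otimes f^{*}\cL^{-1})$ to $H^{q+p-n}$ of the appropriate graded de Rham complex, and similarly for the other cases.

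Granting these identifications, the first statement follows from the first vanishing in Theorem \ref{thm2.2}: since $\tf$ is proper and $X$ is projective, $H^{i}(X, Gr^{F}_{-p}DR(\tf_{+}\cM(*E))\otimes \cL^{-1}) = 0$ for $i<0$; pushing forward along $\tf = i_X \circ f$ and using that $f$ here is the identity on the smooth variety $Y = \tX$ (or more carefully, using $\bR\tf_{*}Gr^{F}_{-p}DR(\cM(*E)) \simeq Gr^{F}_{-p}DR(\tf_{+}\cM(*E))$ together with the projection formula for the ample $\cL$), the group $H^{i}(X, Gr^{F}_{-p}DR(\tf_{+}\cM(*E))\otimes\cL^{-1})$ computes $H^{i+p-n}(Y,\Omega^{p}_Y(\log E)\otimes f^{*}\cL^{-1})$ up to the shift, so $i < 0$ translates to $p + q < n$ with $q = i + p - n + (n-p) = $ the de Rham degree. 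The second statement is the dual: apply the second vanishing in Theorem \ref{thm2.2} to $\cM(!E)$ with $\cL$ instead of $\cL^{-1}$, and use $Gr^{F}_{-p}DR(\cM(!E)) \simeq \Omega^{p}_Y(\log E)(-E)[n-p]$; the condition $i>0$ becomes $p+q>n$. The third statement is exactly Corollary \ref{cor2.3} applied to $\cM = \Q^{H}_{V}[n]$, after identifying $Gr^{F}_{-p}DR(\cM(!E))$ with $\Omega^{p}_Y(\log E)(-E)[n-p]$ and reindexing: $R^{i}f_{*}Gr^{F}_{-p}DR(\cM(!E)) = 0$ for $i>0$ means $R^{q}f_{*}\Omega^{p}_Y(\log E)(-E) = 0$ for $q - (n-p) > 0$, i.e. $p+q>n$.

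The main obstacle — really the only nontrivial point — is justifying the identifications of $Gr^{F}_{\bullet}DR$ of the extensions $\cM(*E)$ and $\cM(!E)$ with the (twisted) logarithmic de Rham complexes, including getting the Hodge filtration indexing exactly right so that the numerical bounds $p+q<n$, $p+q>n$ come out as stated. This is where one invokes Saito's description of the filtered $\cD$-module underlying $j_{+}$ and $j_{!}$ of a variation of Hodge structure along a normal crossing divisor (the filtration on $\Omega^{\bullet}_Y(\log E)$ being the "stupid" filtration $\sigma$ up to shift), together with the compatibility of $Gr^{F}_pDR$ with duality from Proposition \ref{dualprop}. Everything else is bookkeeping: matching the cohomological shift $[n-p]$ against the de Rham degree $q$, and invoking properness of $\tf$ plus the projection formula to move the ample twist past $\tf_{*}$. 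I would present the shift-matching once, carefully, for the $*$-case and then simply say "by the analogous argument, using the second half of Theorem \ref{thm2.2}" for the $!$-case and "this is Corollary \ref{cor2.3} with $\cM = \Q^{H}_{V}[n]$" for the last.
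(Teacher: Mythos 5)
Your proposal follows essentially the same route as the paper: specialize to $\cM=\Q^{H}_{V}[n]$, use Saito's identification of $Gr^{F}_{-p}DR$ of the two extensions with $\Omega^{p}_{Y}(\log E)[n-p]$ and $\Omega^{p}_{Y}(\log E)(-E)[n-p]$, and then apply Theorem \ref{thm2.2} for the two cohomological vanishings and Corollary \ref{cor2.3} for the higher direct image statement, with the same shift bookkeeping. (The parenthetical remark that ``$f$ is the identity on $Y=\tX$'' is a slip, but the careful version you give immediately after, via $\bR\tf_{\ast}$ and the projection formula, is exactly what the paper does.)
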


\begin{proof}
 If $\cM = \Q^{H}_{V}[n]$ is the trivial Hodge module on $V$, then there is a filtered quasi-isomorphism 
$$DR(\rho_{+}\Q^{H}_{V}[n]) \simeq (\Omega^{\bullet}_{Y}(\log E),F_{\bullet})[n],$$
where the filtration on $\Omega^{\bullet}_{Y}(\log E)$ is defined by $F_{p} = \sigma_{\geq -p}$ \cite[Prop 3.11]{saito2}. For $0 \leq p \leq n$ there is a quasi-isomorphism
$$Gr^{F}_{-p} DR(\rho_{+}\Q^{H}_{V}[n]) \simeq \Omega^{p}_{Y}(\log E)[n - p].$$
The map $\tf: Y \rightarrow \tX$ gives us
$$i_{X*}\bR f_{\ast}\Omega^{p}_{Y}(\log E)[n- p] \simeq Gr^{F}_{-p} DR(\tf_{+}\rho_{+}\Q^{H}_{V}[n]).$$
 If $\cL$ is an  ample invertible sheaf, we have the quasi-isomorphisms 
 $$\bR \Gamma(X,Gr^{F}_{-p} DR(\tf_{+}\rho_{+}\Q^{H}_{V}[n]) \otimes \cL^{-1}) \simeq  \bR \Gamma(X,\bR f_{\ast}\Omega^{p}_{Y}(\log E)[n - p] \otimes \cL^{-1}) $$
 $$\simeq \bR \Gamma(Y, \Omega^{p}_{Y}(\log E)[n - p] \otimes f^{\ast}\cL^{-1}).$$ 
 Applying Theorem \ref{thm2.2} we obtain
$$H^{i}(Y, \Omega^{p}_{Y} (\log E)[n -p]\otimes f^{\ast}\cL^{-1}) = 0 \quad \text{for $i < 0$}$$
and
$$H^{i}(Y, \Omega^{n-p}_{Y}(\log E)(-E)[p] \otimes f^{\ast}\cL) = 0 \quad \text{for $i > 0$}.$$
By Corollary \ref{cor2.3},
$$R^{i}f_{\ast}\Omega_{Y}^{n -p}(\log E)(-E)[p] = 0 \quad \text{for $i >0$}.$$
Adjusting for the shifts, we have the desired result.
\end{proof}

\subsection*{Kodaira-Saito Type Vanishing for Quasi-Projective Varieties}

Using Corollary \ref{cor2.3}, we can apply Theorem \ref{thm1.5} to prove the vanishing of local cohomology for specific mixed Hodge modules. We still assume $f:Y \rightarrow X$ is a resolution of singularities, $U$ is smooth, and $f|_{V}:V \rightarrow U$ is an isomorphism. For the rest of this section, assume $X$ is an irreducible projective variety. Using the isomorphism $f|_{V}:V \rightarrow U$, we will abuse notation to let $\cM$ denote a mixed Hodge module on $V$ or $U$.

\begin{prop}\label{prop2.5}
If $\cV$ is locally free on $X$, then
$$ H^{p}_{E}(Y, Gr^{F}_{q}DR(\cM(\ast E)) \otimes f^{\ast}\cV) = 0 \quad \text{for $p \leq -n_{S} - 1$}.$$

\end{prop}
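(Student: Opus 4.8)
The plan is to derive the vanishing from Theorem~\ref{thm1.5} by Grothendieck--Serre duality on the smooth variety $Y$: after dualizing against $\omega^{\bullet}_{Y}$, the local cohomology of the $\ast$-extension $\cM(\ast E)$ turns into an $Ext_{E}$-group built out of the $!$-extension of the \emph{dual} Hodge module, and the higher direct images of the latter vanish by Corollary~\ref{cor2.3} --- which is precisely the hypothesis Theorem~\ref{thm1.5} requires.

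Fix $\cM\in MHM(V)$ and a locally free sheaf $\cV$ on $X$, and let $\D\cM\in MHM(V)$ be the dual Hodge module. Set
\[
\cF := Gr^{F}_{-q}DR\big((\D\cM)(!E)\big) = Gr^{F}_{-q}DR\big(\rho_{!}(\D\cM)\big) \in D^{b}_{coh}(\cO_{Y}).
\]
By Corollary~\ref{cor2.3} applied to $\D\cM$ (recall that $X$ is projective in this section), $R^{i}f_{\ast}\cF = 0$ for every $i\geq 1$. Since $X$ is proper, Theorem~\ref{thm1.5} applied to $\cF$ and the locally free sheaf $\cV^{\vee}$ then gives
\[
Ext^{p}_{E}\big(\cF\otimes f^{\ast}\cV^{\vee},\,\omega^{\bullet}_{Y}\big) = 0 \qquad\text{for } p\leq -n_{S}-1 .
\]

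It remains to identify this $Ext$-group with the local cohomology group in the statement. Because $Y$ is smooth, Proposition~\ref{dualprop} gives $\bR\cH om_{Y}(\cF,\omega^{\bullet}_{Y})\simeq Gr^{F}_{q}DR\big(\D((\D\cM)(!E))\big)$; and by the compatibility of the duality functor with the functor $\rho_{!}$ together with $\D\circ\D\simeq\mathrm{id}$, one has $\D((\D\cM)(!E)) = \D\,\rho_{!}\,\D\cM \simeq \rho_{+}\cM = \cM(\ast E)$, so that $\bR\cH om_{Y}(\cF,\omega^{\bullet}_{Y})\simeq Gr^{F}_{q}DR(\cM(\ast E))$. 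Tensoring with the locally free sheaf $f^{\ast}\cV$ and using the standard isomorphism $\bR\cH om_{Y}(-,\omega^{\bullet}_{Y})\otimes f^{\ast}\cV\simeq\bR\cH om_{Y}(-\otimes f^{\ast}\cV^{\vee},\omega^{\bullet}_{Y})$ (valid since $f^{\ast}\cV$ is locally free) yields
\[
\bR\cH om_{Y}\big(\cF\otimes f^{\ast}\cV^{\vee},\,\omega^{\bullet}_{Y}\big) \simeq Gr^{F}_{q}DR(\cM(\ast E))\otimes f^{\ast}\cV .
\]
Applying $\bR\underline{\Gamma_{E}}$ and $\bR\Gamma(Y,-)$ and taking $\cH^{p}$, the left-hand side computes $Ext^{p}_{E}(\cF\otimes f^{\ast}\cV^{\vee},\omega^{\bullet}_{Y})$ and the right-hand side computes $H^{p}_{E}(Y, Gr^{F}_{q}DR(\cM(\ast E))\otimes f^{\ast}\cV)$; combining this with the vanishing above finishes the proof.

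The only step needing real care is this middle identification: matching the graded index $-q\leftrightarrow q$ under Proposition~\ref{dualprop}, exchanging the $\ast$- and $!$-extensions under $\D$, and tracking the $\cV\leftrightarrow\cV^{\vee}$ twist through the tensor--Hom formula. All of these are routine in Saito's formalism, so beyond Theorem~\ref{thm1.5} and Corollary~\ref{cor2.3} no new ingredient is required.
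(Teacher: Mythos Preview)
Your proof is correct and follows essentially the same approach as the paper's: set $\cF = Gr^{F}_{-q}DR(\D(\cM)(!E))$, apply Corollary~\ref{cor2.3} to get $R^{i}f_{\ast}\cF=0$ for $i\geq 1$, then invoke Theorem~\ref{thm1.5} with $\cV^{\vee}$ and identify the resulting $Ext_{E}$-group with the desired local cohomology via Proposition~\ref{dualprop} and the duality $\D\rho_{!}\D\cM\simeq\rho_{+}\cM$. The paper's proof is just more terse, asserting the key identification $H^{p}_{E}(Y, Gr^{F}_{q}DR(\cM(\ast E)) \otimes f^{\ast}\cV) \cong Ext^{p}_{E}(Gr^{F}_{-q}DR(\D(\cM)(!E)) \otimes f^{\ast}\cV^{\vee}, \omega^{\bullet}_{Y})$ in a single line without unpacking the duality steps.
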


\begin{proof}
Let $\D(\cM)$ denote the dual of $\cM$. By Corollary \ref{cor2.3}, $ R^{i}f_{\ast}Gr^{F}_{-q}DR(\D(\cM)(! E)) = 0$ for $i \geq 1$. So we may apply Theorem \ref{thm1.5} with $q =1$. For $p \leq -n_{S} -1$, we obtain
$$H^{p}_{E}(Y, Gr^{F}_{q}DR(\cM(\ast E)) \otimes f^{\ast}\cV) \cong Ext^{p}_{E}(Gr^{F}_{-q}DR(\D(\cM)(!E)) \otimes f^{\ast}\cV^{\vee}, \omega^{\bullet}_{Y}) = 0. $$
\end{proof}

\begin{rmk}
If  $f:Y \rightarrow X$ is a strong log resolution and we let $\cM(\ast E) = \rho_{+}\Q^{H}_{V}[n]$ in the previous proposition, then we obtain Arapura's vanishing theorem \cite[Thm. 1]{arapura}
\end{rmk}

\begin{cor}\label{cor2.7}
If  $\cV$ is locally free on $X$, then the natural map
$$ H^{p}(Y, Gr^{F}_{q}DR(\cM(\ast E)) \otimes f^{\ast}\cV) \rightarrow  H^{p}(U, Gr^{F}_{q}DR(\cM) \otimes j^{\ast}\cV)$$
is an isomorphism for $p \leq -n_{S} -2$ and injective for $p = -n_{S} -1$.
\end{cor}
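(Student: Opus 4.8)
The plan is to run the excision long exact sequence for cohomology with supports in $E$ against the coherent complex $\cG := Gr^{F}_{q}DR(\cM(\ast E)) \otimes f^{\ast}\cV \in D^{b}_{coh}(\cO_{Y})$, and then to annihilate the local cohomology terms in the relevant range using Proposition \ref{prop2.5}. Concretely, applying $\bR\Gamma(Y,-)$ to the triangle $\bR\underline{\Gamma_{E}}\cG \to \cG \to \bR\rho_{\ast}\rho^{\ast}\cG \xrightarrow{+1}$ produces the long exact sequence
$$\cdots \to H^{p}_{E}(Y,\cG) \to H^{p}(Y,\cG) \to H^{p}(V,\rho^{\ast}\cG) \to H^{p+1}_{E}(Y,\cG) \to \cdots,$$
whose middle arrow is exactly the restriction map under consideration.

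First I would identify the third term with the target in the statement. By adjunction for the open immersion $\rho$ one has $\rho^{\ast}\rho_{+}\cM \simeq \cM$, and since the formation of $Gr^{F}_{p}DR$ is local on $Y$ this gives $\rho^{\ast}Gr^{F}_{q}DR(\cM(\ast E)) \simeq Gr^{F}_{q}DR(\cM)$; moreover $\rho^{\ast}f^{\ast}\cV \cong (f|_{V})^{\ast}j^{\ast}\cV$. Transporting along the isomorphism $f|_{V}\colon V \xrightarrow{\sim} U$ (and using the standing abuse of notation by which $\cM$ denotes the mixed Hodge module on $V$ or on $U$ interchangeably) then yields a canonical isomorphism $H^{p}(V,\rho^{\ast}\cG) \cong H^{p}(U, Gr^{F}_{q}DR(\cM)\otimes j^{\ast}\cV)$, compatible with restriction from $H^{p}(Y,\cG)$.

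It then remains to feed in Proposition \ref{prop2.5}, which gives $H^{p}_{E}(Y,\cG) = 0$ for $p \leq -n_{S}-1$. For $p \leq -n_{S}-2$ both $H^{p}_{E}(Y,\cG)$ and $H^{p+1}_{E}(Y,\cG)$ vanish, so the restriction map is an isomorphism; for $p = -n_{S}-1$ only $H^{p}_{E}(Y,\cG)$ is known to vanish, which forces injectivity, since the cokernel of the restriction map injects into $H^{-n_{S}}_{E}(Y,\cG)$, about which nothing is claimed. This is the whole argument: once Proposition \ref{prop2.5} is in hand the statement is purely formal, so there is no genuine obstacle here — the only points deserving a line of justification are the locality of $Gr^{F}_{p}DR$ under open restriction and the compatibility of the various natural maps with the identification $V \cong U$.
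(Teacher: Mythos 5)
Your argument is correct and is essentially the paper's own proof: the paper likewise applies the long exact sequence of cohomology with supports in $E$ for $Gr^{F}_{q}DR(\cM(\ast E)) \otimes f^{\ast}\cV$ and kills the local cohomology terms with Proposition \ref{prop2.5}. Your extra care in identifying $H^{p}(V,\rho^{\ast}\cG)$ with $H^{p}(U, Gr^{F}_{q}DR(\cM)\otimes j^{\ast}\cV)$ via $\rho^{\ast}\cM(\ast E)\simeq\cM$ and $f|_{V}$ just makes explicit what the paper leaves implicit.
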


\begin{proof}
This follows from the previous proposition and the long exact sequence
$$
\cdots \rightarrow H^{p}_{E}(Y, Gr^{F}_{q}DR(\cM(\ast E)) \otimes f^{\ast}\cV) \rightarrow  H^{p}(Y, Gr^{F}_{q}DR(\cM(\ast E)) \otimes f^{\ast}\cV) -$$ 
$$\rightarrow  H^{p}(V, Gr^{F}_{q}DR(\cM) \otimes \rho^{\ast}f^{\ast}\cV) \rightarrow H^{p+1}_{E}(Y, Gr^{F}_{q}DR(\cM(\ast E)) \otimes f^{\ast}\cV) \rightarrow \cdots.$$

\end{proof}

Combining Theorem  \ref{thm2.2} and Corollary \ref{cor2.7}, we obtain a Kodaira-Saito type vanishing theorem for quasi-projective varieties.

\begin{thm}\label{thm2.9}
If $\cL$ is an ample invertible sheaf on $X$, then
$$H^{p}(U, Gr^{F}_{q}DR(\cM) \otimes j^{\ast}\cL^{-1}) = 0 \quad \text{for $p \leq -n_{S} -2$}.$$

\end{thm}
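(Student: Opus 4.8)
The plan is to combine the Kodaira–Saito type vanishing over the compactification (Theorem~\ref{thm2.2}) with the isomorphism/injectivity statement of Corollary~\ref{cor2.7}, applied with the locally free sheaf $\cV = \cL^{-1}$. First I would set $\cV = \cL^{-1}$ in Corollary~\ref{cor2.7}, which gives that the natural restriction map
$$H^{p}(Y, Gr^{F}_{q}DR(\cM(\ast E)) \otimes f^{\ast}\cL^{-1}) \rightarrow H^{p}(U, Gr^{F}_{q}DR(\cM) \otimes j^{\ast}\cL^{-1})$$
is an isomorphism for $p \leq -n_{S} -2$. So it suffices to prove the vanishing of the left-hand side in that range.

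For the left-hand side, I would pass to the smooth ambient variety $\tX$ via $\tf = i_{X} \circ f$, using the quasi-isomorphism $i_{X*}\bR f_{\ast}Gr^{F}_{q}DR(\cM(\ast E)) \simeq Gr^{F}_{q}DR(\tf_{+}\cM(\ast E))$ from the properness of $\tf$ together with the fact that $Gr^{F}_{q}DR$ commutes with proper direct images. Since $i_{X}$ is a closed immersion, $\bR\Gamma(\tX, -)$ of the pushforward computes $\bR\Gamma(X, -)$, and by the projection formula
$$\bR\Gamma(X, \bR f_{\ast}Gr^{F}_{q}DR(\cM(\ast E)) \otimes \cL^{-1}) \simeq \bR\Gamma(X, Gr^{F}_{q}DR(\tf_{+}\cM(\ast E)) \otimes \cL^{-1}),$$
whose cohomology is exactly $H^{p}(X, Gr^{F}_{q}DR(\tf_{+}\cM(\ast E)) \otimes \cL^{-1})$. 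By Theorem~\ref{thm2.2}, this vanishes for all $p < 0$, in particular for $p \leq -n_{S} - 2$ (note $n_{S} \geq 0$, so $-n_{S} - 2 < 0$). Hence $H^{p}(Y, Gr^{F}_{q}DR(\cM(\ast E)) \otimes f^{\ast}\cL^{-1}) = 0$ for $p \leq -n_{S} - 2$, and transporting through the isomorphism of Corollary~\ref{cor2.7} yields $H^{p}(U, Gr^{F}_{q}DR(\cM) \otimes j^{\ast}\cL^{-1}) = 0$ for $p \leq -n_{S} - 2$, which is the claim.

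The only point requiring care — and the main obstacle, though it is minor — is bookkeeping the compatibility of the various identifications: that the projection formula applies (it does, since $\cL^{-1}$ is locally free of finite rank), that $Gr^{F}_{q}DR$ genuinely commutes with $\bR f_{\ast}$ and with the twist by $f^{\ast}\cL^{-1}$ (this twist is by an $\cO_{X}$-module, so it passes through the de Rham functor on the Hodge module side as a twist by the pullback, and the graded pieces of the filtration are unaffected), and that the restriction map used here is the same natural map appearing in Corollary~\ref{cor2.7}. Once these identifications are lined up, the vanishing is immediate from Theorem~\ref{thm2.2}. No new input beyond the results already established is needed.
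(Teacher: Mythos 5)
Your argument is correct and is exactly the paper's intended proof: the paper derives Theorem \ref{thm2.9} precisely by combining Theorem \ref{thm2.2} (vanishing of $H^{p}(X, Gr^{F}_{q}DR(\tf_{+}\cM(\ast E))\otimes\cL^{-1})$ for $p<0$, transported to $Y$ via properness of $\tf$ and the projection formula) with Corollary \ref{cor2.7} applied to $\cV=\cL^{-1}$. Your bookkeeping of the identifications (and the observation that $-n_{S}-2<0$ even with the convention $\dim\emptyset=-1$) is sound, so nothing further is needed.
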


If we let $\cM = \Q^{H}_{U}[n]$ in the previous theorem, then we obtain Arapura's result.
\begin{cor}\cite{arapura}
 If $\cL$ is an ample invertible sheaf on $X$, then 
$$H^{p}(U, \Omega^{q}_{U} \otimes j^{\ast}\cL^{-1}) = 0\quad \text{for $p +q \leq n -n_{S} -2$}.$$
\end{cor}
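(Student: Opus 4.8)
The plan is to obtain this as a direct specialization of Theorem \ref{thm2.9}, taking $\cM$ to be the trivial Hodge module. Since $f|_{V}\colon V \to U$ is an isomorphism, we may regard the trivial Hodge module $\Q^{H}_{U}[n] = ((\Omega^{n}_{U}, F_{\bullet}\Omega^{n}_{U}), \Q_{U}[n])$ — which is pure of weight $n$, hence lies in $MHM(V)$ under this identification — as an admissible choice of $\cM$ in Theorem \ref{thm2.9}. Then the only work is to translate the conclusion of that theorem, phrased in terms of $Gr^{F}_{q}DR(\cM)$, back into the classical language of $\Omega^{q}_{U}$, which is exactly what the computation of $Gr^{F}_{\bullet}DR(\Q^{H}_{X}[n])$ in the example following \eqref{gradedEQ} provides.

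Concretely, I would first recall the filtered identification
$$Gr^{F}_{-q}DR(\Q^{H}_{U}[n]) \simeq \Omega^{q}_{U}[n - q] \qquad (0 \le q \le n),$$
noting that both sides vanish for $q < 0$ or $q > n$ (as $U$ is smooth of dimension $n$), so the asserted vanishing is automatic outside the range $0 \le q \le n$. Tensoring with the line bundle $j^{\ast}\cL^{-1}$ commutes with the shift, so for every $p \in \Z$ one has
$$H^{p}(U, Gr^{F}_{-q}DR(\Q^{H}_{U}[n]) \otimes j^{\ast}\cL^{-1}) \cong H^{p + n - q}(U, \Omega^{q}_{U} \otimes j^{\ast}\cL^{-1}).$$
Applying Theorem \ref{thm2.9} with this $\cM$, and with the graded index there taken to be $-q$, the left-hand side vanishes whenever $p \le -n_{S} - 2$. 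Setting $m = p + n - q$, the inequality $p \le -n_{S} - 2$ is equivalent to $m + q \le n - n_{S} - 2$, so $H^{m}(U, \Omega^{q}_{U} \otimes j^{\ast}\cL^{-1}) = 0$ whenever $m + q \le n - n_{S} - 2$; renaming $m$ to $p$ gives the statement.

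There is no substantive obstacle here: the corollary is a pure reformulation of Theorem \ref{thm2.9}, and the only thing requiring care is the bookkeeping of the cohomological shift $[n-q]$ in $Gr^{F}_{-q}DR(\Q^{H}_{U}[n])$ and the attendant change of degree. It is perhaps worth remarking that running the same argument with $\cM(\ast E) = \rho_{+}\Q^{H}_{V}[n]$ in place of $\Q^{H}_{U}[n]$, together with the log identification $Gr^{F}_{-q}DR(\rho_{+}\Q^{H}_{V}[n]) \simeq \Omega^{q}_{Y}(\log E)[n-q]$ and Corollary \ref{cor2.7}, recovers the corresponding statement on a strong log resolution and hence Arapura's original Kodaira--Nakano type vanishing.
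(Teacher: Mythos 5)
Your proposal is correct and is essentially the paper's own argument: the paper likewise deduces the corollary by taking $\cM = \Q^{H}_{U}[n]$ in Theorem \ref{thm2.9} and using the identification $Gr^{F}_{-q}DR(\Q^{H}_{U}[n]) \simeq \Omega^{q}_{U}[n-q]$, with the same shift bookkeeping converting $p \leq -n_{S}-2$ into $p+q \leq n-n_{S}-2$. Your index translation is accurate, so nothing further is needed.
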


\section{Pure Hodge Structures on Quasi-Projective Varieties}

We will keep the same setting as \ref{set1}, and $X$ will be an irreducible projective variety. We will assume the map $f:Y \rightarrow X$ is a strong log resolution of $S$. We will denote $n= n_{X}=n_{Y}$ and we will use the convention $\dim S = -1$ if $S = \emptyset$. 

\begin{lemma}\label{lemma3.1}
Let $Z$ be a smooth irreducible variety of dimension $m$ and $D$ a simple normal crossing divisor on $Z$. If $\cN \in HM_{Z}(Z,w)$ is a variation of Hodge structure $ ((H, F^{\bullet}), L)$ on $Z$, then there is a quasi-isomorphism
$$Gr^{F}_{p}DR(\varrho_{!}\varrho^{\ast}\cN) \simeq Gr^{F}_{p}DR(\varrho_{+}\varrho^{\ast}\cN)\otimes \cO_{Z}(-D),$$
where $\varrho:Z \backslash D \hookrightarrow Z$ is the natural map. 
\end{lemma}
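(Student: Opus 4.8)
The plan is to identify both sides with explicit logarithmic de Rham complexes. The crucial point — and the only place the hypothesis on $\cN$ is used — is that $\cN$ extends to a variation of Hodge structure on \emph{all} of $Z$, so the flat bundle underlying $\varrho^{\ast}\cN$ has trivial local monodromy around every branch of $D$. Hence that flat bundle has a unique Deligne canonical extension to $Z$, namely $H$ itself, carrying a logarithmic connection with vanishing residues along $D$; and for the $!$-extension the relevant lattice is obtained by the familiar twist $H\mapsto H\otimes_{\cO_Z}\cO_Z(-D)$ (valid because $D$ is reduced and the monodromy vanishes), which is the generalization of the passage from $\Omega^{\bullet}_Z(\log D)$ to $\Omega^{\bullet}_Z(\log D)(-D)$.

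Next I would invoke Saito's computation of the filtered de Rham complexes of the two extensions in terms of these lattices \cite{saito},\cite{saito2}; the case $\cN=\Q^{H}_Z[m]$ is the filtered quasi-isomorphism $DR(\varrho_{+}\Q^{H}_{Z\setminus D}[m])\simeq(\Omega^{\bullet}_Z(\log D),\sigma)[m]$ recalled in the proof of Corollary \ref{cor2.4}. Writing $H$ for the $\cO_Z$-module underlying $\cN$, with Hodge filtration $F_{\bullet}H=F^{-\bullet}H$, this gives filtered quasi-isomorphisms in $D^{b}_{coh}(\cO_Z)$
$$\big(DR(\varrho_{+}\varrho^{\ast}\cN),\,F\big)\simeq\big(\Omega^{\bullet}_Z(\log D)\otimes_{\cO_Z}H,\,F\big)[m],$$
$$\big(DR(\varrho_{!}\varrho^{\ast}\cN),\,F\big)\simeq\big(\Omega^{\bullet}_Z(\log D)\otimes_{\cO_Z}(H\otimes_{\cO_Z}\cO_Z(-D)),\,F\big)[m],$$
where in both cases the filtration is the convolution of $F_{\bullet}$ on the coefficient lattice with the stupid filtration $\sigma$ on $\Omega^{\bullet}_Z(\log D)$, with matching indices on the two sides, the invertible sheaf $\cO_Z(-D)$ carrying no filtration of its own.

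Now I would pass to $Gr^{F}_{p}$. In the $\varrho_{+}$-case the term of $Gr^{F}_{p}DR(\varrho_{+}\varrho^{\ast}\cN)$ in cohomological degree $j-m$ is $\Omega^{j}_Z(\log D)\otimes_{\cO_Z}Gr^{F}_{p+j}H$, and in the $\varrho_{!}$-case it is the same sheaf tensored with $\cO_Z(-D)$; the differentials of $Gr^{F}_{p}DR$ are $\cO_Z$-linear (wedging with logarithmic $1$-forms, together with the $\cO_Z$-linear Kodaira--Spencer maps induced on $Gr^{F}H$ by the connection). Therefore tensoring the whole complex with the invertible sheaf $\cO_Z(-D)$ — an exact operation that commutes with $Gr^{F}$ and with these differentials — carries $Gr^{F}_{p}DR(\varrho_{+}\varrho^{\ast}\cN)$ isomorphically onto $Gr^{F}_{p}DR(\varrho_{!}\varrho^{\ast}\cN)$, which is the assertion. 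As a consistency check, applying $\bR \cH om_Z(-,\omega^{\bullet}_Z)$ to the claimed isomorphism and using Proposition \ref{dualprop} together with $\D(\cN)\cong\cN(w)$ and $\D(\varrho_{+}\varrho^{\ast}\cN)\cong(\varrho_{!}\varrho^{\ast}\cN)(w)$ returns the same statement with $(p,\cO_Z(-D))$ replaced by $(-p-w,\cO_Z(D))$, so the identity is compatible with duality.

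The main obstacle is the second step: the content lies entirely in Saito's explicit logarithmic description of $DR$ of the $\ast$- and $!$-extensions and, above all, in matching the two Deligne lattices so that they differ by precisely $\cO_Z(-D)$. Here the reduction to vanishing monodromy — forced by the hypothesis that $\cN$ is defined on all of $Z$ — is exactly what makes both lattices unique and this matching transparent; once the two filtered complexes are written down, the remainder of the argument is formal.
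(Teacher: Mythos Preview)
Your argument is correct and matches the paper's proof: both invoke Saito's \cite[Prop.~3.11]{saito2} to identify the filtered de Rham complexes of $\varrho_{+}\varrho^{\ast}\cN$ and $\varrho_{!}\varrho^{\ast}\cN$ with $(\Omega^{\bullet}_{Z}(\log D),F)\otimes(H,F)[m]$ and its twist by $\cO_Z(-D)$, then observe that tensoring by the invertible sheaf $\cO_Z(-D)$ commutes with $Gr^{F}_{p}$. Your discussion of trivial monodromy makes explicit why the Deligne lattices $\cH^{\geq 0}$ and $\cH^{>0}$ are $H$ and $H\otimes\cO_Z(-D)$ respectively---a point the paper leaves implicit in its citation of Saito---but the structure of the proof is the same.
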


\begin{proof}

The de Rham complex for $\varrho_{+}\varrho^{\ast}\cN$ is given by
$$(\Omega^{\bullet}_{Z}(\log D), F_{\bullet}) \otimes (H, F_{\bullet})[n],$$
where the filtration on $\Omega^{\bullet}_{Z}(\log D)$ is the ``stupid" filtration and the filtration on $H$ is given by $F_{p}H= F^{-p }H$ \cite[Prop. 3.11]{saito2}. Similarly, by \cite[Prop. 3.11]{saito2}, the de Rham complex of $\rho_{!}\rho^{*}\cN$ is given by
$$(\Omega^{\bullet}_{Z}(\log D), F_{\bullet}) \otimes (H \otimes \cO_{Z}(-D), F_{\bullet})[n].$$
The sheaf $\cO_{Z}(-D)$ is compatible with the filtration, and the lemma holds.

\end{proof}

\begin{prop}\label{prop3.2}
Let $\cM \in HM_{U}(U,w)$ be a variation of Hodge structure on $U$. If $D$ is a sufficiently general hyperplane of $X,$ and  $\epsilon: U \cap D \hookrightarrow U$ is the natural map, then the map
$$ H^{p}(U, Gr^{F}_{q}DR(\cM)) \rightarrow H^{p}(U \cap D, Gr^{F}_{q}DR(\epsilon^{\ast}\cM))$$
is bijective for $p < -n_{S} -2$ and injective for $p = -n_{S} -2$. 
\end{prop}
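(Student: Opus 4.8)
The plan is to prove this as a Lefschetz-type statement on $U$, using the Kodaira--Saito vanishing theorem (Theorem \ref{thm2.9}) twice — on $U$ itself and on the hyperplane section $U\cap D$ — together with the conormal exact sequence. I would first choose $D$ general enough that, by Bertini's theorem, $D$ is an irreducible projective variety, $U\cap D$ is smooth and irreducible, $S\cap D$ is closed of dimension $n_{S}-1$, and the restriction $\cM':=\cM|_{U\cap D}$ is again a polarized variation of Hodge structure on $U\cap D$; I will also use that $\cL:=\cO_{X}(D)$ is ample and that $\cL':=\cO_{X}(D)|_{D}$ is ample on $D$.

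Write $\cG:=Gr^{F}_{q}DR(\cM)$. Since $\cM$ underlies a variation of Hodge structure on the smooth variety $U$, the complex $\cG$ is a bounded complex of locally free $\cO_{U}$-modules (its terms have the form $\Omega^{j}_{U}\otimes_{\cO_{U}}Gr^{F}_{\bullet}H$). Hence tensoring the ideal-sheaf sequence of the Cartier divisor $U\cap D\subset U$ with $\cG$ is exact term by term and gives a distinguished triangle $\cG\otimes j^{*}\cO_{X}(-D)\to\cG\to\cG|_{U\cap D}\xrightarrow{+1}$. Applying Theorem \ref{thm2.9} with $\cL=\cO_{X}(D)$ gives $H^{p}(U,\cG\otimes j^{*}\cL^{-1})=0$ for $p\le -n_{S}-2$, and the long exact sequence of this triangle shows that the restriction map $H^{p}(U,\cG)\to H^{p}(U\cap D,\cG|_{U\cap D})$ is bijective for $p\le -n_{S}-3$ and injective for $p=-n_{S}-2$.

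It remains to compare $H^{p}(U\cap D,\cG|_{U\cap D})$ with $H^{p}(U\cap D,Gr^{F}_{q}DR(\epsilon^{*}\cM))$. Here I would invoke the conormal exact sequence $0\to\cN^{\vee}_{U\cap D/U}\otimes\Omega^{\bullet-1}_{U\cap D}\to\Omega^{\bullet}_{U}|_{U\cap D}\to\Omega^{\bullet}_{U\cap D}\to 0$, which is a decomposition of complexes compatible with the de Rham differential; after tensoring with $Gr^{F}_{\bullet}H$ and passing to graded de Rham complexes it exhibits $\cG|_{U\cap D}$ as an extension
$$0\ \longrightarrow\ Gr^{F}_{q+1}DR(\cM')\otimes\cN^{\vee}_{U\cap D/U}\ \longrightarrow\ \cG|_{U\cap D}\ \longrightarrow\ Gr^{F}_{q}DR(\epsilon^{*}\cM)\ \longrightarrow\ 0,$$
where the quotient is the graded de Rham complex of the (non-characteristic, hence shifted) inverse image $\epsilon^{*}\cM$, and $\cN^{\vee}_{U\cap D/U}\cong j_{D}^{*}\cL'^{-1}$ with $j_{D}\colon U\cap D\hookrightarrow D$. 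Since $\cL'$ is ample on $D$ and $S\cap D$ has dimension $n_{S}-1$, Theorem \ref{thm2.9} applied on $D$ gives $H^{p}(U\cap D,Gr^{F}_{q+1}DR(\cM')\otimes j_{D}^{*}\cL'^{-1})=0$ for $p\le -n_{S}-1$, so the long exact sequence of the displayed extension yields an isomorphism $H^{p}(U\cap D,\cG|_{U\cap D})\xrightarrow{\sim}H^{p}(U\cap D,Gr^{F}_{q}DR(\epsilon^{*}\cM))$ for $p\le -n_{S}-2$. Composing the two comparisons, and checking that the composite is the natural base-change morphism, proves the proposition.

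The step I expect to be the main obstacle is the identification in the previous paragraph: one must check carefully that restricting the graded de Rham complex of $\cM$ along the conormal filtration reproduces exactly $Gr^{F}_{q}DR(\epsilon^{*}\cM)$ as the top quotient and a conormal twist of $Gr^{F}_{q+1}DR(\cM')$ as the sub, keeping track of both the cohomological shift and the Hodge-index shift (the passage from $q$ to $q+1$, and the extra shift hidden in the pullback $\epsilon^{*}$), and that all the comparison maps involved are the natural ones. The case $n_{S}=0$ (a general $D$ then misses $S$) is covered by the same argument with the convention $\dim\emptyset=-1$, and when $\dim X\le 1$ the asserted range is vacuous.
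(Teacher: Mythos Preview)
Your approach is correct and reaches the same conclusion, but the paper's argument is shorter and avoids the conormal bookkeeping you flag as the main obstacle. The paper works entirely at the mixed-Hodge-module level: from the localization triangle $\varrho_{!}\varrho^{\ast}\cM\to\cM\to\epsilon_{+}\epsilon^{\ast}\cM$ (with $\varrho:U\setminus(U\cap D)\hookrightarrow U$), it suffices to show $H^{p}(U,Gr^{F}_{q}DR(\varrho_{!}\varrho^{\ast}\cM))=0$ for $p\le -n_{S}-2$. Lemma~\ref{lemma3.1} identifies this complex with $Gr^{F}_{q}DR(\varrho_{+}\varrho^{\ast}\cM)\otimes\cO_{U}(-D|_{U})$, and a \emph{single} application of Theorem~\ref{thm2.9} (to the mixed Hodge module $\varrho_{+}\varrho^{\ast}\cM$) finishes the proof.

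By contrast, you split this into two coherent-level triangles (ideal sheaf, then conormal) and apply Theorem~\ref{thm2.9} twice, once on $U$ and once on $U\cap D$. Your route has the virtue of being explicit and of not invoking the $!$-extension formalism or Lemma~\ref{lemma3.1}; the price is the delicate verification that the conormal filtration on $\cG|_{U\cap D}$ is a filtration \emph{by subcomplexes} (this uses that the differential on $Gr^{F}_{q}DR(\cM)$ is the $\cO$-linear Higgs field, not the de Rham differential---your phrase ``compatible with the de Rham differential'' is imprecise here) and that its graded pieces are $Gr^{F}_{q}DR(\epsilon^{\ast}\cM)$ and $Gr^{F}_{q+1}DR(\cM')\otimes\cN^{\vee}$ with the correct shifts. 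These identifications are correct (the shift $q\mapsto q+1$ and the hidden $[1]$ in $\epsilon^{\ast}\cM\simeq\cM'[1]$ match up), so your argument goes through, but the paper's one-triangle approach sidesteps all of it.
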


\begin{proof}

For general $D$,  $U \cap D$ is a smooth divisor on $U$. If $\varrho: U \backslash (U \cap D) \hookrightarrow U$ is the natural map, then by the previous lemma and Theorem \ref{thm2.9},
$$ H^{p}(U, Gr^{p}_{q}DR(\varrho_{!}\varrho^{\ast}\cM)) =  H^{p}(U, Gr^{p}_{q}DR(\varrho_{+}\varrho^{\ast}\cM) \otimes \cO_{U}(-D|_{U}))) = 0 \quad \text{whenever $p \leq -n_{S} -2$}.$$
 The proposition follows from the distinguished triangle 
$$\begin{tikzcd}
\varrho_{!}\varrho^{\ast}\cM \arrow[r] & \cM \arrow[r] & \epsilon_{+}\epsilon^{\ast}\cM \arrow[r, "+1"] & \hfill
\end{tikzcd}$$
and the resulting long exact sequence
$$\cdots \rightarrow H^{p}(U, Gr^{p}_{q}DR(\varrho_{!}\varrho^{\ast}\cM)) 
\rightarrow H^{p}(U,Gr^{F}_{q}DR(\cM))-$$
$$ \rightarrow H^{p}(U \cap D, Gr^{F}_{q}DR(\epsilon^{*}\cM)) \rightarrow  H^{p+1}(U, Gr^{p}_{q}DR(\varrho_{!}\varrho^{\ast}\cM)) \rightarrow \cdots.$$ 
\end{proof}

Let $\cM \in HM_{U}(U,w)$ be a variation of Hodge structure $((H, F^{\bullet}),L)$ on $U$.  Using the isomorphism $f\vert_{V}: V \rightarrow U,$ let's regard $\cM$ as a Hodge module on $V$ for the moment. Let $\cH^{\geq 0}_{Y}$ be the lattice of Deligne's regular singular meromorphic extension of $H$ such that the eigenvalues of $res \nabla$ along the irreducible components $E_{i}$ ($1 \leq i \leq r$) are contained in $[0, 1).$ The complex
$$ (\Omega^{\bullet}_{Y}(\log E), F_{\bullet}) \otimes (\cH^{\geq 0}_{Y},F_{\bullet})[n]$$ 
gives a filtered resolution of $DR(\cM(\ast E)) \simeq \rho_{\ast}DR(\cM)$ \cite[Prop 3.11]{saito2}. From Saito's original paper \cite{saito}, the Hodge filtration on $H^{i}(U, DR(\cM))$ is given by 
$$F_{p}^{H}H^{i}(U,DR(\cM))= Im[H^{i}(Y,F_{p}DR(\cM(\ast E))) \rightarrow H^{i}(U,DR(\cM))].$$
Since $Y$ is projective, the filtration is strict, and we obtain
$$Gr_{-p}^{F}H^{i}(U,DR(\cM)) = H^{i }(Y,Gr^{F}_{-p}DR(\cM(\ast E))) $$
and
$$H^{i}(U,DR(\cM)) = \bigoplus_{p \in \mathbb{Z}} H^{i }(Y, Gr^{F}_{p}DR( \cM(\ast E))).$$
In particular we have $F^{H}_{-p}H^{i}(U,DR(\cM)) = \displaystyle \bigoplus_{l \leq -p } H^{i }(Y, Gr^{F}_{l}DR(\cM(\ast E))).$ We may also define a second filtration on $H^{i}(U,DR(\cM))$ by
$$F_{p}^{DR}H^{i}(U, DR(\cM)):= Im[H^{i}(U, F_{p}DR(\cM)) \rightarrow H^{i}(U,DR(\cM))].$$
This filtration gives us a spectral sequence $E^{p,q}_{1} = H^{p +q}(U, Gr^{F}_{-p}DR(\cM))\Rightarrow H^{p +q}(U,DR(\cM)).$ However, this filtration may not be strict. Therefore, $E^{p,q}_{\infty} \cong Gr^{F}_{-p}H^{p+q}(U, DR(\cM)).$ The next proposition describes the relationship between the two filtrations. 

\begin{prop}\label{prop3.3}
For any $i$, $F^{H}_{p}H^{i}(U,DR(\cM)) \subseteq F^{DR}_{p}H^{i}(U,DR(\cM))$. If $i \leq  -n_{S} -2$ then:
\begin{enumerate}
\item These filtrations on $H^{i}(U,DR(\cM))$ coincide. 
\item For any $p$, $E^{p,i-p}_{1} = E^{p, i-p}_{\infty}$. 
\end{enumerate}
\end{prop}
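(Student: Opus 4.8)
The inclusion $F^{H}_{p}H^{i}(U,DR(\cM)) \subseteq F^{DR}_{p}H^{i}(U,DR(\cM))$ is formal: both are defined as images of cohomology of the filtered piece $F_p$ of a complex computing $H^i(U,DR(\cM))$, and the filtered complex $(\Omega^{\bullet}_{Y}(\log E), F_{\bullet})\otimes(\cH^{\geq 0}_{Y},F_{\bullet})[n]$ resolving $DR(\cM(\ast E))$ maps to the filtered complex $F_p DR(\cM)$ on $U$ via restriction along $\rho$; the map $H^{i}(Y, F_{p}DR(\cM(\ast E))) \to H^{i}(U, DR(\cM))$ factors through $H^{i}(U, F_{p}DR(\cM))$. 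So I would first set up this compatibility of the two filtered resolutions and conclude the inclusion, together with the observation that on associated gradeds the map $Gr^F_{-p}H^i(U,DR(\cM))$ receives a surjection from $H^i(Y, Gr^F_{-p}DR(\cM(\ast E)))$ (the $F^H$-graded piece, which is computed via strictness over projective $Y$) and surjects onto $E^{p,i-p}_{\infty}$ (the $F^{DR}$-graded piece).

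For the quantitative statement when $i \leq -n_S - 2$, the key input is Corollary \ref{cor2.7}: for $p \leq -n_S - 2$ the natural restriction $H^{p}(Y, Gr^{F}_{q}DR(\cM(\ast E))) \to H^{p}(U, Gr^{F}_{q}DR(\cM))$ is an isomorphism, and for $p = -n_S - 1$ it is injective. The plan is to run the two spectral sequences side by side: the $E_1$-page of the $F^{DR}$-spectral sequence is $E^{p,q}_{1} = H^{p+q}(U, Gr^F_{-p}DR(\cM))$, and the "$E_1$-page" coming from $Y$ (which degenerates, by Saito's strictness over projective $Y$, giving the direct sum decomposition stated before the proposition) is $H^{p+q}(Y, Gr^F_{-p}DR(\cM(\ast E)))$. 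Corollary \ref{cor2.7} identifies these two groups in total degree $p+q \leq -n_S - 2$ compatibly with the differentials and the comparison maps, and gives injectivity in total degree $-n_S - 1$. Concretely: for a class in total degree $i \leq -n_S-2$, the outgoing $d_r$ differential lands in total degree $i+1 \leq -n_S - 1$, where the comparison map to the $Y$-side is still injective; since the $Y$-side differentials all vanish (degeneration), the $E_1$ differentials on the $U$-side must vanish on classes of total degree $\leq -n_S - 2$. Likewise the incoming differential to total degree $i$ comes from total degree $i-1 \leq -n_S - 3$, where the comparison is an isomorphism onto a group where the relevant differential is zero, so it too vanishes. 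This yields $E^{p,i-p}_{1} = E^{p,i-p}_{\infty}$ for $i \leq -n_S - 2$, which is part (2).

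For part (1), chase the sandwich $H^i(Y, Gr^F_{-p}DR(\cM(\ast E))) \twoheadrightarrow Gr^F_{-p}H^i(U,DR(\cM)) \twoheadrightarrow E^{p,i-p}_{\infty}$, where the left group is the $F^H$-graded piece and the right is the $F^{DR}$-graded piece; by part (2) the composite $H^i(Y,Gr^F_{-p}DR(\cM(\ast E))) \to E^{p,i-p}_1 = E^{p,i-p}_\infty$ is (via Corollary \ref{cor2.7} in degree $i \leq -n_S-2$) an isomorphism, forcing both surjections to be isomorphisms. Hence $Gr^F_{-p}H^i$ for the two filtrations have the same dimension in every degree $-p$, and since $F^H \subseteq F^{DR}$, the two filtrations coincide. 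The main obstacle I anticipate is bookkeeping the compatibility of the comparison maps of Corollary \ref{cor2.7} with the differentials of both spectral sequences — i.e. checking that the isomorphism in degrees $\leq -n_S-2$ and the injection in degree $-n_S-1$ are maps of spectral sequences, so that vanishing of differentials transfers from the (degenerate) $Y$-side to the $U$-side; once that naturality is in place the degree count closes everything.
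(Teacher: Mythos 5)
Your proposal is correct in substance, but it finishes the argument by a genuinely different (and heavier) route than the paper. Both proofs share the same inputs: the filtered comparison map $(\Omega^{\bullet}_{Y}(\log E),F_{\bullet})\otimes(\cH^{\geq 0}_{Y},F_{\bullet})[n]\rightarrow \rho_{\ast}\bigl((\Omega^{\bullet}_{V},F_{\bullet})\otimes(H,F_{\bullet})[n]\bigr)$, which gives $F^{H}_{p}\subseteq F^{DR}_{p}$ exactly as you say; strictness over the projective $Y$, giving $F^{H}_{-p}H^{i}=\bigoplus_{l\leq -p}H^{i}(Y,Gr^{F}_{l}DR(\cM(\ast E)))$; and Corollary \ref{cor2.7}. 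From there the paper closes with a pure dimension count: for $i\leq -n_{S}-2$, Corollary \ref{cor2.7} gives $\dim F^{H}_{-p}H^{i}=\sum_{l\leq -p}\dim H^{i}(U,Gr^{F}_{l}DR(\cM))$, while each $E^{p,i-p}_{\infty}$ is a subquotient of $E^{p,i-p}_{1}=H^{i}(U,Gr^{F}_{-p}DR(\cM))$, so $\dim F^{DR}_{-p}\leq \dim F^{H}_{-p}$; combined with the inclusion this forces (1) and (2) simultaneously, with no need to compare differentials at all. You instead transfer $E_{1}$-degeneration through the morphism of spectral sequences induced by the filtered comparison map; this works, but it costs exactly the naturality bookkeeping you flag as the main obstacle, which the paper's count sidesteps. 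Two details in your write-up need repair, though neither is fatal: the natural comparison map goes from the $Y$-side to the $U$-side (restriction), so there is no ``comparison map to the $Y$-side'' whose injectivity in total degree $-n_{S}-1$ you can invoke; the correct mechanism is that $\phi_{r}$ is surjective in total degrees $\leq -n_{S}-2$ (an isomorphism at $E_{1}$ by Corollary \ref{cor2.7}, preserved inductively since both sides are unchanged there), so $d^{U}_{r}\circ\phi_{r}=\phi_{r}\circ d^{Y}_{r}=0$ kills outgoing differentials, and incoming ones originate in total degree $\leq -n_{S}-3$ and die for the same reason — injectivity in degree $-n_{S}-1$ is never needed. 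Likewise your ``sandwich'' claim that $Gr^{F^{H}}_{-p}H^{i}$ surjects onto $E^{p,i-p}_{\infty}$ is not known a priori; but your final step does not require it: once $E_{1}=E_{\infty}$ in these degrees, the $E_{\infty}$-level map is the map of graded pieces induced by the inclusion of filtrations and is an isomorphism, which together with $F^{H}\subseteq F^{DR}$ yields (1).
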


\begin{proof}
   The natural map of complexes
$$(\Omega^{\bullet}_{Y}(\log E), F_{\bullet}) \otimes (\cH^{\geq 0}_{Y},F_{\bullet})[n] \rightarrow \bR \rho_{\ast}\big( (\Omega^{\bullet}_{V}, F_{\bullet}) \otimes (H,F_{\bullet})[n]\big) \simeq \rho_{\ast}\big( (\Omega^{\bullet}_{V}, F_{\bullet}) \otimes (H,F_{\bullet})[n]\big)$$
is a quasi-isomorphism since both complexes give a resolution of $\rho_{*}(L[n] \otimes \mathbb{C}) \simeq \rho_{*}DR(\cM)$. The map preserves the filtrations and induces an inclusion $$F^{H}_{p}H^{i}(U,DR(\cM)) \subseteq F^{DR}_{p}H^{i}(U,DR(\cM)).$$
If $i \leq  -n_{S} -2$, then because of Corollary \ref{cor2.7},
$$F^{H}_{-p}H^{i}(U,DR(\cM)) = \bigoplus_{l \leq -p } H^{i }(Y, Gr^{F}_{l}DR(\cM(\ast E))) \cong \bigoplus_{l \leq -p }H^{i}(U, Gr^{F}_{l}DR(\cM)).$$
 Since the spaces $E^{p, i-p}_{\infty}$ are subquotients of $H^{i}(U, Gr^{F}_{-p}DR(\cM)),$ there is an inequality $\dim F^{DR}_{-p}H^{i}(U,DR(\cM)) \leq \dim F^{H}_{-p} H^{i}(U,DR(\cM)).$ So we must have $$F^{H}_{-p}H^{i}(U,DR(\cM)) = F^{DR}_{-p}H^{i}(U,DR(\cM))\quad  \text{and} \quad  E^{p,i-p}_{1} = E^{p, i-p}_{\infty}.$$

\end{proof}

\begin{rmk}
    In the previous discussion and proposition, we were working in the analytic category. This is because the perverse sheaf $ DR(\cM) :=  DR(\cM^{an})$ in defined on $U^{an}$, where $U^{an}$ is the associated analytic space. However, since we are working with projective varieties, we may apply GAGA \cite{gaga} when considering the Hodge filtration,
    $$H^{i }(Y^{an}, Gr^{F}_{p}DR( \cM^{an}(\ast E))) \cong H^{i }(Y, Gr^{F}_{p}DR( \cM(\ast E))).$$
    Also, by applying a result of Siu \cite[Thm. B]{Siu}, for a variation of Hodge structure $\cM \in HM_{U}(U,w)$, 
$$ H^{i}(U^{an},Gr^{F}_{-p}DR(\cM^{an})) \cong H^{i}(U,Gr^{F}_{-p}DR(\cM)) \quad \text{if $i \leq -n_{S} -2$}.$$
The previous isomorphism is discussed in more detail in the last section of the paper. 
\end{rmk}

\begin{prop}\label{prop3.4}
Suppose $\cM \in HM_{U}(U,w)$ is a variation of Hodge structure. If $D$ is a sufficiently general hyperplane section of $X$, and  $\epsilon: U \cap D \hookrightarrow U$ is the natural map, then the map
$$H^{i-n}(U,DR(\cM)) \rightarrow H^{i-n}(U \cap D, DR(\epsilon^{\ast}\cM))$$
is an isomorphism for $i < n - n_{S} -2$ and injective for $i = n - n_{S} -2$.
\end{prop}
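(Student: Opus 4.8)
The plan is to deduce Proposition \ref{prop3.4} from the graded-piece version already established in Proposition \ref{prop3.2}, together with the decomposition of de Rham cohomology into graded pieces recorded in the discussion preceding Proposition \ref{prop3.3}. First I would recall that since $\cM$ is a variation of Hodge structure on $U$ and $Y$ is projective, the filtration $F^H_\bullet$ on $H^{i-n}(U,DR(\cM))$ is strict, so there is a direct sum decomposition $H^{i-n}(U,DR(\cM)) = \bigoplus_{p\in\Z} H^{i-n}(Y,Gr^F_p DR(\cM(\ast E)))$, and by Corollary \ref{cor2.7} each summand is isomorphic to $H^{i-n}(U,Gr^F_p DR(\cM))$ in the relevant range of degrees. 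The same applies to $\epsilon^\ast\cM$ on $U\cap D$, whose own ambient resolution and compactification dimension I would keep track of: since $D$ is a general hyperplane, $S\cap D$ has dimension $n_S - 1$ (or $U\cap D$ stays away from $S$ if $n_S \le 0$), so the vanishing and comparison ranges for $U\cap D$ shift accordingly.

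Next I would compare degree by degree. Fix $i$ with $i \le n-n_S-2$, i.e. $i-n \le -n_S-2$. For each fixed $q$, Proposition \ref{prop3.2} (applied with the shift $p \leftrightarrow i-n$) gives that
$$H^{i-n}(U,Gr^F_q DR(\cM)) \rightarrow H^{i-n}(U\cap D, Gr^F_q DR(\epsilon^\ast\cM))$$
is bijective when $i-n < -n_S-2$ and injective when $i-n = -n_S-2$. Summing over $q\in\Z$ and using the decompositions above on both sides, the map on de Rham cohomology $H^{i-n}(U,DR(\cM)) \to H^{i-n}(U\cap D, DR(\epsilon^\ast\cM))$ is then an isomorphism for $i < n-n_S-2$ and injective for $i = n-n_S-2$, which is exactly the claim. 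The one subtlety is that to invoke the decomposition of $H^{i-n}(U\cap D, DR(\epsilon^\ast\cM))$ into its graded pieces one needs the analogue of Corollary \ref{cor2.7} on $U\cap D$; here I would note that a general $D$ can be chosen so that its preimage in $Y$ meets $E$ transversally, so $f$ restricts to a strong log resolution of $S\cap D$ in $X\cap D$, and all the machinery of Sections 2 and 3 applies verbatim on $X\cap D$ with $n$ replaced by $n-1$ and $n_S$ by $n_S - 1$, giving the decomposition in degrees $\le -(n_S-1)-2 = -n_S-1$, which comfortably covers the range we need.

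An alternative, slightly cleaner route avoids the hyperplane section on the target entirely: apply Proposition \ref{prop3.2} in the form of the long exact sequence coming from the triangle $\varrho_!\varrho^\ast\cM \to \cM \to \epsilon_+\epsilon^\ast\cM \xrightarrow{+1}$, take $Gr^F_q DR$ and global cohomology, sum over $q$, and use $H^{i-n}(U\cap D, DR(\epsilon^\ast\cM)) = \bigoplus_q H^{i-n}(U\cap D, Gr^F_q DR(\epsilon^\ast\cM))$ together with $H^{i-n}(U,DR(\varrho_!\varrho^\ast\cM)) = \bigoplus_q H^{i-n}(U,Gr^F_q DR(\varrho_!\varrho^\ast\cM))$, the latter vanishing in the stated range by Lemma \ref{lemma3.1} and Theorem \ref{thm2.9}. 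I expect the main obstacle to be bookkeeping rather than anything deep: one must be careful that the direct-sum decomposition into graded pieces, which relies on strictness of the Hodge filtration via Saito's theory, genuinely holds for the mixed Hodge structure on $H^{i-n}(U, DR(\cM))$ (it does, since $\cM$ underlies a VHS and the ambient varieties are projective), and that the decomposition on $U\cap D$ is valid — this is why choosing $D$ general is essential and why the degree range in the conclusion is one less than in Theorem \ref{thm2.9}.
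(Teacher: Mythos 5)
Your proposal is correct and follows essentially the same route as the paper: the paper's own (very terse) proof likewise deduces the statement from Proposition \ref{prop3.2} applied to each graded piece and from Proposition \ref{prop3.3} (strictness/degeneration giving the decomposition into graded pieces), together with the observation that for general $D$ one has $\mathrm{codim}(S\cap D, D)\geq \mathrm{codim}(S,X)-1$, so the required ranges on $U\cap D$ are covered. Your filling-in of the bookkeeping on $U\cap D$ is exactly the content the paper leaves implicit.
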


\begin{proof}
For general $D$, $U \cap D$ is smooth and $codim(S \cap D, D) \geq codim(S,X) -1$. Equality holds only if $S = \emptyset$. The result follows from Propositions \ref{prop3.2} and \ref{prop3.3}.
\end{proof}

\begin{thm}\label{thm3.5}
  If $\cM \in HM_{U}(U,w)$ is a variation of Hodge structure, then the mixed Hodge structure on $H^{i-n}(U,DR(\cM))$ is pure of weight $w +i -n$ provided that $i \leq n- n_{S} -2$. Furthermore, 
$$H^{i-n}(U,DR(\cM)) \cong \bigoplus_{p \in \Z} H^{i - n}(U, Gr^{F}_{p}DR(\cM))$$ 
and
$$dim[H^{i-n}(U, Gr^{F}_{-p}DR(\cM))] = dim[H^{ i -n}(U, Gr^{F}_{p +n -w -i}DR(\cM))] .$$

\end{thm}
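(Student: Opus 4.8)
The plan is to induct on $n_S$, using the hyperplane section Proposition \ref{prop3.4} to reduce to the case $S = \emptyset$, i.e. $U$ projective. First I would set up the induction: for the base case, when $i \leq n - n_S - 2$ with $n_S$ as small as possible relative to $n$, one can cut $U$ by a sufficiently general hyperplane section $D$ as in Proposition \ref{prop3.4}. Then $U \cap D$ is smooth of dimension $n-1$, the restriction $\epsilon^*\cM$ is again a variation of Hodge structure (of the same weight $w$, now on a variety of dimension $n-1$), and $\dim(S \cap D) = n_S - 1$ (or $S \cap D = \emptyset$, handled separately). The map $H^{i-n}(U, DR(\cM)) \to H^{i-n}(U \cap D, DR(\epsilon^*\cM))$ is an isomorphism for $i < n - n_S - 2$ and injective for $i = n - n_S - 2$, so after finitely many such cuts we land in the situation where $S$ has become empty, i.e.\ a smooth projective variety, where $H^{i-n}$ of the de Rham complex of a polarized VHS carries a pure Hodge structure of the asserted weight $w + i - n$ by Saito's theory (the VHS of weight $w-n$ extends to a pure Hodge module of weight $w$, and its cohomology on a projective variety is pure). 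Note the hyperplane cuts preserve purity and the weight shift $w + i - n$ is stable under the cuts since both $i$ and $n$ drop by one at each step.

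For the degeneration and dimension statements, I would argue as follows. Proposition \ref{prop3.3} already gives, for $i \leq -n_S - 2$ (after the shift, $i - n \leq -n_S - 2$, i.e. $i \leq n - n_S - 2$), that the Hodge-to-de Rham spectral sequence $E_1^{p,q} = H^{p+q}(U, Gr^F_{-p}DR(\cM)) \Rightarrow H^{p+q}(U, DR(\cM))$ degenerates at $E_1$ in the relevant total degree, so that $H^{i-n}(U, DR(\cM)) \cong \bigoplus_{p} H^{i-n}(U, Gr^F_p DR(\cM))$. Combined with the identification $Gr^F_{-p}H^{i}(U, DR(\cM)) \cong H^{i}(Y, Gr^F_{-p}DR(\cM(\ast E))) \cong H^{i}(U, Gr^F_{-p}DR(\cM))$ from Corollary \ref{cor2.7}, this gives the direct sum decomposition. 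The symmetry of dimensions then comes from duality: by the corollary to Proposition \ref{dualprop}, since $\cM \in HM_U(U,w)$ is polarized we have $\bR\cH om(Gr^F_p DR(\cM), \omega^\bullet_U) \simeq Gr^F_{-p-w}DR(\cM)$, and Poincaré–Serre duality on the pure Hodge structure $H^{i-n}(U, DR(\cM))$ (which is of weight $w+i-n$, pure, hence self-dual up to a Tate twist pairing $H^{i-n}$ with $H^{-(i-n) + (\text{something})}$) interchanges the Hodge pieces. Concretely I would match $Gr^F_{-p}$-cohomology in degree $i - n$ with $Gr^F_{p'}$-cohomology in the complementary degree, where the complementary Hodge index works out to $p' = p + n - w - i$ after bookkeeping the weight $w$ and the shift $n$; the pure Hodge structure being of weight $w + i - n$ forces the conjugate-symmetry $h^{a,b} = h^{b,a}$ which, translated through the $Gr^F$-indexing, is exactly the asserted equality.

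The main obstacle I anticipate is the careful bookkeeping of indices in the duality/symmetry step — keeping straight the three different shifts (the $[n]$ in the de Rham complex, the weight $w$, and the cohomological degree $i - n$), and verifying that the pairing induced by the polarization together with Poincaré duality on $U$ genuinely identifies $H^{i-n}(U, Gr^F_{-p}DR(\cM))$ with $H^{i-n}(U, Gr^F_{p+n-w-i}DR(\cM))$ rather than with cohomology in a different degree. One must check that in the range $i \leq n - n_S - 2$ the relevant "dual" cohomology also sits in a range where the spectral sequence degenerates and where Corollary \ref{cor2.7} applies, so that the symmetry statement is between genuinely computed groups; this should follow because purity of $H^{i-n}(U, DR(\cM))$ together with the degeneration forces the Hodge numbers to be symmetric, and the degeneration has already been established for the whole graded object in that total degree. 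A secondary point requiring care is the base of the induction when $S \cap D$ becomes empty before $n_S$ steps (the codimension can jump), but Proposition \ref{prop3.4} is stated to cover exactly that, so the induction still terminates at a smooth projective variety.
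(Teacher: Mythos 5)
Your proposal is correct and follows essentially the same route as the paper: repeated general hyperplane cuts via Proposition \ref{prop3.4} (both $n$ and $n_{S}$ drop by one, so the range $i\leq n-n_{S}-2$ is preserved) until $S$ becomes empty and purity follows from Saito's theory on the resulting smooth projective variety, with the decomposition coming from Proposition \ref{prop3.3} together with Corollary \ref{cor2.7}. Your closing observation is the right one: the dimension symmetry needs no Poincar\'e--Serre duality in a complementary degree, only the conjugation symmetry of the Hodge numbers of the pure structure of weight $w+i-n$ read through the $Gr^{F}$-indexing, which is exactly how the paper's statement is meant to be deduced.
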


\begin{proof}
The theorem holds if $S = \emptyset$ because $U =X$ is a smooth projective variety. If $S \neq \emptyset$, then for a sufficiently general hyperplane $D$, the intersection $U \cap D$ is smooth and $codim(S \cap D, D) = codim(S, X)$. If $i \leq n - n_{S} -2$, then $H^{i-n}(U,DR(\cM))$ injects into $H^{i-n}(U \cap D, DR(\epsilon^{*}\cM))$ by the previous proposition. By induction on the dimension of $U$, the Hodge structure $H^{i-n}(U \cap D, DR(\epsilon^{*}\cM))$ is pure of weight $w +i -n$. Therefore, $H^{i-n}(U,DR(\cM))$ is also pure of weight $w +i -n$. The decomposition follows from Proposition \ref{prop3.3}.
\end{proof}

\subsection{Applications}

\begin{thm}\label{Hodge}
     Let $\bH= ((H,F^{\bullet}),L)$ be a variation of Hodge structure of weight $w-n$ on $U$, and $\cM \in HM_{U}(U,w)$ the corresponding Hodge module. Let $IC^{H}_{X}(L) \in HM_{X}(X,w)$ be the unique Hodge module on $X$ that extends $\bH,$ whose underlying perverse sheaf is the intersection complex $IC_{X}(L)$ twisted by $L$. The natural map 
     $$H^{i-n}(X,IC_{X}(L)) \rightarrow H^{i}(U,L)$$ 
     is an isomorphism of pure $\mathbb{Q}$-Hodge structures of weight $w +i -n$ whenever $i \leq n- n_{S} -1.$ Furthermore, if $i \leq n -n_{S} -2,$ then the natural map
     $$H^{i-n}(X, Gr^{F}_{-p}DR(IC^{H}_{X}(L))) \rightarrow H^{i-n}(U, Gr^{F}_{-p}DR(\cM))$$
     is an isomorphism. 
 \end{thm}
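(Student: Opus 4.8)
The plan is to deduce the theorem from the comparison results already assembled, treating the two assertions separately. For the first assertion, I would first invoke the defining property of the intersection-complex Hodge module: $IC^{H}_{X}(L) \in HM_{X}(X,w)$ is \emph{pure} of weight $w$, so by Saito's theory the hypercohomology $H^{i-n}(X, IC_{X}(L)) = \mathbb{H}^{i-n}(X, \mathrm{rat}\, IC^{H}_{X}(L))$ carries a pure Hodge structure of weight $(w) + (i-n) = w+i-n$ for every $i$ (this uses that $X$ is projective, so the direct image to a point is a pure Hodge structure of the expected weight). The map $H^{i-n}(X, IC_{X}(L)) \to H^{i}(U,L)$ is the restriction map induced by $j^{\ast} IC_{X}(L) \simeq L[n]$ on $U$; since $U$ is smooth, $IC^H_X(L)|_U \cong \cM$. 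To prove it is an isomorphism for $i \le n - n_S - 1$, I would use the standard triangle $i_{S+}i_S^! IC_X(L) \to IC_X(L) \to \bR j_* j^* IC_X(L) \to$, or better, filter by the open–closed decomposition and observe that the cone of $IC_X(L) \to \bR j_* (L[n])$ is supported on $S$; a dimension/support estimate (the intersection complex has no sub- or quotient objects supported on proper subvarieties, combined with $\dim S = n_S$) forces the cone's hypercohomology to vanish in degrees $\le -n_S - 2$ and inject in degree $-n_S-1$. This is exactly the kind of Artin-vanishing estimate that also underlies Proposition~\ref{prop3.4}, so I would run the same general-hyperplane induction on $\dim U$ to nail down the precise range, matching the degrees with those in Theorem~\ref{thm3.5}.

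For the second assertion, about the graded de Rham pieces, the key point is the commutative square relating $IC^H_X(L)$ on $X$, its restriction $\cM$ to $U$, and — pulling back along the strong log resolution $f: Y \to X$ — the module $\cM(\ast E)$ on $Y$. I would use that $Gr^F_{-p}DR(\bullet)$ commutes with proper pushforward (the cited Prop.~4.10 of \cite{ks}), so $\bR f_* Gr^F_{-p}DR(\cM(\ast E)) \simeq Gr^F_{-p}DR(f_+ \cM(\ast E))$, and that $IC^H_X(L)$ is a direct summand of $f_+(\text{something})$ by the decomposition theorem for the strong log resolution — more precisely, $IC^H_X(L)$ is the summand of $\cH^0 f_+ \cM(\ast E)$ with strict support $X$. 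Then $Gr^F_{-p}DR(IC^H_X(L))$ is a direct summand of $Gr^F_{-p}DR(f_+\cM(\ast E))$, and taking hypercohomology, $H^{i-n}(X, Gr^F_{-p}DR(IC^H_X(L)))$ is a summand of $H^{i-n}(Y, Gr^F_{-p}DR(\cM(\ast E)))$. By Corollary~\ref{cor2.7} the latter maps isomorphically to $H^{i-n}(U, Gr^F_{-p}DR(\cM))$ when $i - n \le -n_S - 2$, i.e. $i \le n - n_S - 2$. So I would chase the diagram: the restriction map $H^{i-n}(X, Gr^F_{-p}DR(IC^H_X(L))) \to H^{i-n}(U, Gr^F_{-p}DR(\cM))$ fits into a commuting triangle with the isomorphism of Corollary~\ref{cor2.7} and the summand inclusion; injectivity is then immediate, and surjectivity follows because the full group $H^{i-n}(Y, Gr^F_{-p}DR(\cM(\ast E)))$ already surjects (indeed isomorphs) onto the target, so a fortiori the $IC$-summand does — here one uses that the complementary summands of $f_+\cM(\ast E)$ (those with strict support in $S$, plus the higher cohomology sheaves $\cH^{\ne 0}$) contribute nothing to $H^{i-n}(U, Gr^F_{-p}DR(\cM))$ because their restriction to $U$ vanishes, while Lemma~\ref{lemma2.1} controls the higher $\cH^k f_+\cM(\ast E)$.

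The main obstacle I anticipate is bookkeeping the decomposition of $f_+\cM(\ast E)$ precisely enough to extract the $IC$-summand and to be sure the \emph{natural} map (the one induced by adjunction $IC^H_X(L) \to \bR j_* \cM$, after applying $Gr^F_{-p}DR$ and taking hypercohomology) is the one sitting in the triangle above — i.e. compatibility of the decomposition-theorem splitting with the graded de Rham functor and with restriction to $U$. Saito's results guarantee these compatibilities, but stating them cleanly without re-deriving the decomposition theorem is the delicate part. A secondary point is confirming the weight: one must check that the isomorphism $H^{i-n}(X, IC_X(L)) \xrightarrow{\sim} H^i(U,L)$ is an isomorphism of Hodge structures, not merely of vector spaces, which follows because the map is a morphism of mixed Hodge structures (functoriality of $j^*$ in $MHM$) whose source is pure of weight $w+i-n$ and whose target, by Theorem~\ref{thm3.5}, is also pure of the same weight in that range — a morphism of pure Hodge structures of the same weight that is a linear isomorphism is an isomorphism of Hodge structures. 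Everything else is diagram-chasing on top of Corollary~\ref{cor2.7}, Theorem~\ref{thm3.5}, and the cited structural results for mixed Hodge modules.
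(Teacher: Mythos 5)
Your treatment of the first assertion is essentially the paper's: the open--closed triangle $i_{S+}i_{S}^{!}IC^{H}_{X}(L) \rightarrow IC^{H}_{X}(L) \rightarrow j_{+}j^{*}IC^{H}_{X}(L)$, a cosupport/dimension estimate for the cone supported on $S$ (the paper cites the argument of \cite[Thm. 6.7.4]{max}, which is exactly this; since $i_{S}^{!}IC_{X}(L)$ lies in $^{p}D^{\geq 1}$ and $\dim S = n_{S}$, its hypercohomology vanishes in degrees $\leq -n_{S}$, which already yields the isomorphism for all $i \leq n - n_{S} -1$ without any hyperplane induction), purity of the source from Saito's direct image theorem, and the remark that a bijective morphism of mixed Hodge structures is an isomorphism of Hodge structures. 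That half is fine, up to your acknowledged off-by-one bookkeeping.

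The second assertion is where there is a genuine gap. You invoke the decomposition theorem for $f_{+}\cM(\ast E)$ and declare $IC^{H}_{X}(L)$ to be ``the summand of $\cH^{0}f_{+}\cM(\ast E)$ with strict support $X$.'' But $\cM(\ast E) = \rho_{+}\cM$ is genuinely mixed, and $f_{+}\cM(\ast E) \simeq j_{+}\cM$; Saito's decomposition theorem and the strict support decomposition apply only to direct images of \emph{pure} Hodge modules. In fact $IC^{H}_{X}(L) = j_{!\ast}\cM$ is merely the lowest-weight subobject of $\cH^{0}(j_{+}\cM)$ (the image of $\cH^{0}(j_{!}\cM) \rightarrow \cH^{0}(j_{+}\cM)$); the quotient is supported on $S$ with strictly larger weights and the extension is non-split in general (already the underlying perverse sheaf $^{p}\cH^{0}\bR j_{\ast}L[n]$ is not semisimple). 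Hence the claimed summand inclusion of $H^{i-n}(X, Gr^{F}_{-p}DR(IC^{H}_{X}(L)))$ into $H^{i-n}(Y, Gr^{F}_{-p}DR(\cM(\ast E)))$ is unjustified, and with it both the injectivity and the surjectivity in your diagram chase collapse. Replacing $\cM(\ast E)$ by the pure extension $\cM_{Y} \in HM_{Y}(Y,w)$ does restore the decomposition theorem, but then Corollary \ref{cor2.7} no longer applies: the comparison map $H^{i-n}(Y, Gr^{F}_{p}DR(\cM_{Y})) \rightarrow H^{i-n}(U, Gr^{F}_{p}DR(\cM))$ is only \emph{surjective} in the relevant range (Proposition \ref{cor3.6}, whose proof needs a separate weight argument), so the isomorphism still does not follow. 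The paper's route avoids all of this: once the first assertion is an isomorphism of pure Hodge structures, strictness gives isomorphisms on the $F$-graded pieces; Theorem \ref{thm3.5} together with Proposition \ref{prop3.3} identifies $Gr^{F}_{-p}$ of the mixed Hodge structure on $H^{i}(U,L)$ with $H^{i-n}(U, Gr^{F}_{-p}DR(\cM))$ for $i \leq n - n_{S} -2$, while on the $X$-side Saito's strictness for the pure module on the projective variety identifies $Gr^{F}_{-p}$ with $H^{i-n}(X, Gr^{F}_{-p}DR(IC^{H}_{X}(L)))$, and naturality shows the induced isomorphism is the natural map. Rewrite your second half along these lines (or, alternatively, bound the hypercohomology of $Gr^{F}_{-p}DR(i_{S+}i_{S}^{!}IC^{H}_{X}(L))$ directly using the support lemma of Section 5), rather than via a splitting of $f_{+}\cM(\ast E)$ that does not exist.
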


 \begin{proof}
    By the assumption, we have $\cM = j^{*}IC^{H}_{X}(L)$. There is a natural triangle in $D^{b}MHM(X)$ given by 
    $$\begin{tikzcd}
        i_{S+}i^{!}_{S}IC^{H}_{X}(L) \arrow[r] & IC^{H}_{X}(L) \arrow[r] & j_{+}j^{*}IC^{H}_{X}(L) \arrow[r, "+1"] & \hfill.
    \end{tikzcd}$$
   The previous exact triangle of mixed Hodge modules is compatible with the following exact triangle of perverse sheaves,
    $$\begin{tikzcd}
        i_{S*}i^{!}_{S}IC_{X}(L) \arrow[r] & IC_{X}(L) \arrow[r] & \bR j_{*}L[n] \arrow[r, "+1"] & \hfill.
    \end{tikzcd}$$
    By applying the derived functor $\bR \Gamma(X, \bullet),$ and taking cohomology, then we receive the natural map
    $$H^{i-n}(X, IC_{X}(L)) \rightarrow H^{i}(U, L).$$
    Using the same argument given by \cite[Thm. 6.7.4]{max}, the natural map $H^{i-n}(X, IC_{X}(L)) \rightarrow H^{i}(U, L)$ is an isomoprhism whenever $i < n- n_{S}.$ Moreover, this isomorphism must be an isomorphism of Hodge structures. The theorem now follows from Theorem \ref{thm3.5}.
 \end{proof}

\begin{prop}\label{cor3.6}
Assume $\cM \in HM_{U}(U,w)$ is a variation of Hodge structure and let $\cM_{Y} \in HM_{Y}(Y,w)$ be the unique Hodge module that extends $f\vert_{V}^{*}\cM$. If $i \leq n - n_{S} -2$, then the natural map 
$$H^{i-n}(Y, Gr^{F}_{p}DR(\cM_{Y})) \rightarrow H^{i-n}(U, Gr^{F}_{p}DR(\cM))$$
is surjective. 
\end{prop}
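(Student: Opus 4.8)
The plan is to compare the intersection complex Hodge module $IC^H_X(L)$ on $X$ with the corresponding object $\cM(*E) = \rho_+(f|_V^*\cM)$ upstairs on $Y$, via the strong log resolution $f\colon Y \to X$. First I would observe that since $IC^H_X(L) = j_{!+}\cM$ is the intermediate extension, there is a natural surjection (in the category of Hodge modules with strict support, coming from the image of $\mathcal H^0 f_+ \cM(*E) \to \cH^0 f_+(\text{something})$, or more directly from the fact that $IC^H_X(L)$ is a subquotient of $\tf_+$ of a Hodge module extending $\cM$ from $V$). Concretely, I would use that $\cH^0(f_+ \cM_Y) $ contains $IC^H_X(L)$ as a direct summand, where $\cM_Y \in HM_Y(Y,w)$ is the unique extension of $f|_V^*\cM$ to $Y$; this is the decomposition theorem for the proper map $f$ applied to the pure Hodge module $\cM_Y$. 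Taking $Gr^F_p DR$, which commutes with $\bR f_*$ by \cite{ks}, and then $\bR\Gamma(X,-)$, yields that $H^{i-n}(Y, Gr^F_p DR(\cM_Y))$ has $H^{i-n}(X, Gr^F_p DR(IC^H_X(L)))$ as a direct summand for every $i$, hence a surjection onto it.

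Next I would relate the right-hand side. By Theorem \ref{Hodge}, for $i \leq n - n_S - 2$ the natural map $H^{i-n}(X, Gr^F_{-p}DR(IC^H_X(L))) \to H^{i-n}(U, Gr^F_{-p}DR(\cM))$ is an isomorphism. Composing the direct-summand projection $H^{i-n}(Y, Gr^F_p DR(\cM_Y)) \twoheadrightarrow H^{i-n}(X, Gr^F_p DR(IC^H_X(L)))$ with this isomorphism gives the surjectivity claimed in the proposition. The one point requiring care is that the map in the statement is the \emph{natural} restriction map $H^{i-n}(Y, Gr^F_p DR(\cM_Y)) \to H^{i-n}(U, Gr^F_p DR(\cM))$ (restriction along $V \cong U$), so I must check that this natural map factors as the decomposition-theorem projection followed by the isomorphism of Theorem \ref{Hodge}; this is a compatibility of adjunction/restriction maps that follows from functoriality of $Gr^F_p DR$ and of $\bR\Gamma$, since restricting $\cM_Y$ to $V$ recovers $\cM$ and restricting $IC^H_X(L)$ to $U$ recovers $\cM$, and the decomposition $f_+\cM_Y \cong IC^H_X(L) \oplus (\text{rest})$ restricts over $U$ to the identity on $\cM$ (the other summands being supported on $S$).

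The main obstacle I anticipate is precisely this last compatibility: making sure that the decomposition-theorem splitting is compatible with restriction to the open set $U$, i.e. that the complementary summands in $\cH^0(f_+\cM_Y)$ and the higher $\cH^j(f_+\cM_Y)$ all have support contained in $S$, so that upon applying $j^*$ only the $IC^H_X(L)$ summand survives and the natural restriction map is identified with the projection-then-isomorphism composite. This uses that $f|_V\colon V \to U$ is an isomorphism, so $f_+\cM_Y|_U = \cM$ concentrated in degree $0$, forcing all the extra summands (and all $\cH^{j\neq 0}$) to be supported on $S$. Once that is in place, the surjectivity is immediate from Theorem \ref{Hodge} and the splitting. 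I would also remark that no weight or purity input beyond Theorem \ref{thm3.5}/\ref{Hodge} is needed here, since we only claim surjectivity, not bijectivity.

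\begin{proof}
Let $\cM_Y \in HM_Y(Y,w)$ be the unique Hodge module extending $f|_V^*\cM$. By Saito's decomposition theorem applied to the proper map $\tf = i_X \circ f$ and the pure Hodge module $\cM_Y$, the object $\cH^0(f_+\cM_Y)$ decomposes into Hodge modules with strict support, and since $f|_V\colon V \to U$ is an isomorphism with $f_+\cM_Y|_U \simeq \cM$ in degree $0$, the summand with strict support $X$ is exactly $IC^H_X(L)$; all remaining summands of $\cH^0(f_+\cM_Y)$, as well as all $\cH^j(f_+\cM_Y)$ for $j \neq 0$, have support contained in $S$. In particular there is a split injection $IC^H_X(L) \hookrightarrow \cH^0(f_+\cM_Y)$ whose restriction to $U$ is the identity of $\cM$.

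Applying $Gr^F_p DR(-)$, which commutes with $\bR f_*$ by \cite[Prop. 4.10]{ks}, and then $\bR\Gamma(X,-)$, we obtain that $H^{i-n}(X, Gr^F_p DR(IC^H_X(L)))$ is a direct summand of $H^{i-n}(Y, Gr^F_p DR(\cM_Y))$ for every $i$; let $\pi$ denote the corresponding projection. Restricting the above splitting along $j\colon U \hookrightarrow X$ (equivalently along $\rho\colon V\hookrightarrow Y$ under $f|_V$) and using functoriality of $Gr^F_p DR$ and of cohomology, the natural restriction map
$$H^{i-n}(Y, Gr^F_p DR(\cM_Y)) \to H^{i-n}(U, Gr^F_p DR(\cM))$$
factors as $\pi$ followed by the natural map $H^{i-n}(X, Gr^F_p DR(IC^H_X(L))) \to H^{i-n}(U, Gr^F_p DR(\cM))$, because the complementary summands of $\cH^0(f_+\cM_Y)$ and the higher $\cH^j(f_+\cM_Y)$ are supported on $S$ and hence contribute nothing after applying $j^*$ and passing to cohomology of $U$.

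Finally, when $i \leq n - n_S - 2$, Theorem \ref{Hodge} shows the natural map
$$H^{i-n}(X, Gr^F_{-p} DR(IC^H_X(L))) \to H^{i-n}(U, Gr^F_{-p} DR(\cM))$$
is an isomorphism. Since $\pi$ is surjective, the composite is surjective, i.e. the natural map
$$H^{i-n}(Y, Gr^F_p DR(\cM_Y)) \to H^{i-n}(U, Gr^F_p DR(\cM))$$
is surjective for all $i \leq n - n_S - 2$, as claimed.
\end{proof}
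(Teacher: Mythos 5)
Your proof is correct, but it takes a genuinely different route from the paper. The paper proves the surjectivity by exploiting the short exact sequence $0 \rightarrow \cM_{Y} \rightarrow \cM(\ast E) \rightarrow \iota_{+}\cH^{1}\iota^{!}\cM_{Y} \rightarrow 0$, pushing forward to a point, and running the weight spectral sequence: since $Gr^{W}_{p}\cH^{1}_{E}(\cM_{Y})=0$ for $p<w+1$, and both $H^{i-n}\pi_{+}\cM_{Y}$ (direct image theorem) and $H^{i-n}\pi_{+}\cM(\ast E)$ (Theorem \ref{thm3.5}) are pure of weight $w+i-n$ in the stated range, applying the exact functor $Gr^{W}_{w+i-n}$ yields a short exact sequence of Hodge structures, and then $Gr^{F}_{p}$ gives the surjection. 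You instead invoke Saito's decomposition theorem for $f_{+}\cM_{Y}$, identify the strict-support-$X$ summand of $\cH^{0}(f_{+}\cM_{Y})$ with $IC^{H}_{X}(L)$, note that all other summands (and all $\cH^{j}$, $j\neq 0$) restrict to zero over $U$ so the natural restriction map factors through the split projection, and conclude from the isomorphism in Theorem \ref{Hodge}. Your compatibility discussion is the right thing to check, and it does go through: whatever splitting one chooses, the complementary summands are killed by $j^{*}$, so the restriction map agrees with the $IC$-summand's restriction, which is the map in Theorem \ref{Hodge}. Note that your argument is close in spirit to the paper's own later use of the decomposition theorem in Corollary \ref{splitThm}, and both proofs ultimately rest on the purity statement of Theorem \ref{thm3.5} (yours indirectly, through Theorem \ref{Hodge}); the paper's weight-filtration argument buys a little more, namely an explicit identification of the kernel as $Gr^{W}_{w+i-n}H^{i-n-1}\pi_{+}\cH^{1}_{E}(\cM_{Y})$, while yours shows the map factors through a split surjection onto $H^{i-n}(X,Gr^{F}_{p}DR(IC^{H}_{X}(L)))$. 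One small technical point: since $X$ may be singular, the commutation of $Gr^{F}_{p}DR$ with the direct image should be phrased via the embedding $i_{X}:X\hookrightarrow \tX$ and $\tf=i_{X}\circ f$, as the paper does elsewhere, rather than quoting \cite[Prop.~4.10]{ks} verbatim for the map $f$ itself; this is cosmetic and does not affect the argument.
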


\begin{proof}
We will abuse notation by denoting $f\vert_{V}^{*}\cM$ as simply $\cM.$ Since $E$ is a divisor on $Y$ and $\cM_{Y}$ has no non-trivial sub-object supported on $E$, there is a short exact sequence of mixed Hodge modules
$$0 \rightarrow \cM_{Y} \rightarrow \cM(*E) \rightarrow \iota_{+}\cH^{1}\iota^{!}\cM_{Y} \rightarrow 0.$$
For notation let $\cH^{1}_{E}(\cM_{Y}):=\iota_{+}\cH^{1}\iota^{!}\cM_{Y}$. If $\pi:Y \rightarrow \{pt\}$, then there is an exact triangle
$$\begin{tikzcd}
\pi_{+}\cM_{Y} \arrow[r] & \pi_{+}\cM(*E) \arrow[r] & \pi_{+} \cH^{1}_{E}(\cM_{Y}) \arrow[r, "+1"] & \hfill.
\end{tikzcd}$$
By taking the cohomology of the exact triangle above, we have the resulting log exact sequence of mixed Hodge structures
$$ \cdots \rightarrow H^{i-n}\pi_{+}\cM_{Y} \rightarrow H^{i-n}\pi_{+}\cM(*E) \rightarrow H^{i-n}\pi_{+}\cH^{1}_{E}(\cM_{Y}) \rightarrow H^{i +1 -n}\pi_{+}\cM_{Y} \rightarrow \cdots.$$
By the direct image theorem of mixed Hodge modules \cite[Thm. 4.13]{ks}, there is a convergent weight spectral sequence for $\pi_{+}\cH^{1}_{E}(\cM_{Y})$ given by
$$E^{p,q}_{1}= H^{p +q}\pi_{+}Gr^{W}_{-p}\cH^{1}_{E}(\cM_{Y}) \Rightarrow H^{p +q}\pi_{+}\cH^{1}_{E}(\cM_{Y}),$$
and each differential $d_{1}:E^{p,q}_{1} \rightarrow E^{p+1,q}_{1}$ is a morphism in $HM(pt,q)$. The weight spectral sequence degenerates at $E_{2}$, and we have
$$Gr^{W}_{q}H^{p+q}\pi_{+}\cH^{1}_{E}(\cM_{Y}) \cong E^{p,q}_{2} \quad \text{for every $p,q \in \mathbb{Z}$}.$$
But, by \cite[Prop. 2.26]{saito2}, $Gr^{W}_{p}\cH^{1}_{E}(\cM_{Y}) = 0$ for $p < w +1$. Therefore,
$$E^{-w, w +i -n}_{1} = H^{i-n}Gr^{W}_{w}\pi_{+}\cH^{1}_{E} (\cM_{Y}) = 0$$
and
$$E^{-w, w +i -n}_{2} \cong Gr^{W}_{w +i -n}H^{i -n}\pi_{+}\cH^{1}_{E}(\cM_{Y})= 0.$$
The Hodge structure $H^{i-n}\pi_{+}\cM_{Y} = H^{i-n}(Y,DR(\cM_{Y}))$ is pure of weight $w +i -n$ by the direct image theorem \cite{saito}. If $i \leq n -n_{S} -2$, then by Theorem \ref{thm3.5}, the Hodge structure $H^{i-n}\pi_{+}\cM(*E) = H^{i-n}(U, DR(\cM))$ is pure of weight $w +i -n$, too. The functor $Gr^{W}_{w +i -n}(\bullet)$ is exact, and we have shown that $Gr^{W}_{w +i -n}H^{i -n}\pi_{+}\cH^{1}_{E}(\cM_{Y})= 0.$ So, by the long exact sequence above, there is a short exact sequence
$$0 \rightarrow Gr^{W}_{w +i -n}H^{i-n -1}\pi_{+}\cH^{1}_{E}(\cM_{Y}) \rightarrow H^{i-n}\pi_{+}\cM_{Y} \rightarrow H^{i-n}\pi_{+}\cM(*E) \rightarrow 0.$$
If we apply the functor $Gr^{F}_{p}(\bullet)$ to the exact sequence above, there is a surjective map
$$H^{i-n}(Y,Gr^{F}_{p}DR(\cM_{Y})) \cong Gr^{F}_{p}H^{i-n}\pi_{+}\cM_{Y}  \rightarrow  Gr^{F}_{p}H^{i-n}\pi_{+}\cM(*E) \cong H^{i-n}(U,Gr^{F}_{p}DR(\cM)) .$$
\end{proof}

\begin{cor}
    If $i+p \leq n - n_{S} -2,$ then the natural map 
    $$H^{i}(Y, \Omega^{p}_{Y}) \rightarrow H^{i}(U, \Omega^{p}_{U})$$
is surjective. 
\end{cor}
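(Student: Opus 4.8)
The plan is to deduce this corollary from Proposition \ref{cor3.6} by specializing to the trivial Hodge module and unwinding the identification of graded de Rham pieces with sheaves of forms. First I would take $\cM = \Q^{H}_{U}[n] \in HM_{U}(U,n)$, so that $w = n$, and let $\cM_{Y} \in HM_{Y}(Y,n)$ be the unique Hodge module on $Y$ extending $f|_{V}^{*}\cM$. Since $f$ is a strong log resolution, $f|_{V}:V\to U$ is an isomorphism, so $f|_{V}^{*}\cM = \Q^{H}_{V}[n]$. Crucially, because $Y$ is smooth and $E$ is a simple normal crossing divisor, the unique Hodge module extension of $\Q^{H}_{V}[n]$ with strict support $Y$ is simply the trivial Hodge module $\Q^{H}_{Y}[n]$ (the intersection complex of the constant sheaf on a smooth variety is the constant sheaf). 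Thus $\cM_{Y} = \Q^{H}_{Y}[n]$.

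Next I would invoke the computation $Gr^{F}_{-p}DR(\Q^{H}_{Z}[n]) \simeq \Omega^{p}_{Z}[n-p]$, recorded in the Example following equation \eqref{gradedEQ}, valid for $0 \le p \le n$ on any smooth irreducible $Z$. Applying this with $Z = Y$ and with $Z = U$, the map in Proposition \ref{cor3.6} (taken with the index $-p$ in place of $p$) becomes, after accounting for the shift $[n-p]$,
$$H^{i-n}\big(Y, \Omega^{p}_{Y}[n-p]\big) \cong H^{i-p}(Y,\Omega^{p}_{Y}) \longrightarrow H^{i-p}(U,\Omega^{p}_{U}) \cong H^{i-n}\big(U,\Omega^{p}_{U}[n-p]\big),$$
and this is surjective whenever the hypothesis $i \le n - n_{S} - 2$ of Proposition \ref{cor3.6} holds. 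Renaming the cohomological degree (replacing $i$ by $i+p$), the map $H^{i}(Y,\Omega^{p}_{Y}) \to H^{i}(U,\Omega^{p}_{U})$ is surjective provided $i + p \le n - n_{S} - 2$, which is exactly the claimed range. For $p$ outside $[0,n]$ both sheaves vanish (on $Y$; on $U$ the range $i+p \le n-n_S-2$ with $p$ large forces nothing problematic, and $\Omega^p_U = 0$ for $p<0$ or $p>n$ as $U$ is $n$-dimensional), so the statement is trivially true there.

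The only genuine point requiring care — and the step I would flag as the main obstacle — is the identification $\cM_{Y} = \Q^{H}_{Y}[n]$, i.e. that the canonical Hodge module on $Y$ with strict support $Y$ extending the constant variation on the dense open $V$ is the trivial Hodge module. This follows from the fundamental theorem of Hodge modules (Theorem 1.3 in the excerpt): the equivalence $HM_{Y}(Y,n) \simeq VHS_{gen}(Y,0)^{p}$ sends $\Q^{H}_{Y}[n]$ to the trivial variation of Hodge structure, and $\cM_{Y}$ is by definition the object corresponding to $f|_V^*\cM = \Q^H_V[n]$, which generically is the trivial variation; since the equivalence is given by restriction to a small open set and $Y$ is irreducible, the two objects coincide. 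Once this is in hand, the rest is the routine bookkeeping of shifts indicated above, so no further difficulty arises.
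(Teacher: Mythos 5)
Your proposal is correct and follows essentially the same route as the paper: specialize Proposition \ref{cor3.6} to $\cM = \Q^{H}_{U}[n]$ (so $\cM_{Y} = \Q^{H}_{Y}[n]$), use $Gr^{F}_{-p}DR(\Q^{H}_{Z}[n]) \simeq \Omega^{p}_{Z}[n-p]$, and unwind the shift to land in the range $i+p \leq n - n_{S} -2$. Your extra justification that $\cM_{Y} = \Q^{H}_{Y}[n]$ via the equivalence with generic variations of Hodge structure is a detail the paper leaves implicit, not a different argument.
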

\begin{proof}
    Let $\cM$ be the trivial Hodge module $ \mathbb{Q}^{H}_{U}[n]$. Using the same notation as in Proposition \ref{cor3.6}, we have $\cM_{Y}= \mathbb{Q}^{H}_{Y}[n]$. If $i \leq n-n_{S} -2,$ then for $1 \leq p \leq n$ the natural map
    $$H^{i-n}(Y, \Omega^{p}_{Y}[n-p])  \rightarrow  H^{i-n}(U, \Omega^{p}_{U}[n-p])$$
    is surjective.
\end{proof}

\begin{prop}\label{cor3.7}
Assume $\cM \in HM_{U}(U,w)$ is a variation of Hodge structure and let $\cM_{Y} \in HM_{Y}(Y,w)$ be the unique Hodge module that extends $f\vert_{V}^{*}\cM$. Let 
$$p(\cM_{Y}) = \min\{p: F_{p}M_{Y} \neq 0\},$$ 
where $M_{Y}$ is the underlying right $\cD_{Y}$-module of $\cM_{Y}$. If $i \leq n - n_{S} -2$, then 
$$  H^{i -n}(Y, Gr^{F}_{p(\cM_{Y}) +n - i}DR(\cM_{Y})) \cong  H^{i-n}(U, Gr^{F}_{p(\cM_{Y}) + n - i}DR(\cM)).$$
 \end{prop}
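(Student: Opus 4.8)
The plan is to upgrade the surjectivity provided by Proposition \ref{cor3.6} to an isomorphism at this one distinguished value of the grading parameter, by analysing the short exact sequence of mixed Hodge structures that already appears in the proof of Proposition \ref{cor3.6}. Abbreviate $p_{0} = p(\cM_{Y})$ and $p = p_{0} + n - i$, keep the notation $\cH^{1}_{E}(\cM_{Y}) = \iota_{+}\cH^{1}\iota^{!}\cM_{Y}$ and $\cM(*E) = \rho_{+}f\vert_{V}^{*}\cM$ of that proof, and let $\pi : Y \to \{pt\}$. From the proof of Proposition \ref{cor3.6} we have the short exact sequence of mixed Hodge structures
$$0 \to Gr^{W}_{w+i-n}H^{i-n-1}\pi_{+}\cH^{1}_{E}(\cM_{Y}) \to H^{i-n}\pi_{+}\cM_{Y} \to H^{i-n}\pi_{+}\cM(*E) \to 0.$$
Since $Gr^{F}_{p}(\bullet)$ is an exact functor on mixed Hodge structures and, $Y$ being projective, $Gr^{F}_{p}H^{i-n}\pi_{+}\cM_{Y} \cong H^{i-n}(Y, Gr^{F}_{p}DR(\cM_{Y}))$ and $Gr^{F}_{p}H^{i-n}\pi_{+}\cM(*E) \cong H^{i-n}(U, Gr^{F}_{p}DR(\cM))$ by strictness, applying $Gr^{F}_{p}$ identifies the kernel of the surjection of Proposition \ref{cor3.6} with $Gr^{F}_{p}Gr^{W}_{w+i-n}H^{i-n-1}\pi_{+}\cH^{1}_{E}(\cM_{Y})$. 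So it suffices to prove this group vanishes.

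By exactness of $Gr^{F}_{p}$ once more, $Gr^{F}_{p}Gr^{W}_{w+i-n}H^{i-n-1}\pi_{+}\cH^{1}_{E}(\cM_{Y})$ is a subquotient of $Gr^{F}_{p}H^{i-n-1}\pi_{+}\cH^{1}_{E}(\cM_{Y}) \cong H^{i-n-1}(Y, Gr^{F}_{p}DR(\cH^{1}_{E}(\cM_{Y})))$, so I would show that this last hypercohomology group is zero. Apply the functor $Gr^{F}_{p}DR(\bullet)$ to the short exact sequence of mixed Hodge modules $0 \to \cM_{Y} \to \cM(*E) \to \cH^{1}_{E}(\cM_{Y}) \to 0$; since morphisms of mixed Hodge modules are strict for the Hodge filtration, this is an exact functor, so one gets a short exact sequence of complexes in $D^{b}_{coh}(\cO_{Y})$, hence a long exact sequence in hypercohomology. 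By \eqref{gradedEQ}, for any right $\cD_{Y}$-module $\cN$ the complex $Gr^{F}_{p}DR(\cN)$ has its term in cohomological degree $m$ equal to $Gr^{F}_{p+m}N \otimes \wedge^{-m}\Theta_{Y}$, hence is concentrated in degrees $\geq p(\cN) - p$. Applied to $\cN = \cM_{Y}$ this gives degrees $\geq p_{0} - p = i - n$; and the same bound holds for $\cN = \cM(*E)$, because $p(\cM(*E)) = p(\cM_{Y})$ — localising a Hodge module along the divisor $E$ does not lower the bottom index of the Hodge filtration, which is part of Saito's description of the Hodge filtration on $\cM(*E)$ \cite{saito2} and is also visible from the filtered log--de Rham resolution $(\Omega^{\bullet}_{Y}(\log E), \sigma)\otimes(\cH^{\geq 0}_{Y}, F_{\bullet})[n]$ recalled earlier in this section.

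Consequently $H^{i-n-1}(Y, Gr^{F}_{p}DR(\cM_{Y})) = H^{i-n-1}(Y, Gr^{F}_{p}DR(\cM(*E))) = 0$, and the long exact sequence identifies $H^{i-n-1}(Y, Gr^{F}_{p}DR(\cH^{1}_{E}(\cM_{Y})))$ with the kernel of the map
$$H^{i-n}(Y, Gr^{F}_{p}DR(\cM_{Y})) \to H^{i-n}(Y, Gr^{F}_{p}DR(\cM(*E))).$$
Because both complexes are concentrated in degrees $\geq i-n$, each side is the space of global sections of the kernel of the bottom differential, a subsheaf of $Gr^{F}_{p_{0}}M_{Y} \otimes \wedge^{n-i}\Theta_{Y}$, respectively $Gr^{F}_{p_{0}}M(*E)\otimes\wedge^{n-i}\Theta_{Y}$, and the displayed map is induced by the inclusion $Gr^{F}_{p_{0}}M_{Y} \hookrightarrow Gr^{F}_{p_{0}}M(*E)$. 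The latter is injective because $\cM_{Y} \hookrightarrow \cM(*E)$ is a strict morphism of filtered $\cD_{Y}$-modules; hence the displayed map is injective, its kernel vanishes, and the proposition follows. The only input that is not formal bookkeeping with the de Rham complex and the long exact sequence is the invariance of the bottom index $p(\cM(*E)) = p(\cM_{Y})$ of the Hodge filtration, so that is the step where I expect the real content (or the precise citation) to lie.
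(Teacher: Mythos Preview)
Your proof is correct, and it takes a genuinely different route from the paper's.

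The paper argues by a dimension count. Using Hodge symmetry for the pure Hodge module $\cM_{Y}$ on the projective $Y$, it passes to the dual graded index $-p_{0}-w$; at that index the \emph{entire} complex $Gr^{F}_{-p_{0}-w}DR(\cH^{1}_{E}(\cM_{Y}))$ vanishes (using both that the weights of $\cH^{1}_{E}(\cM_{Y})$ are $\geq w+1$ and that $p(\cH^{1}_{E}(\cM_{Y}))\geq p_{0}$, the latter cited from \cite[Prop.~4.7]{ks} or \cite[Prop.~1.8]{AH}), whence $Gr^{F}_{-p_{0}-w}DR(\cM_{Y})\simeq Gr^{F}_{-p_{0}-w}DR(\cM(\ast E))$. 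Corollary~\ref{cor2.7} and the Hodge symmetry of Theorem~\ref{thm3.5} on $U$ then return the dimension to the original index $p_{0}+n-i$.

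You instead stay at $p=p_{0}+n-i$ and kill the kernel of the surjection of Proposition~\ref{cor3.6} directly: both $Gr^{F}_{p}DR(\cM_{Y})$ and $Gr^{F}_{p}DR(\cM(\ast E))$ live in degrees $\geq i-n$, so $H^{i-n-1}$ of the quotient complex $Gr^{F}_{p}DR(\cH^{1}_{E}(\cM_{Y}))$ vanishes. (In fact, once you know the first two complexes start in degree $i-n$, strictness forces the same for the quotient termwise, so your final injectivity paragraph is not strictly needed.) Your argument bypasses both invocations of Hodge symmetry and the appeal to Corollary~\ref{cor2.7}, which is a pleasant economy. The substantive input is the same in both proofs: $p(\cM(\ast E))=p(\cM_{Y})$, equivalently $p(\cH^{1}_{E}(\cM_{Y}))\geq p(\cM_{Y})$, which is precisely the step you flag at the end and which the paper sources from \cite{ks} and \cite{AH}.
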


\begin{proof}
From Proposition \ref{cor3.6} it suffices to show
$$ \dim H^{i -n}(Y, Gr^{F}_{p(\cM_{Y}) +n - i}DR(\cM_{Y})) = \dim H^{i-n}(U, Gr^{F}_{p(\cM_{Y}) + n - i}DR(\cM)).$$
Since $\cM_{Y}$ is a pure polarized Hodge module and $Y$ is projective, there is equality
$$\dim  H^{i -n}(Y, Gr^{F}_{p(\cM_{Y}) +n - i}DR(\cM_{Y})) =  \dim H^{i -n}(Y, Gr^{F}_{-p(\cM_{Y}) -w}DR(\cM_{Y})).$$
Again, since $E$ is a divisor on $Y$ and $\cM_{Y}$ has no non-trivial sub-object supported on $E$, there is a short exact sequence of mixed Hodge modules
$$0 \rightarrow \cM_{Y} \rightarrow \cM(*E) \rightarrow \iota_{+}\cH^{1}\iota^{!}\cM_{Y} \rightarrow 0.$$
Let $\cH^{1}_{E}(\cM_{Y}):=\iota_{+}\cH^{1}\iota^{!}\cM_{Y}$. We have  $Gr^{F}_{-p(\cM_{Y}) - w}DR(\cH^{1}_{E}(\cM_{Y})) \simeq 0$  because $Gr^{W}_{p}(\cH^{1}_{E}(\cM_{Y})) = 0$ for $p < w +1$ and $p(\cM_{Y}) \leq \min\{p: F_{p}\cH^{1}_{E}(M_{Y})) \neq 0\}$ (see \cite[Prop. 4.7]{ks} or \cite[Prop. 1.8]{AH} for more details).  Hence there is a quasi-isomorphism
$$Gr^{F}_{-p(\cM_{Y}) - w}DR(\cM_{Y}) \simeq Gr^{F}_{-p(\cM_{Y}) -w}DR(\cM(\ast E)),$$
and
$$\dim  H^{i -n}(Y, Gr^{F}_{p(\cM_{Y}) +n - i}DR(\cM_{Y})) =  \dim H^{i -n}(Y, Gr^{F}_{-p(\cM_{Y}) -w}DR(\cM(\ast E))) .$$
By Corollary \ref{cor2.7}, we have
$$ \dim H^{i - n}(Y, Gr^{F}_{-p(\cM_{Y})-w}DR(\cM(\ast E))) = \dim H^{i- n}(U, Gr^{F}_{-p(\cM_{Y}) - w}DR(\cM)).$$
By Theorem \ref{thm3.5}, we have the equality
$$ \dim H^{i - n}(U, Gr^{F}_{p(\cM_{Y}) +n - i}DR(\cM)) = \dim H^{i- n}(U, Gr^{F}_{-p(\cM_{Y}) - w}DR(\cM)).$$
Putting all this together, we have the desired result. 

\end{proof}

\begin{cor}\label{cor3.8}\cite{arapura}
If $i \leq n - n_{S} -2,$ then natural map
$$\Gamma(Y, \Omega^{i}_{Y}) \rightarrow \Gamma(U, \Omega^{i}_{U})$$
is an isomorphism.
\end{cor}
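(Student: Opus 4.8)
\emph{Proof proposal.} The plan is to obtain this as the special case $\cM = \Q^{H}_{U}[n]$ of Proposition \ref{cor3.7}. First I would record the relevant invariants of the trivial Hodge module. The pullback $f|_{V}^{*}\Q^{H}_{U}[n]$ is the trivial Hodge module $\Q^{H}_{V}[n]$, and since $Y$ is smooth and irreducible, $\Q^{H}_{Y}[n]$ has strict support $Y$ and restricts to $\Q^{H}_{V}[n]$ on $V$; hence the unique extension is $\cM_{Y} = \Q^{H}_{Y}[n]$. Its underlying right $\cD_{Y}$-module is $\Omega^{n}_{Y}$ with $F_{q}\Omega^{n}_{Y} = \Omega^{n}_{Y}$ for $q \geq -n$ and $0$ otherwise, so $p(\cM_{Y}) = \min\{p : F_{p}M_{Y} \neq 0\} = -n$. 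Therefore the twist appearing in Proposition \ref{cor3.7} is $p(\cM_{Y}) + n - i = -i$.

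Next I would translate both sides of the isomorphism provided by Proposition \ref{cor3.7} into differential forms. Using the identity $Gr^{F}_{-p}DR(\Q^{H}_{X}[n]) = \Omega^{p}_{X}[n-p]$ — valid for $0 \leq p \leq n$, with both sides vanishing when $p < 0$, and noting that in the range of interest $0 \leq i \leq n - n_{S} - 2 \leq n-1$ since $n_{S} \geq -1$ — we get $Gr^{F}_{-i}DR(\cM_{Y}) \simeq \Omega^{i}_{Y}[n-i]$ and $Gr^{F}_{-i}DR(\cM) \simeq \Omega^{i}_{U}[n-i]$. Taking $H^{i-n}$ of these shifted complexes collapses them to global sections: $H^{i-n}(Y, \Omega^{i}_{Y}[n-i]) = \Gamma(Y,\Omega^{i}_{Y})$ and $H^{i-n}(U, \Omega^{i}_{U}[n-i]) = \Gamma(U,\Omega^{i}_{U})$. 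Substituting into Proposition \ref{cor3.7} yields the asserted isomorphism whenever $i \leq n - n_{S} - 2$; for $i < 0$ the statement is vacuous.

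The one point deserving genuine care — which I would verify explicitly — is that the abstract isomorphism coming out of Proposition \ref{cor3.7} really is the geometric restriction map $\Gamma(Y,\Omega^{i}_{Y}) \to \Gamma(U,\Omega^{i}_{U})$. This is a matter of unwinding the construction in Propositions \ref{cor3.6}–\ref{cor3.7}: that map is induced by the canonical morphism of mixed Hodge modules $\cM_{Y} \hookrightarrow \cM(\ast E) = \rho_{+}\cM$ on $Y$, and after applying $Gr^{F}_{p}DR(-)$, pushing forward to a point, and using the quasi-isomorphism $DR(\cM(\ast E)) \simeq \bR\rho_{\ast}DR(\cM)$ together with the isomorphism $f|_{V}: V \xrightarrow{\sim} U$, it becomes precisely restriction of $i$-forms along $V = f^{-1}(U) \hookrightarrow Y$, transported to $U$. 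I expect this compatibility check to be the only nontrivial step; the rest is the bookkeeping above.
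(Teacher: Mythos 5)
Your proposal is correct and follows exactly the paper's own route: specialize Proposition \ref{cor3.7} to $\cM = \Q^{H}_{U}[n]$, note $\cM_{Y} = \Q^{H}_{Y}[n]$ and $p(\cM_{Y}) = -n$, and identify $H^{i-n}$ of $Gr^{F}_{-i}DR \simeq \Omega^{i}[n-i]$ with global sections of $\Omega^{i}$. Your extra compatibility check that the isomorphism is the geometric restriction map is already built into the paper's chain of results, since Proposition \ref{cor3.6} establishes surjectivity of the natural map and Proposition \ref{cor3.7} upgrades it by a dimension count.
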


\begin{proof}
Let $\cM = \mathbb{Q}^{H}_{U}[n]$ be the trivial Hodge module. Using the same notation as in Proposition \ref{cor3.6}, $\cM_{Y} = \mathbb{Q}^{H}_{Y}[n]$  and  $p(\cM_{Y}) = -n$. Provided $i \leq n - n_{S} -2$, the previous proposition gives us 
$$ H^{0}(Y, \Omega^{i}_{Y}) = H^{i-n}(Y,Gr^{F}_{-i}DR(\mathbb{Q}^{H}_{Y}[n])) \cong   H^{i-n}(U,Gr^{F}_{-i}DR(\mathbb{Q}^{H}_{U}[n]))=  H^{0}(U, \Omega^{i}_{U}).$$
 \end{proof}

\section{A Local Vanishing Theorem in Local Cohomology}

Let $X$ and $Y$ be algebraic varieties satisfying the conditions in Setting \ref{set1}. Unless stated otherwise, assume for this section that $f: Y \rightarrow X$ is a proper birational map with $X$ projective and equidimensional. Also, assume $U$ is nonsingular and $f\vert_{V}: V \rightarrow U$ is an isomorphism. Since $n_{Y} = n_{X}$, we will let $n = n_{Y} = n_{X}.$ Recall that in the case $f: Y \rightarrow X$ is a resolution of singularities, we will always assume $S$ contains the singular locus of $X,$ and $S$ does not contain any irreducible component of $X.$ 

\begin{thm} \label{prop1.9} Let $\cF \in D^{b}_{coh}(\cO_{Y})$ and assume $\cF$ is quasi-isomorphic to a complex $\tcF$ with the following properties: 
\begin{enumerate}
\item  $\sigma_{\leq -n -1}\tcF = 0$ and there exists $l \in [0 ,n]$ such that $ \sigma_{> -n+l}\tcF = 0$ \\

\item There exists $q \geq - n + l +1$ such that $R^{i}f_{\ast}\tcF= 0$ for all $i \geq q$ \\

\item  For all $j \in \mathbb{Z},$ the sheaf $\tcF^{j}\vert_{V}$ is locally free on $V$.
\end{enumerate} 
Then  
$$R^{p} f_{\ast}\bR \cH om_{E}(\cF, \omega^{\bullet}_{Y}) = 0 \quad \text{for $p \leq -n_{S} - q$}.$$
\end{thm}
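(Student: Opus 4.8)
The plan is to reduce the statement to Theorem \ref{thm1.5} by a dévissage along the stupid truncation of $\tcF$. The key point is that Theorem \ref{thm1.5} requires the hypothesis $R^{i}f_{\ast}\mathcal{F} = 0$ for all $i \geq q$ applied to an honest coherent sheaf, whereas here we only know it for the complex $\tcF$. So first I would run an induction on the number of nonzero terms of $\tcF$, i.e.\ on $l$. For $l = 0$ the complex $\tcF$ is (quasi-isomorphic to) a single coherent sheaf placed in degree $-n$, condition (2) gives $R^{i}f_{\ast}\tcF^{-n} = 0$ for $i \geq q$, and Theorem \ref{thm1.5} (applied with $\cV = \cO_X$, since the statement to be proved has no twist) yields $Ext^{p}_{E}(\tcF^{-n}, \omega^{\bullet}_{Y}) = 0$ for $p \leq -n_{S} - q$; one then notes $\bR\cH om_E(\cF,\omega^\bullet_Y)$ computes these $Ext$ groups locally and takes $R^pf_*$ — actually it is cleaner to apply the global vanishing of Theorem \ref{thm1.5} after twisting by $\cL^N$ for $\cL$ ample and $N \gg 0$, and then use that $R^pf_*(\cdot)\otimes\cL^N$ is globally generated to deduce the sheaf-level statement, exactly as in the proof of Corollary \ref{cor2.3}.

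For the inductive step, write the stupid-truncation triangle
$$
\begin{tikzcd}
\sigma_{\leq -n} \tcF \arrow[r] & \tcF \arrow[r] & \sigma_{>-n}\tcF \arrow[r, "+1"] & \hfill,
\end{tikzcd}
$$
where $\sigma_{\leq -n}\tcF = \tcF^{-n}[n]$ is a sheaf in degree $-n$ and $\sigma_{>-n}\tcF$ is a complex with one fewer term, living in degrees $-n+1$ through $-n+l$. The sheaf $\tcF^{-n}$ satisfies $R^if_*\tcF^{-n} = 0$ for $i \geq q$: indeed from the triangle we get a long exact sequence relating $R^if_*\tcF^{-n}$, $R^if_*\tcF$, and $R^{i}f_*\sigma_{>-n}\tcF$, and for $i \geq q$ the middle term vanishes by (2) while $R^{i}f_*\sigma_{>-n}\tcF = R^{i+1}f_*(\sigma_{>-n}\tcF)$ after the shift — here I need to be a little careful with the indexing, but the point is that $\sigma_{>-n}\tcF$ is supported in cohomological degrees $\geq -n+1$, so the relevant $R^if_*$ of it also vanishes for $i$ large enough that the shift $q' := q$ still works for the sub-complex; one checks $q' \geq -n + l + 1$ still holds for $\sigma_{>-n}\tcF$ viewed with its own parameter $l' = l-1$ since $q \geq -n+l+1 > -(n-1) + (l-1) + 1$ is not quite it — rather one keeps the same $q$ and observes the hypothesis of the inductive instance is met. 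Then Theorem \ref{thm1.5} handles the $\tcF^{-n}[n]$ term and the induction hypothesis handles $\sigma_{>-n}\tcF$, and applying $R^pf_*\bR\cH om_E(-,\omega^\bullet_Y)$ to the triangle and reading the long exact sequence gives the vanishing for $\tcF$ in the range $p \leq -n_S - q$.

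Two technical points need attention. First, conditions (1) and (3) together ensure $\tcF \in D^b_{coh}$ genuinely restricts well over $V$; condition (3) (local freeness over $V$) is what makes the dualizing-complex manipulations and the identification $\bR\cH om_E(\cF,\omega^\bullet_Y) \simeq \bR\cH om_E(\tcF,\omega^\bullet_Y)$ compatible with the truncation triangles without introducing spurious support on $E$ — though since we are taking $\bR\cH om_E = \bR\underline{\Gamma_E}\bR\cH om$, the portion of $\tcF$ supported away from $E$ is killed anyway, so (3) is mostly needed for the companion Proposition \ref{propext} rather than here. The main obstacle I anticipate is bookkeeping the numerical condition $q \geq -n+l+1$ across the induction: I must verify that when I pass from $\tcF$ (with $l$) to $\sigma_{>-n}\tcF$, the sub-complex still satisfies a hypothesis of the same form with a valid $q$, so that the induction closes and the output range $p \leq -n_S - q$ is preserved rather than degraded. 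Checking that the constant $q$ can be kept fixed throughout — using $q \geq -n+l+1 \geq -(n) + (l-1) + 1$ trivially, so the smaller complex only needs $q$ to exceed a smaller threshold — is the crux, and I expect it to work out cleanly once the indices are tracked carefully.
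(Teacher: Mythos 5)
There is a genuine gap, and it sits exactly where you wave your hands. Your plan is to get the sheaf-level vanishing from the global vanishing of Theorem \ref{thm1.5} by twisting with $\cL^{N}$ and invoking global generation, ``exactly as in the proof of Corollary \ref{cor2.3}.'' But that argument needs $R^{p}f_{\ast}\bR \cH om_{E}(\cF,\omega^{\bullet}_{Y})$ to be a \emph{coherent} sheaf in the range $p\leq -n_{S}-q$: Serre vanishing (to collapse the Leray/hypercohomology spectral sequence for $N\gg 0$) and global generation of twists are theorems about coherent sheaves. In Corollary \ref{cor2.3} coherence is automatic because one pushes forward $Gr^{F}_{p}DR(\cM(!E))\in D^{b}_{coh}$ under a proper map; here it is not, since $\bR \cH om_{E}(\cF,\omega^{\bullet}_{Y})=\bR\underline{\Gamma_{E}}\bR\cH om(\cF,\omega^{\bullet}_{Y})$ has in general only quasi-coherent cohomology. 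Establishing this coherence is precisely the content of Step 1 of the paper's proof: one compares $\bR f_{\ast}\bR\cH om_{E}(\cF,\omega^{\bullet}_{Y})$ with $\bR f_{\ast}\bR\cH om_{Y}(\cF,\omega^{\bullet}_{Y})$ and $\bR j_{\ast}\cG$, where $\cG=f\vert_{V\ast}\bR\cH om_{V}(\cF\vert_{V},\omega^{\bullet}_{V})$, uses hypothesis (3) to write $\cG$ as an explicit complex of locally free sheaves concentrated in the degrees dictated by (1), and then applies Siu's theorem on the coherence of $\cH^{i}_{S}$ of (extensions of) locally free sheaves on the smooth open $U$ for $i<n-n_{S}$; the numerical condition $q\geq -n+l+1$ in (2) is exactly what places the needed range $p\leq -n_{S}-q-1$ inside Siu's range. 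So your aside that condition (3) ``is mostly needed for the companion Proposition rather than here'' is backwards: (1), (3) and the bound in (2) are what make the coherence step work, and without that step your globally-generated argument has no footing. Once coherence is in hand, Theorem \ref{thm1.5} is applied \emph{directly to the complex} $\cF$ (it is stated for objects of $D^{b}_{coh}(\cO_{Y})$), so no d\'evissage to single sheaves is needed.

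The d\'evissage itself also does not close. Hypothesis (2) is a statement about the total complex $\tcF$ and does not descend to the stupid truncations: from the triangle $\sigma_{\leq -n}\tcF\to\tcF\to\sigma_{>-n}\tcF$ you only get that the higher direct images of the two pieces bound each other, not that $R^{i}f_{\ast}\tcF^{-n}[n]=0$ for $i\geq q$. Indeed, in the intended application $\tcF$ is the Saito complex computing $Gr^{F}_{p}DR(\cM(!E))$, with $R^{i}f_{\ast}\tcF=0$ for $i\geq 1$ by Corollary \ref{cor2.3}, while the individual terms $\Omega^{i}_{Y}(\log E)\otimes Gr^{F}_{i+p}\tcL^{>0}$ certainly can have nonvanishing higher direct images; so the base case and the inductive hypothesis of your induction simply fail for the truncated pieces. (The paper does run an induction on the number of terms, but only on the side of $\cG$ over $U$, where the issue is coherence of $R^{p}j_{\ast}$ via Siu, not vanishing of $R^{i}f_{\ast}$.)
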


\begin{proof}
Since $R^{p}f_{\ast}\bR \cH om_{E}(\cF, \omega^{\bullet}_{Y}) \cong R^{p} f_{\ast}\bR \cH om_{E}(\tcF, \omega^{\bullet}_{Y})$, we may assume $\cF = \tcF$.  \\

$\underline{\text{Step 1}}$: We first show $R^{p} f_{\ast}\bR \cH om_{E}(\cF, \omega^{\bullet}_{Y})$ is a coherent sheaf on $X$ for $p \leq -n_{S} -q$. 
We have a distinguished triangle
$$\begin{tikzcd}
\bR \cH om_{E}(\cF, \omega^{\bullet}_{Y}) \arrow[r] & \bR \cH om_{Y}(\cF, \omega^{\bullet}_{Y}) \arrow[r] & \bR \rho_{\ast} \rho^{\ast}\bR \cH om_{Y}(\cF, \omega^{\bullet}_{Y}) \arrow[r, "+1"] & \hfill.
\end{tikzcd}$$
Applying the right derived functor $\bR f_{\ast}$ to this triangle we obtain the exact triangle
$$\begin{tikzcd}
\bR f_{\ast}\bR \cH om_{E}(\cF, \omega^{\bullet}_{Y}) \arrow[r] & \bR f_{\ast}\bR \cH om_{Y}(\cF, \omega^{\bullet}_{Y}) \arrow[r] & \bR f_{\ast}\bR \rho_{\ast} \rho^{\ast}\bR \cH om_{Y}(\cF, \omega^{\bullet}_{Y}) \arrow[r, "+1"] & \hfil.
\end{tikzcd}$$
Since  $f\vert_{V}:V \rightarrow U$ is an isomorphism, we obtain the quasi-isomorphisms
$$\bR f_{\ast}\bR \rho_{\ast} \rho^{\ast}\bR \cH om_{Y}(\cF, \omega^{\bullet}_{Y}) \simeq \bR f_{\ast} \bR \rho_{\ast} \bR \cH om_{V}(\cF\vert_{V}, \omega^{\bullet}_{V}) \simeq \bR j_{\ast} f\vert_{V \ast}\bR \cH om_{V}(\cF\vert_{V}, \omega^{\bullet}_{V}).$$
For notation, let $ \cG := f\vert_{V \ast}\bR \cH om_{V}(\cF\vert_{V}, \omega^{\bullet}_{V}) \in D^{b}_{coh}(\cO_{U})$. Then, there is a distinguished triangle
$$
\begin{tikzcd}
\bR f_{\ast}\bR \cH om_{E}(\cF, \omega^{\bullet}_{Y}) \arrow[r] & \bR f_{\ast}\bR \cH om_{Y}(\cF, \omega^{\bullet}_{Y}) \arrow[r] & \bR j_{\ast}\cG \arrow[r, "+1"] & \hfill.
\end{tikzcd}$$
Since $\cF \in D^{b}_{coh}(\cO_{Y})$ and $f:Y \rightarrow X$ is proper,  $ \bR f_{\ast}\bR \cH om_{Y}(\cF, \omega^{\bullet}_{Y}) \in D^{b}_{coh}(\cO_{X})$. Therefore, to show $R^{p} f_{\ast}\bR \cH om_{E}(\cF, \omega^{\bullet}_{Y})$ is coherent on $X$ for $p \leq -n_{S} - q $, it suffices to show $R^{p}j_{\ast}\cG$ is coherent for $p \leq - n_{S} - q -1 $.

By our assumptions, the complex $\cF$ is equal to
$$\cdots \rightarrow 0 \rightarrow \cF^{-n} \rightarrow \cF^{-n+1} \rightarrow \cdots \rightarrow \cF^{ -n+l} \rightarrow 0 \rightarrow \cdots.$$
 Since $V$ is a smooth  equidimensional variety and $\cF^{j}$ is locally free, the complex $\bR \cH om_{V}(\cF\vert_{V}, \omega^{\bullet}_{V})$ is given by
$$\cdots \rightarrow 0 \rightarrow (\cF^{-n+l}\vert_{V})^{\vee} \otimes \Omega^{n}_{V} \rightarrow (\cF^{ -n +l -1}\vert_{V})^{\vee}\otimes \Omega^{n}_{V} \rightarrow \cdots \rightarrow (\cF^{-n}\vert_{V})^{\vee} \otimes \Omega^{n}_{V} \rightarrow 0 \rightarrow \cdots.$$
The sheaf $(\cF^{j}\vert_{V})^{\vee} \otimes \Omega^{n}_{V}$ is in the $-(j +n)^{th}$ spot for $j \in [-n, -n +l]$. Pushing forward this complex onto $U$ by $f\vert_{V \ast}$, we obtain the complex $\cG$. By \cite[Lemma I.7.2]{Hart1}, there is an exact sequence of complexes given by
$$ 0 \rightarrow \sigma_{> -l}\cG \rightarrow \cG \rightarrow \cG^{-l}[l] \rightarrow 0.$$
By applying $\bR j_{\ast}$ to the exact sequence above, there is an exact triangle
$$\begin{tikzcd}
 \bR j_{\ast}\sigma_{> -l}\cG \arrow[r] & \bR j_{\ast}\cG \arrow[r]& \bR j_{\ast}\cG^{-l}[l] \arrow[r, "+1"] & \hfill. 
 \end{tikzcd}$$
First, we show $R^{p}j_{\ast}\cG^{-l}[l]$ is coherent for $p \leq - n_{S} -q -1.$ Which is equivalent to showing $R^{p}j_{\ast}\cG^{-l}$ is coherent for $p \leq  l- n_{S}-q  - 1.$ Note that we have the inequality 
$$p \leq  l- n_{S}-q  - 1 \leq n -n_{S} -2$$ 
because of the second assumption of this proposition. The sheaf $\cG^{-l} $ is coherent on $U$. It is well known that there is a coherent sheaf $\cG'$ on $X$ such that $\cG'\vert_{U} \cong \cG^{-l}$ \cite[II. 5]{Hart4}. Therefore, there exists a distinguished triangle 
$$\begin{tikzcd}
\bR \underline{\Gamma_{S}}(\cG') \arrow[r] & \cG' \arrow[r] & \bR j_{\ast}\cG^{-l} \arrow[r, "+1"] & \hfill.
\end{tikzcd}$$
The local cohomology sheaves $\cH^{i}_{S}(\cG')$ are coherent for $i < n -n_{S}$ because $\cG^{-l}$ is locally free on the smooth variety $U$ \cite{Siu} (see also \cite[VI. vi]{Hart3}). Therefore, the sheaf $R^{p}j_{\ast}\cG^{-l}$ is coherent for $p \leq n - n_{S} -2$. Hence $R^{p}j_{\ast}\cG^{-l}$ is coherent for $p \leq l -n_{S} -q -1$. Now by induction on $l$, $R^{p}j_{*}\sigma_{>-l}\cG$ is coherent for $p \leq -n_{S} - q -1$. Hence $R^{p}j_{*}\cG$ is coherent for $p \leq -n_{S} - q -1$.\\

$\underline{\text{Step 2}}$: Let $\cL$ be a very ample sheaf on $X$. There is a spectral sequence
$$E^{m,p}_{2}= H^{m}(X, R^{p}f_{\ast}\bR \cH om_{E}(\cF, \omega^{\bullet}_{Y}) \otimes \cL^{N}) \Rightarrow Ext^{p +m}_{E}(\cF \otimes f^{\ast}\cL^{-N}, \omega^{\bullet}_{Y}).$$
By Step 1, $R^{p} f_{\ast}\bR \cH om_{E}(\cF, \omega^{\bullet}_{Y})$ is coherent for $p \leq -n_{S} -q$. If $p \leq -n_{S} -q$ and $N \gg 0$, then 
$$E^{m,p}_{2} = H^{m}(X, R^{p} f_{\ast}\bR \cH om_{E}(\cF, \omega^{\bullet}_{Y}) \otimes \cL^{N}) = 0 \quad \text{for $m > 0$}.$$
Note that $E^{m,p}_{2} = 0$ whenever $m<0$. So, for $N \gg 0$ and  $p \leq -n_{S} -q$, we have 
$$E^{0, p}_{2} =\Gamma(X, R^{p}f_{\ast}\bR \cH om_{E}(\cF, \omega^{\bullet}_{Y}) \otimes \cL^{N})  \cong  Ext^{p}_{E}(\cF \otimes f^{\ast}\cL^{-N}, \omega^{\bullet}_{Y}).  $$
By Theorem \ref{thm1.5},  $Ext^{p}_{E}(\cF \otimes f^{\ast}\cL^{-N}, \omega^{\bullet}_{Y}) = 0$ for $p \leq -n_{S} - q$. Therefore, since the sheaf 
$R^{p} f_{\ast}\bR \cH om_{E}(\cF, \omega^{\bullet}_{Y}) \otimes \cL^{N}$ is globally generated for $N \gg 0$, we can conclude $$R^{p}f_{\ast}\bR \cH om_{E}(\cF, \omega^{\bullet}_{Y})  = 0 \quad  \text{for $p \leq -n_{S} -q$}.$$ 

\end{proof}

\begin{cor}\label{cor1.10}
With the same assumptions as Theorem \ref{prop1.9}, the natural map
$$ R^{p} f_{\ast}\bR \cH om_{Y}(\cF, \omega^{\bullet}_{Y})\rightarrow R^{p}j_{\ast}\bR \cH om_{U}(f\vert_{V \ast}\cF\vert_{V}, \Omega^{n}_{U}[n])$$
is an isomorphism for $p \leq -n_{S} -q -1$ and injective for $p = -n_{S} - q$.
\end{cor}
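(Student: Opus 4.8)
The plan is to deduce the statement formally from the distinguished triangle already produced in the proof of Theorem \ref{prop1.9}, together with the vanishing established there. Recall that in that proof one builds the distinguished triangle
$$\begin{tikzcd}
\bR f_{\ast}\bR \cH om_{E}(\cF, \omega^{\bullet}_{Y}) \arrow[r] & \bR f_{\ast}\bR \cH om_{Y}(\cF, \omega^{\bullet}_{Y}) \arrow[r] & \bR j_{\ast}\cG \arrow[r, "+1"] & \hfill,
\end{tikzcd}$$
where $\cG = f|_{V\ast}\bR\cH om_{V}(\cF|_{V},\omega^{\bullet}_{V})$ and the middle-to-right map is the natural one, obtained by applying $\bR f_{\ast}$ to $\bR\cH om_{Y}(\cF,\omega^{\bullet}_{Y})\to \bR\rho_{\ast}\rho^{\ast}\bR\cH om_{Y}(\cF,\omega^{\bullet}_{Y})$ and then using the base-change identifications of Lemma \ref{lemma1.3}. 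First I would rewrite the right-hand term: since $V$ is smooth and equidimensional of dimension $n$ we have $\omega^{\bullet}_{V}\simeq \Omega^{n}_{V}[n]$, and since $f|_{V}\colon V\xrightarrow{\ \sim\ }U$ is an isomorphism, $f|_{V\ast}\bR\cH om_{V}(\cF|_{V},\Omega^{n}_{V}[n])\cong \bR\cH om_{U}(f|_{V\ast}\cF|_{V},\Omega^{n}_{U}[n])$. Hence $\bR j_{\ast}\cG\simeq \bR j_{\ast}\bR\cH om_{U}(f|_{V\ast}\cF|_{V},\Omega^{n}_{U}[n])$ and the middle-to-right map of the triangle is exactly the natural map in the statement.

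Next I would pass to the long exact sequence of cohomology sheaves attached to this triangle,
$$\cdots \to R^{p}f_{\ast}\bR\cH om_{E}(\cF,\omega^{\bullet}_{Y}) \to R^{p}f_{\ast}\bR\cH om_{Y}(\cF,\omega^{\bullet}_{Y}) \to R^{p}j_{\ast}\cG \to R^{p+1}f_{\ast}\bR\cH om_{E}(\cF,\omega^{\bullet}_{Y}) \to \cdots,$$
and invoke Theorem \ref{prop1.9}, which gives $R^{p}f_{\ast}\bR\cH om_{E}(\cF,\omega^{\bullet}_{Y})=0$ for every $p\le -n_{S}-q$. For $p\le -n_{S}-q-1$ both flanking terms (indices $p$ and $p+1$) vanish, so the natural map is an isomorphism; for $p=-n_{S}-q$ only the left flanking term vanishes, so the map is injective. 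This completes the argument.

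The only genuinely delicate point — and the one I would spend words on — is the identification of the map: verifying that the morphism coming out of the triangle coincides with the ``natural map'' intended by the authors, i.e. that the flatness base-change isomorphism $j^{\ast}\bR f_{\ast}\simeq \bR f|_{V\ast}\rho^{\ast}$ from Lemma \ref{lemma1.3} and the compatibility of Grothendieck--Verdier duality with the open immersion $\rho$ are taken to be the canonical ones. Once that bookkeeping is settled there is no further content: the corollary is a purely formal consequence of the triangle in the proof of Theorem \ref{prop1.9} and the vanishing in its conclusion.
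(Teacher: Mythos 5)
Your argument is correct and is essentially the paper's own proof: the paper likewise deduces the corollary from the distinguished triangle $\bR f_{\ast}\bR \cH om_{E}(\cF, \omega^{\bullet}_{Y}) \rightarrow \bR f_{\ast}\bR \cH om_{Y}(\cF, \omega^{\bullet}_{Y}) \rightarrow \bR f_{\ast}\bR\rho_{\ast}\rho^{\ast}\bR \cH om_{Y}(\cF, \omega^{\bullet}_{Y}) \xrightarrow{+1}$ constructed in the proof of Theorem \ref{prop1.9}, together with the vanishing $R^{p}f_{\ast}\bR\cH om_{E}(\cF,\omega^{\bullet}_{Y})=0$ for $p\le -n_{S}-q$. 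Your extra care in identifying the third term with $\bR j_{\ast}\bR\cH om_{U}(f\vert_{V\ast}\cF\vert_{V},\Omega^{n}_{U}[n])$ and in reading off the isomorphism/injectivity ranges from the long exact sequence simply makes explicit what the paper leaves implicit.
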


\begin{proof}
This follows from the previous theorem and the exact triangle
$$\begin{tikzcd}
\bR f_{\ast}\bR \cH om_{E}(\cF, \omega^{\bullet}_{Y}) \arrow[r] & \bR f_{\ast}\bR \cH om_{Y}(\cF, \omega^{\bullet}_{Y}) \arrow[r] & \bR f_{\ast}\bR \rho_{\ast} \rho^{\ast}\bR \cH om_{Y}(\cF, \omega^{\bullet}_{Y}) \arrow[r, "+1"] & \hfil.
\end{tikzcd}$$
\end{proof}

\subsection{Higher Direct Images of Log Forms for Projective Varieties} Assume $f:Y \rightarrow X$ is a strong log resolution of $S$.
Let $\bH= ((\cL, F^{\bullet}), L)$ be a variation of Hodge structure on $V$ and let $\cM \in HM_{V}(V,w)$ be the corresponding Hodge module on $V.$ It was shown by Saito \cite[Prop. 3.11]{saito2} that the complex $Gr^{F}_{p}DR(\cM(!E))$ is quasi-isomorphic to the following complex, 
$$\tcF:=[0 \rightarrow \cO_{Y} \otimes Gr^{F}_{p}\tcL^{>0} \rightarrow \Omega^{1}_{Y}(\log E) \otimes Gr^{F}_{p+1}\tcL^{>0} \rightarrow \cdots \rightarrow \Omega^{n}_{Y}(E)\otimes Gr^{F}_{n +p}\tcL^{>0} \rightarrow 0][n],$$
where $\tcL^{>0}$ is the lattice of Deligne's regular singular meromorphic extension of $\cL$ such that the eigenvalues of $res(\nabla)$ along the irreducible components of $E$ are contained in $(0, 1].$ Note that 
$$\Omega^{i}_{Y}(\log E) \otimes Gr^{F}_{i +p}\tcL^{>0}$$
is in the $-(n-i)^{th}$-spot. Hence $$\sigma_{\leq -n-1}\tcF = 0 \quad \text{and}  \quad \sigma_{>0}\tcF = 0.$$ 
By Corollary \ref{cor2.3} $R^{i}f_{\ast}Gr^{F}_{p}DR(\cM(! E)) = 0 $ whenever $i \geq 1.$ Finally, for all $i\in \mathbb{Z}$ the sheaves $$(\Omega^{i}_{Y}(\log E) \otimes Gr^{F}_{i +p}\tcL^{>0})\vert_{V} \cong \Omega^{i}_{V} \otimes Gr_{F}^{-i -p}\cL$$
are locally free on $V$. Therefore, $\tcF$ satisfies the three conditions of Theorem \ref{prop1.9} with $l= n$ and $q =1.$

\begin{prop}\label{prop4.1}
With the assumptions above, the natural map
$$R^{i}f_{\ast}Gr^{F}_{-p}DR(\cM(\ast E)) \rightarrow R^{i}j_{\ast}f|_{V \ast}Gr^{F}_{-p }DR(\cM)$$

is an isomorphism for $i \leq -n_{S} -2$ and injective for $i = -n_{S} -1$. 
\end{prop}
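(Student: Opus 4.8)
The plan is to read the statement off from Corollary \ref{cor1.10}, applied to $\cF:=Gr^{F}_{p}DR(\cM(!E))$, after translating both sides of that corollary through the duality of Proposition \ref{dualprop}. The paragraph preceding this proposition already verifies that $\cF$ is quasi-isomorphic to a complex satisfying the three hypotheses of Theorem \ref{prop1.9} with $l=n$ and $q=1$ (and this holds for every $p\in\Z$), so Corollary \ref{cor1.10} immediately gives that the natural map
\[
R^{i}f_{\ast}\,\bR\cH om_{Y}(\cF,\omega^{\bullet}_{Y})\longrightarrow R^{i}j_{\ast}\,\bR\cH om_{U}\!\big(f|_{V\ast}(\cF|_{V}),\,\Omega^{n}_{U}[n]\big)
\]
is an isomorphism for $i\le -n_{S}-2$ and injective for $i=-n_{S}-1$. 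What remains is to identify each side with a graded de Rham complex.

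For the source I would argue as follows. By Proposition \ref{dualprop}, $\bR\cH om_{Y}(Gr^{F}_{p}DR(\cM(!E)),\omega^{\bullet}_{Y})\simeq Gr^{F}_{-p}DR(\D(\cM(!E)))$. Since $\rho$ is an open immersion, duality interchanges $\rho_{!}$ and $\rho_{+}$, so $\D(\cM(!E))=\D\rho_{!}\cM\simeq\rho_{+}\D\cM$; and since $\cM$ is pure polarized of weight $w$, a polarization gives $\D\cM\cong\cM(w)$, whence $\D(\cM(!E))\cong(\rho_{+}\cM)(w)=\cM(\ast E)(w)$. Using $Gr^{F}_{a}DR(\cN(w))=Gr^{F}_{a-w}DR(\cN)$ this yields $\bR\cH om_{Y}(\cF,\omega^{\bullet}_{Y})\simeq Gr^{F}_{-p-w}DR(\cM(\ast E))$. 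For the target I would use that $\cF|_{V}=Gr^{F}_{p}DR(\cM)$ (because $E\cap V=\emptyset$), that $f|_{V}\colon V\to U$ is an isomorphism, and that $\Omega^{n}_{U}[n]\simeq\omega^{\bullet}_{U}$; then the corollary following Proposition \ref{dualprop}, applied to the pure Hodge module $\cM$ on $U$, gives $\bR\cH om_{U}\big(f|_{V\ast}(\cF|_{V}),\Omega^{n}_{U}[n]\big)\simeq f|_{V\ast}Gr^{F}_{-p-w}DR(\cM)$. Substituting these into Corollary \ref{cor1.10} and relabelling the free index (replacing $p$ by $p+w$, which is harmless since $p$ ranges over all of $\Z$) would produce exactly the asserted map for $Gr^{F}_{-p}DR(\cM(\ast E))$ in the stated range.

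The main obstacle I anticipate is checking that, under the two duality identifications above, the natural map furnished by Corollary \ref{cor1.10} — induced by the localization triangle $\bR\cH om_{E}\to\bR\cH om_{Y}\to\bR\rho_{\ast}\rho^{\ast}\bR\cH om_{Y}$ — is carried precisely to the adjunction map $R^{i}f_{\ast}Gr^{F}_{-p}DR(\cM(\ast E))\to R^{i}j_{\ast}f|_{V\ast}Gr^{F}_{-p}DR(\cM)$ coming from $\cM(\ast E)=\rho_{+}\cM\to\bR\rho_{\ast}\rho^{\ast}\rho_{+}\cM$. This is a compatibility of natural transformations: $\D$ and $Gr^{F}_{\bullet}DR$ are functorial, $\D$ interchanges the adjunction $\rho_{!}\dashv\rho^{\ast}$ with $\rho^{\ast}\dashv\rho_{+}$, and Grothendieck duality intertwines $\bR f_{\ast}$ with itself via $\omega^{\bullet}$; chasing these through the localization triangle should show the relevant square of natural transformations commutes. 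I expect this to be routine, if slightly tedious, bookkeeping; the remaining ingredients — the vanishing from Theorem \ref{prop1.9}, the computation of $\cF|_{V}$, and the re-indexing — are immediate.
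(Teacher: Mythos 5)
Your proposal is correct and follows essentially the same route as the paper: apply Corollary \ref{cor1.10} to the graded de Rham complex of a $!$-extension (the hypotheses of Theorem \ref{prop1.9} with $l=n$, $q=1$ being checked in the preceding paragraph) and identify both sides via Proposition \ref{dualprop} and the polarization isomorphism $\D\cM\cong\cM(w)$. The only cosmetic difference is that the paper feeds $Gr^{F}_{p}DR(\D(\cM)(!E))$ into the corollary so the indices come out directly, while you use $Gr^{F}_{p}DR(\cM(!E))$ and absorb the Tate twist by relabelling $p$, which is harmless.
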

 \begin{proof}
  By Corollary \ref{cor1.10} and the discussion above, the natural map
  $$ R^{i}f_{*}\bR \cH om_{Y}(Gr^{F}_{p}DR(\D(\cM)(!E), \omega^{\bullet}_{Y}) \rightarrow R^{i}j_{*}f\vert_{V*}\bR \cH om_{V}(Gr^{F}_{p}DR(\cM(w)), \omega^{\bullet}_{V})$$
  is an isomorphism for $i \leq -n_{S} -2$ and injective for $i = -n_{S}-1.$ Since $\cM$ is pure of weight $w,$ there is a quasi-isomorphism
  $$\bR \cH om_{V}(Gr^{F}_{p}DR(\cM(w)), \omega^{\bullet}_{V}) \simeq Gr^{F}_{-p}DR(\cM).$$ Furthermore, there is a quasi-isomorphism 
  $$\bR \cH om_{Y}(Gr^{F}_{p}DR(\D(\cM)(!E), \omega^{\bullet}_{Y}) \simeq Gr^{F}_{-p}DR(\cM(\ast E)).$$ Thus, the proposition holds from the following commutative diagram,
  $$\begin{tikzcd} R^{i}f_{*}\bR \cH om_{Y}(Gr^{F}_{p}DR(\D(\cM)(!E), \omega^{\bullet}_{Y}) \arrow[r, "\cong"] \arrow[d] & R^{i}f_{*}Gr^{F}_{-p}DR(\cM(\ast E)) \arrow[d] \\
   R^{i}j_{*}f\vert_{V*}\bR \cH om_{V}(Gr^{F}_{p}DR(\cM(w)), \omega^{\bullet}_{V})\arrow[r, "\cong"] & R^{i}j_{*}f\vert_{V*}Gr^{F}_{-p}DR(\cM). \end{tikzcd}$$
 \end{proof}

\begin{cor}\label{cor4.2}
 If $f:Y \rightarrow X$ is a strong log resolution of $S$, then the natural map
$$R^{i}f_{\ast}\Omega^{p}_{Y}(\log E) \rightarrow R^{i}j_{\ast}\Omega^{p}_{U}$$
is an isomorphism for $p +i \leq n - n_{S} -2$ and injective for $p +i = n- n_{S} - 1$. 
\end{cor}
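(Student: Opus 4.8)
The plan is to obtain this as a direct specialization of Proposition \ref{prop4.1} to the trivial Hodge module. Take $\cM = \Q^{H}_{V}[n] \in HM_{V}(V,n)$, which under the isomorphism $f|_{V}:V \to U$ corresponds to $\Q^{H}_{U}[n]$, so that $w = n$ and the extensions are $\cM(!E) = \rho_{!}\Q^{H}_{V}[n]$ and $\cM(\ast E) = \rho_{+}\Q^{H}_{V}[n]$. First I would recall the graded de Rham identifications already used in the proof of Corollary \ref{cor2.4}: for $0 \le p \le n$ there are quasi-isomorphisms
$$Gr^{F}_{-p}DR(\rho_{+}\Q^{H}_{V}[n]) \simeq \Omega^{p}_{Y}(\log E)[n-p] \quad\text{and}\quad Gr^{F}_{-p}DR(\Q^{H}_{U}[n]) \simeq \Omega^{p}_{U}[n-p],$$
together with $f|_{V\ast}\Omega^{p}_{V} \cong \Omega^{p}_{U}$, since $f|_{V}$ is an isomorphism.

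Plugging these into Proposition \ref{prop4.1}, the natural map $R^{i}f_{\ast}(\Omega^{p}_{Y}(\log E)[n-p]) \to R^{i}j_{\ast}(\Omega^{p}_{U}[n-p])$ is an isomorphism for $i \le -n_{S}-2$ and injective for $i = -n_{S}-1$. Since $\cH^{i}\big((\bR f_{\ast}\cG)[n-p]\big) = R^{\,i+n-p}f_{\ast}\cG$ for a coherent sheaf $\cG$, rewriting source and target and replacing the index $i$ by $i-(n-p)$ converts the two ranges into: isomorphism for $p+i \le n-n_{S}-2$ and injectivity for $p+i = n-n_{S}-1$. This is the corollary for $0 \le p \le n$. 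For $p<0$ or $p>n$ the statement is vacuous, since $R^{i}j_{\ast}\Omega^{p}_{U}$ and $R^{i}f_{\ast}\Omega^{p}_{Y}(\log E)$ then vanish (and, as only $i \ge 0$ contributes, the relevant range forces $p \le n-n_{S}-1 \le n$ anyway).

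The one step needing care — and the closest thing to an obstacle — is checking that the map furnished by Proposition \ref{prop4.1} agrees, under the quasi-isomorphisms above, with the genuine restriction map $R^{i}f_{\ast}\Omega^{p}_{Y}(\log E) \to R^{i}j_{\ast}\Omega^{p}_{U}$ coming from $\Omega^{\bullet}_{Y}(\log E) \to \bR\rho_{\ast}\Omega^{\bullet}_{V}$ and $f \circ \rho = j \circ f|_{V}$. This is a compatibility of natural transformations: it follows from the functoriality of $Gr^{F}_{p}DR(-)$, its commutation with proper direct images (\cite[Prop. 4.10]{ks}, as used throughout Section 2), and the identification of the adjunction unit for $\rho$ with the inclusion of the (log) de Rham complexes; one simply checks the resulting square commutes. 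Everything else is shift bookkeeping.
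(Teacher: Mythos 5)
Your proposal is correct and is essentially the paper's own proof: the paper also specializes Proposition \ref{prop4.1} to $\cM = \Q^{H}_{V}[n]$, uses the identifications $Gr^{F}_{-p}DR(\cM(\ast E)) \simeq \Omega^{p}_{Y}(\log E)[n-p]$ and $Gr^{F}_{-p}DR(\cM) \simeq \Omega^{p}_{V}[n-p]$, and adjusts the shift. Your additional remarks on the compatibility of the natural maps and the vacuous range of $p$ are sensible bookkeeping the paper leaves implicit.
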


\begin{proof}
If we let $\cM = \Q^{H}_{V}[n]$, then $Gr^{F}_{-p}DR(\cM(\ast E)) = \Omega^{p}_{Y}(\log E)[n-p]$ and $Gr^{F}_{-p}DR(\cM) = \Omega^{p}_{V}[n-p]$. Now, apply the previous proposition.  
\end{proof}

We need the following definition from \cite{Hart2} before getting into the following application. 

\begin{defn}
 Let $Z$ be a local Noetherian scheme, $\Delta$ a closed subset of $Z$, and $F$ a coherent sheaf on $Z$. Then $depth_{\Delta}F$ is the $\displaystyle \inf_{z \in \Delta}($depth $F_{z})$. 
\end{defn}

\begin{cor}\label{cor4.4}
Let $S$ be the singular locus of $X$. For any resolution of singularities $f:Y \rightarrow X,$ 
$$R^{i}f_{\ast}\cO_{Y} = 0 \quad \text{for $1 \leq i \leq depth_{S}\cO_{X} -2$}.$$
\end{cor}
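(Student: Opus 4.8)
The plan is to reduce to the case of a strong log resolution, apply Corollary \ref{cor4.2} with $p=0$, and then reinterpret the resulting vanishing of $R^{i}j_{\ast}\cO_{U}$ as vanishing of local cohomology of $\cO_{X}$ along $S$, which is where $depth_{S}\cO_{X}$ enters. For the reduction: since $X$ is reduced and equidimensional, its singular locus $S$ is a proper closed subset, so it contains no irreducible component of $X$ and $U=X\setminus S$ is smooth; thus the hypotheses of Setting \ref{set1} hold and a strong log resolution $f_{0}:Y_{0}\to X$ of $S$ exists by \cite{hironaka}. Given an arbitrary resolution $f:Y\to X$, I would choose a smooth variety $W$ with proper birational morphisms to both $Y$ and $Y_{0}$ commuting with the maps to $X$ (e.g.\ a resolution of the closure of the graph of the birational map between $Y$ and $Y_{0}$ over $X$, inside $Y\times_{X}Y_{0}$). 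Since smooth varieties have rational singularities, $\bR h_{\ast}\cO_{W}\simeq\cO_{Y}$ for the proper birational morphism $h:W\to Y$ (and likewise over $Y_{0}$), so $\bR f_{\ast}\cO_{Y}\simeq\bR (f_{0})_{\ast}\cO_{Y_{0}}$ and in particular $R^{i}f_{\ast}\cO_{Y}\cong R^{i}(f_{0})_{\ast}\cO_{Y_{0}}$ for all $i$. Hence it suffices to treat $f=f_{0}$ a strong log resolution of $S$.

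For such an $f$, Corollary \ref{cor4.2} with $p=0$ (so that $\Omega^{0}_{Y}(\log E)=\cO_{Y}$ and $\Omega^{0}_{U}=\cO_{U}$) says that the natural map $R^{i}f_{\ast}\cO_{Y}\to R^{i}j_{\ast}\cO_{U}$ is an isomorphism for $i\le n-n_{S}-2$. Applying cohomology sheaves to the distinguished triangle $\bR\underline{\Gamma_{S}}\cO_{X}\to\cO_{X}\to\bR j_{\ast}\cO_{U}\xrightarrow{+1}$ and using that $\cO_{X}$ is concentrated in degree $0$, the long exact sequence gives $R^{i}j_{\ast}\cO_{U}\cong\cH^{i+1}_{S}(\cO_{X})$ for every $i\ge 1$.

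Now I would invoke Grothendieck's description of depth via local cohomology (see e.g.\ \cite{Hart3}): $\cH^{k}_{S}(\cO_{X})=0$ for all $k<depth_{S}\cO_{X}$. To see the ranges match up, note that at the generic point $\eta$ of a maximal-dimensional component of $S$ one has $depth\,\cO_{X,\eta}\le\dim\cO_{X,\eta}=n-n_{S}$ by equidimensionality of $X$, hence $depth_{S}\cO_{X}\le n-n_{S}$. Therefore, if $1\le i\le depth_{S}\cO_{X}-2$, then also $i\le n-n_{S}-2$, so $R^{i}f_{\ast}\cO_{Y}\cong\cH^{i+1}_{S}(\cO_{X})$ by the previous two steps, while $i+1\le depth_{S}\cO_{X}-1<depth_{S}\cO_{X}$ forces $\cH^{i+1}_{S}(\cO_{X})=0$. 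This yields $R^{i}f_{\ast}\cO_{Y}=0$ in the asserted range.

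I expect the only genuine subtlety to be the reduction in the first paragraph: one must know that $R^{i}f_{\ast}\cO_{Y}$ is independent of the chosen resolution, since Corollary \ref{cor4.2} was established only for strong log resolutions. Aside from that, everything is bookkeeping---Corollary \ref{cor4.2}, the local cohomology triangle for $j:U\hookrightarrow X$, the vanishing $\cH^{k}_{S}(\cO_{X})=0$ for $k$ below the depth, and the elementary bound $depth_{S}\cO_{X}\le n-n_{S}$, which guarantees the depth range lies inside the range where Corollary \ref{cor4.2} applies.
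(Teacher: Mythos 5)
Your proposal is correct and follows essentially the same route as the paper: reduce to a strong log resolution via the resolution-independence of $\bR f_{\ast}\cO_{Y}$, apply Corollary \ref{cor4.2} with $p=0$, identify $R^{i}j_{\ast}\cO_{U}$ with $\cH^{i+1}_{S}(\cO_{X})$ for $i\geq 1$, and conclude by the vanishing of local cohomology below $depth_{S}\cO_{X}$. The only cosmetic difference is that the paper gets by with the injectivity of $R^{i}f_{\ast}\cO_{Y}\to R^{i}j_{\ast}\cO_{U}$ in the slightly larger range $i\leq n-n_{S}-1$, whereas you use the isomorphism range and explicitly check $depth_{S}\cO_{X}\leq n-n_{S}$, a useful bit of bookkeeping the paper leaves implicit.
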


\begin{proof}
The complex $\bR f_{\ast}\cO_{Y}$ is independent of the resolution. So, we may assume $f:Y \rightarrow X$ is a strong log resolution. By the previous corollary, the map $R^{i}f_{\ast}\cO_{Y} \rightarrow R^{i}j_{\ast}\cO_{U}$ is always injective for $i \leq n -n_{S} -1$. For $i\geq 1$ there is an isomorphism $R^{i}j_{\ast}\cO_{U} \cong \cH^{i+1}_{S}(\cO_{X})$. The corollary therefore holds since the sheaf $\cH^{i +1}_{S}(\cO_{X})$ is zero for $i +1 < depth_{S}\cO_{X}$ \cite[Thm. 3.8]{Hart2}.
\end{proof}

\begin{rmk}
If $\tilde{S}$ is the the smallest closed subset of $X$ such that $X \backslash \tilde{S}$ has rational singularities, then we can also show
$$R^{i}f_{\ast}\cO_{Y} = 0 \quad \text{for $1 \leq i \leq depth_{\tilde{S}}\cO_{X} -2$}.$$
This can be shown by applying Proposition \ref{prop1.9} and Corollary \ref{cor1.10}. Let $\tilde{V} = Y \backslash f^{-1}(\tilde{S})$ and $\tilde{U} = X \backslash \tilde{S}$ with the induced map $g: \tilde{V} \rightarrow \tilde{U}$. The map $g: \tilde{V} \rightarrow \tilde{U}$ may not be an isomorphism. Still, with a strong log resolution $f: Y \rightarrow X$ that is functorial with smooth morphisms, there is an isomorphism $\bR g_{\ast}\cO_{\tilde{V}} \simeq g_{\ast}\cO_{\tilde{V}} \simeq \cO_{\tilde{U}}$. Using this isomorphism, we can still use the proof Proposition \ref{prop1.9} and Corollary \ref{cor1.10}. In the case when $X$ is Cohen-Macaulay, we obtain
$$R^{i}f_{\ast}\cO_{Y} = 0 \quad \text{for $1 \leq i \leq n - n_{\tilde{S}} -2$}.$$
This is known even in the non-projective case. One can see \cite{kovacs} for a proof. 
\end{rmk}

\subsection{The Graded Pieces of the de Rham Complex and the Extension Problem}

\begin{lemma}
    Let $Z$ be a smooth variety and $T \subseteq Z$ a closed subvariety of dimension $n_{T}.$ If $\cM \in MHM(Z)$ whose support is contained in $T,$ then 
    $$\cH^{i}(Gr^{F}_{p}DR(\cM)) = 0 \quad \text{for $i < -n_{T}$}.$$
\end{lemma}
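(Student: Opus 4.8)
The plan is to reduce to the case of a pure Hodge module with strict support, and then to transport the statement from a resolution of that support, where it is visible directly from the definition of the graded de Rham complex. Since every morphism in $MHM(Z)$ is strict with respect to the Hodge filtration, a short exact sequence $0 \to \cM' \to \cM \to \cM'' \to 0$ in $MHM(Z)$ induces, after applying $Gr^{F}_{p}DR$, a short exact sequence of complexes in $D^{b}_{coh}(\cO_{Z})$ — each term of $Gr^{F}_{p}DR(-)$ is built from the associated graded of the underlying $\cD_{Z}$-module, an exact functor on $F$-strict sequences, by tensoring with a locally free sheaf — hence a long exact sequence of cohomology sheaves. Every subquotient of $\cM$ is again supported on $T$, so applying this to the weight filtration $W_{\bullet}\cM$ reduces the lemma to the case $\cM \in HM(Z,w)$ pure. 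A pure Hodge module is a finite direct sum of Hodge modules with strict support an irreducible closed subvariety, each such support being contained in $T$ and hence of dimension $\le n_{T}$; since $Gr^{F}_{p}DR$ is additive, it suffices to prove that if $\cM$ has strict support an irreducible closed $W \subseteq Z$ with $\dim W = d$, then $\cH^{i}(Gr^{F}_{p}DR(\cM)) = 0$ for $i < -d$ (which, as $d \le n_{T}$, is stronger than what is claimed).

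To handle this case I would resolve $W$. Choose, by Hironaka \cite{hironaka}, a resolution $\nu: \tilde{W}\to W$ that is an isomorphism over a dense open $U \subseteq W$ on which $\cM$ restricts to a polarizable variation of Hodge structure $\cV$; let $\tilde{\cM}\in HM(\tilde{W},w)$ be the polarizable Hodge module with strict support $\tilde{W}$ extending $\nu^{*}\cV$, and let $g: \tilde{W}\to Z$ be the (proper) composite of $\nu$ with the inclusion $W \hookrightarrow Z$. By Saito's decomposition theorem \cite{saito}, $g_{+}\tilde{\cM} \simeq \bigoplus_{k}\cH^{k}(g_{+}\tilde{\cM})[-k]$ in $D^{b}MHM(Z)$, where $\cH^{0}(g_{+}\tilde{\cM})$ is pure and splits according to strict support. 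Restricting to $U$, over which $\nu$ is an isomorphism, identifies the strict-support-$W$ summand of $\cH^{0}(g_{+}\tilde{\cM})$ with the intermediate extension of $\cV$, i.e. with $\cM$; hence $\cM$ is a direct summand of $\cH^{0}(g_{+}\tilde{\cM})$ in $MHM(Z)$.

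Finally, using additivity of $Gr^{F}_{p}DR$ together with the compatibility $\bR g_{\ast}\circ Gr^{F}_{p}DR \simeq Gr^{F}_{p}DR\circ g_{+}$ recalled above \cite[Prop. 4.10]{ks}, the complex $Gr^{F}_{p}DR(\cH^{0}(g_{+}\tilde{\cM}))$ is a direct summand of $Gr^{F}_{p}DR(g_{+}\tilde{\cM}) \simeq \bR g_{\ast}Gr^{F}_{p}DR_{\tilde{W}}(\tilde{\cM})$ in $D^{b}_{coh}(\cO_{Z})$, and therefore so is $Gr^{F}_{p}DR(\cM)$. Since $\tilde{W}$ is smooth of dimension $d$, the complex $Gr^{F}_{p}DR_{\tilde{W}}(\tilde{\cM})$ is concentrated in cohomological degrees $[-d,0]$ by its very definition, and $\bR g_{\ast}$, being left exact on sheaves, cannot lower the lowest nonvanishing cohomological degree; hence $\cH^{i}(\bR g_{\ast}Gr^{F}_{p}DR_{\tilde{W}}(\tilde{\cM})) = 0$ for $i<-d$, and the same vanishing passes to its direct summand $\cH^{i}(Gr^{F}_{p}DR(\cM))$. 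As $d \le n_{T}$, this gives the assertion.

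The main obstacle is the middle step: extracting $\cM$ as the strict-support summand of $\cH^{0}(g_{+}\tilde{\cM})$ via the decomposition theorem, and the bookkeeping that a summand of $\cH^{0}(g_{+}\tilde{\cM})$ yields a summand of the entire complex $Gr^{F}_{p}DR(g_{+}\tilde{\cM})$ — which is precisely where one needs the splitting to hold in $D^{b}MHM(Z)$ rather than merely after applying $DR$. An alternative would be an induction on $\dim W$ using the triangle $i_{W'+}i^{!}_{W'}\cM \to \cM \to \bR j_{\ast}(\cM|_{Z\backslash W'})$ with $W' = W\backslash U$, but this forces one to compute $Gr^{F}_{p}DR$ of the meromorphic extension along the boundary as in \cite[Prop. 3.11]{saito2}, which is considerably more delicate, so the route through a resolution is preferable.
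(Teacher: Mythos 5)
Your proposal is correct and follows essentially the same route as the paper: reduce via the weight filtration and the strict support decomposition to a pure Hodge module with irreducible strict support, resolve the support, realize $\cM$ as a direct summand of $\cH^{0}$ of the pushforward of the extended Hodge module on the resolution, and then combine Saito's decomposition theorem with the compatibility $\bR g_{\ast}\circ Gr^{F}_{p}DR \simeq Gr^{F}_{p}DR\circ g_{+}$ and the obvious degree bound on the smooth resolution. The only differences are expository (the paper phrases the reductions as inductions on weight and on $\dim T$, and leaves the strictness and summand bookkeeping implicit).
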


\begin{proof}
    By induction on the weight, we may assume $\cM$ is a pure Hodge module of weight $w.$ By induction on dimension of $T,$ using the decomposition of strict support, we may assume $T$ is irreductible and $\cM \in HM_{T}(Z,w).$ Let $\pi: \tilde{T} \rightarrow T$ be a resolution of singularities, and let $\Pi: \tilde{T} \rightarrow Z$ be the induced map. There exists a unique Hodge module $\tcM \in HM_{\tilde{T}}(\tilde{T},w)$ such that $\cM$ is a direct summand of $\cH^{0}(\Pi_{+}\tcM).$  For any $p \in \Z,$ we have
    $$\cH^{i}(Gr^{F}_{p}DR(\Pi_{+}\tcM)) \cong R^{i}\Pi_{*}Gr^{F}_{p}DR(\tcM).$$
    The variety $\tilde{T}$ is smooth and, by the definition of $Gr^{F}_{p}DR(\tcM),$ we have
    $\cH^{i}(Gr^{F}_{p}DR(\tcM)) = 0$ whenever $i < -n_{T}.$ Therefore, we obtain
    $$\cH^{i}(Gr^{F}_{p}DR(\Pi_{+}\tcM)) \cong R^{i}\Pi_{*}Gr^{F}_{p}DR(\tcM) = 0 \quad \text{ for $i < -n_{T}$}.$$
    The lemma now follows because $\cM$ is a direct summand of $\cH^{0}(\Pi_{+}\tcM)$ and Saito's decomposition theorem \cite[Thm. 5.3.1]{saito}, 
    $$\Pi_{+}\tcM \simeq \bigoplus_{i \in \Z} \cH^{i}(\Pi_{+}\tcM)[-i].$$
\end{proof}

\begin{thm}
    Let $X$ be any irreducible variety (not necessarily projective) and $U \subseteq X$ any open subset. If $\cM \in HM_{U}(U,w)$  and $\cM_{X} \in HM_{X}(X,w)$ is the unique extension, then the natural map
    $$\cH^{i}(Gr^{F}_{p}DR(\cM_{X})) \rightarrow \cH^{i}(Gr^{F}_{p}DR(j_{+}\cM))$$
    is an isomorphism for $i< -n_{S}$ and injective for $i = -n_{S}.$
\end{thm}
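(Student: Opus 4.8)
The plan is to run the recollement (attachment) triangle along the closed subvariety $S$ and reduce the statement to a vanishing statement for the graded de Rham complex of the local cohomology object $i_{S+}i_{S}^{!}\cM_{X}$, which is then settled by combining the preceding Lemma with the cosupport property of the intermediate extension. When $X$ is singular, fix an embedding $i_{X}\colon X\hookrightarrow\tX$ into a smooth variety and read every $Gr^{F}_{p}DR$ through $i_{X+}$, using the recalled fact that the result lands in $D^{b}_{coh}(\cO_{X})$ independently of the embedding. Since $\cM_{X}$ extends $\cM$ we have $j^{*}\cM_{X}\simeq\cM$, so the adjunction triangle in $D^{b}MHM(X)$ is
$$\begin{tikzcd} i_{S+}i_{S}^{!}\cM_{X} \arrow[r] & \cM_{X} \arrow[r] & j_{+}\cM \arrow[r, "+1"] & {} \end{tikzcd}$$
with second arrow inducing the map of the theorem. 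Applying the functor $Gr^{F}_{p}DR(\bullet)$ produces a distinguished triangle in $D^{b}_{coh}(\cO_{X})$, whose long exact sequence of cohomology sheaves shows it suffices to prove $\cH^{m}(Gr^{F}_{p}DR(i_{S+}i_{S}^{!}\cM_{X}))=0$ for all $m\le -n_{S}$: vanishing in degree $-n_{S}$ gives injectivity there, while vanishing in degrees $m$ and $m+1$ gives an isomorphism in degree $m$.

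To prove that vanishing I would use two facts about $i_{S}^{!}\cM_{X}$. First, $\cM_{X}$ is the minimal (intermediate) extension of $\cM$ across $S$, so the cosupport condition gives $i_{S}^{!}\cM_{X}\in{}^{p}D^{\ge 1}$; equivalently $\cH^{t}(i_{S}^{!}\cM_{X})=0$ in $MHM(S)$ for $t\le 0$. Second, for $t\ge 1$ the mixed Hodge module $\cH^{t}(i_{S}^{!}\cM_{X})$ has support contained in $S$, of dimension $n_{S}$, so by the preceding Lemma $\cH^{s}(Gr^{F}_{p}DR(i_{S+}\cH^{t}(i_{S}^{!}\cM_{X})))=0$ for $s<-n_{S}$. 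Plugging the cohomology objects of $i_{S}^{!}\cM_{X}$ into the hypercohomology spectral sequence
$$E_{2}^{s,t}=\cH^{s}\big(Gr^{F}_{p}DR(i_{S+}\cH^{t}(i_{S}^{!}\cM_{X}))\big)\ \Longrightarrow\ \cH^{s+t}\big(Gr^{F}_{p}DR(i_{S+}i_{S}^{!}\cM_{X})\big)$$
(or, equivalently, inducting along the canonical truncation triangles of $i_{S}^{!}\cM_{X}$), the nonzero terms occur only for $t\ge 1$ and $s\ge -n_{S}$, hence only for $s+t\ge -n_{S}+1$. This yields the required vanishing and finishes the argument.

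The main obstacle is conceptual rather than computational: one must use \emph{both} bounds — the dimension bound from the Lemma and the perversity bound coming from $\cM_{X}$ being the intermediate extension — because neither alone pushes the range down to $i\le -n_{S}$. The technical points requiring care are that $Gr^{F}_{p}DR(\bullet)$ is a triangulated functor on $D^{b}MHM(X)$, so that the triangle survives after applying it; that $i_{S}^{!}$ of the intermediate extension sits in perverse degrees $\ge 1$ even when $S$ is singular and of arbitrary codimension; and, when $X$ itself is singular, that the bookkeeping is carried out on a smooth ambient $\tX$ using the embedding-independence of $Gr^{F}_{p}DR$ recalled above.
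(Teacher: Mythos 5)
Your proposal is correct and follows essentially the same route as the paper: the adjunction triangle $i_{S+}i_{S}^{!}\cM_{X} \to \cM_{X} \to j_{+}\cM \to +1$, the cosupport fact that $\cH^{t}(i_{S}^{!}\cM_{X})=0$ for $t\le 0$ since $\cM_{X}$ has strict support $X$, and the preceding lemma applied to the cohomology modules supported on $S$ to get $\cH^{m}(Gr^{F}_{p}DR(i_{S+}i_{S}^{!}\cM_{X}))=0$ for $m\le -n_{S}$, then the long exact sequence. The only cosmetic difference is that the paper first remarks the problem is local and reduces to $X$ quasi-projective (so that an embedding into a smooth ambient variety is available), whereas you fix such an embedding directly; otherwise the arguments coincide.
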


\begin{proof}
    The problem is local, so we may assume $X$ is quasi-projective. We have the natural triangle
    $$ \begin{tikzcd}
        i_{S+}i^{!}_{S}\cM_{X} \arrow[r] & \cM_{X} \arrow[r] & j_{+}\cM \arrow[r, "+1"] & \hfill. \end{tikzcd} $$
        Note that $\cH^{j}( i_{S+}i^{!}_{S}\cM_{X}) = 0$ for $j < 1.$ Therefore, by the previous lemma, whenever $i \leq -n_{S}$ we must have $\cH^{i}(Gr^{F}_{p}DR( i_{S+}i^{!}_{S}\cM_{X})) = 0.$ The theorem now follows because of the exact triangle above.
\end{proof}

\begin{cor}\label{splitThm}
    Let $X$ be any irreducible variety (not necessarily projective) and assume $U \subseteq X$ is a nonsingular open subset. Let $IC^{H}_{X} \in HM_{X}(X,n)$ denote the unique extension of the trivial Hodge module $\Q^{H}_{U}[n] \in HM_{U}(U,n).$ If $f:Y \rightarrow X$ is a strong log resolution of $S$, then the natural map
    $$\cH^{i+ p -n}(Gr^{F}_{-p}DR(IC^{H}_{X})) \rightarrow R^{i}f_{*}\Omega_{Y}^{p}(\log E)$$
    is an isomoprhism for $i+p<n-n_{S}$ and injective for $i+p= n -n_{S}.$ Furthermore, if $g: Y' \rightarrow X$ is any resolution of singularities, then for $p +i \leq n - n_{S} -1$ the map
    $$ R^{i}g_{*}\Omega^{p}_{Y'} \rightarrow R^{i}f_{*}\Omega^{p}_{Y}(\log E)$$
    is surjective. 
\end{cor}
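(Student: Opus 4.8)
The plan is to combine the previous theorem (the isomorphism $\cH^{i}(Gr^{F}_{p}DR(IC^{H}_{X})) \to \cH^{i}(Gr^{F}_{p}DR(j_{+}\Q^{H}_{U}[n]))$ in the stated range) with the identification of $Gr^{F}_{-p}DR$ of the $*$-extension along a strong log resolution, and then feed in the surjectivity from Proposition \ref{cor3.6}. First I would unwind the trivial-Hodge-module case: for $\cM = \Q^{H}_{U}[n]$ one has $Gr^{F}_{-p}DR(\cM) = \Omega^{p}_{U}[n-p]$, so $j_{+}\cM = j_{+}\Q^{H}_{U}[n]$ has $\cH^{i+p-n}(Gr^{F}_{-p}DR(j_{+}\cM)) \cong R^{i}j_{*}\Omega^{p}_{U}$. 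Applying the preceding theorem with this $\cM$ gives that $\cH^{i+p-n}(Gr^{F}_{-p}DR(IC^{H}_{X})) \to R^{i}j_{*}\Omega^{p}_{U}$ is an isomorphism for $i+p-n < -n_{S}$ and injective for $i+p-n=-n_{S}$, i.e.\ for $i+p < n-n_{S}$ and $i+p = n-n_{S}$ respectively.

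Next I would replace $R^{i}j_{*}\Omega^{p}_{U}$ by $R^{i}f_{*}\Omega^{p}_{Y}(\log E)$: by Corollary \ref{cor4.2}, for a strong log resolution $f:Y\to X$ the natural map $R^{i}f_{*}\Omega^{p}_{Y}(\log E) \to R^{i}j_{*}\Omega^{p}_{U}$ is an isomorphism for $p+i \leq n-n_{S}-2$ and injective for $p+i = n-n_{S}-1$. Since the range in the corollary statement is $p+i < n-n_{S}$ (isomorphism) and $p+i = n-n_{S}$ (injectivity), I need to be a little careful at the boundary: for $p+i \le n-n_S-2$ both maps are isomorphisms so the composite statement is clean; at $p+i = n-n_S-1$ Corollary \ref{cor4.2} gives only injectivity of $R^if_*\Omega^p_Y(\log E)\to R^ij_*\Omega^p_U$ while the previous theorem gives an isomorphism $\cH^{i+p-n}(Gr^F_{-p}DR(IC^H_X)) \to R^ij_*\Omega^p_U$, so composing with a section of the injection — or rather chasing the two triangles simultaneously — shows the map to $R^if_*\Omega^p_Y(\log E)$ is still an isomorphism there; at $p+i = n-n_S$ one gets injectivity by the same diagram chase. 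This bookkeeping at the two boundary degrees is where I expect to have to be most careful, though it is not deep.

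For the final assertion, let $g:Y'\to X$ be an arbitrary resolution of singularities. The complex $\bR g_{*}\cO_{Y'}$ — and more generally the relevant $\bR g_*\Omega^p_{Y'}$ after passing to a common dominating resolution — does not depend on the chosen resolution in the appropriate derived sense, so I may assume $Y'$ dominates $Y$, say via $h:Y'\to Y$ with $f\circ h = g$. Then I would invoke Proposition \ref{cor3.6} applied to $\cM = \Q^H_U[n]$, whose conclusion (after adjusting the shift, exactly as in the Corollary following Proposition \ref{cor3.6}) is that $H^i(Y,\Omega^p_Y)\to H^i(U,\Omega^p_U)$ is surjective for $i+p\le n-n_S-2$; the sheaf-level and local version, using $\cM_Y = \Q^H_Y[n]$ and the fact that $f$ is a strong log resolution, gives surjectivity of $R^ig_*\Omega^p_{Y'} \to R^ij_*\Omega^p_U$ for $p+i \le n-n_S-2$. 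Then factoring this surjection through the isomorphism $R^if_*\Omega^p_Y(\log E) \xrightarrow{\sim} R^ij_*\Omega^p_U$ (valid for $p+i\le n-n_S-2$ by Corollary \ref{cor4.2}) yields the surjectivity of $R^ig_*\Omega^p_{Y'} \to R^if_*\Omega^p_Y(\log E)$. The main obstacle is making the "independence of resolution" and the comparison maps between $g$, $f$ and $j$ genuinely compatible as maps (not just abstract isomorphisms) so that the factorization is legitimate; once the maps are pinned down canonically via the universal property of $IC^H_X$ and the adjunction $j^*j_+ = \mathrm{id}$, the rest is formal.
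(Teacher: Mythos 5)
There is a genuine gap, and it occurs right at the start. Your key identification $\cH^{i+p-n}(Gr^{F}_{-p}DR(j_{+}\Q^{H}_{U}[n]))\cong R^{i}j_{*}\Omega^{p}_{U}$ is false: $Gr^{F}_{p}DR(\bullet)$ commutes with direct images only for \emph{proper} maps, and $j$ is an open immersion. By Saito's computation of the Hodge filtration on the open direct image (\cite[Prop.\ 3.11]{saito2}, applied via $j_{+}\Q^{H}_{U}[n]\simeq f_{+}\rho_{+}\Q^{H}_{V}[n]$ for the strong log resolution $f$), the correct statement is $Gr^{F}_{-p}DR(j_{+}\Q^{H}_{U}[n])\simeq \bR f_{*}\Omega^{p}_{Y}(\log E)[n-p]$, whose cohomology is $R^{i}f_{*}\Omega^{p}_{Y}(\log E)$ — not $R^{i}j_{*}\Omega^{p}_{U}$ (compare $j_{*}\cO_{U}=\cO_{X}(*D)$ in the snc case, which is not even coherent). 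This is exactly why the first assertion of the corollary follows immediately from the preceding theorem with no boundary analysis, and also why Corollary \ref{IC cor 2} (the map to $R^{i}j_{*}\Omega^{p}_{U}$) only holds in the smaller range $p+i\leq n-n_{S}-2$: your version would "prove" more than the paper claims there. Your subsequent detour through Corollary \ref{cor4.2} both rests on this wrong identification and imports a projectivity hypothesis (Section 4 assumes $X$ projective), whereas the present statement is for arbitrary irreducible $X$ and is local.

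The second assertion also cannot be obtained the way you propose. Proposition \ref{cor3.6} is a statement about global cohomology on a projective $X$, proved via weights; the "sheaf-level and local version" you invoke is not established in the paper and is essentially the content of Corollary \ref{IC cor 2}, which is deduced \emph{from} the present corollary — so the argument is circular, it again needs projectivity, and even granted it you would only reach $p+i\leq n-n_{S}-2$ rather than the asserted $p+i\leq n-n_{S}-1$. Moreover, $\bR g_{*}\Omega^{p}_{Y'}$ is \emph{not} independent of the resolution (already for the blow-up of a point on a smooth surface $R^{1}g_{*}\Omega^{1}_{Y'}\neq 0$), and reducing to a resolution dominating $Y$ gives surjectivity statements pointing in the wrong direction. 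The paper's route is different and purely local: by the decomposition theorem, $IC^{H}_{X}$ is a direct summand of $g_{+}\Q^{H}_{Y'}[n]$, so $\bR g_{*}\Omega^{p}_{Y'}[n-p]\simeq Gr^{F}_{-p}DR(IC^{H}_{X})\oplus M$ in $D^{b}_{coh}(\cO_{X})$; hence $R^{i}g_{*}\Omega^{p}_{Y'}\rightarrow \cH^{i+p-n}(Gr^{F}_{-p}DR(IC^{H}_{X}))$ is surjective for all $i,p$, and composing with the isomorphism from the first part (valid for $i+p\leq n-n_{S}-1$) gives the claim for any resolution $g$, with no projectivity and no comparison of resolutions needed.
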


\begin{proof}
   By the previous theorem, the natural map
   $$\cH^{i}(Gr^{F}_{-p}DR(IC^{H}_{X})) \rightarrow \cH^{i}(Gr^{F}_{-p}DR(j_{+}\Q^{H}_{U}[n]))$$
    is an isomorphism for $i< -n_{S}$ and injective for $i = -n_{S}.$ But, for $0 \leq p \leq n,$ the complex $Gr^{F}_{-p}DR(j_{+}\Q^{H}_{U}[n])$ is quasi-isomorphic to $\bR f_{*} \Omega^{p}_{Y}(\log E)[n-p].$ To finish the proof, the Hodge module $IC^{H}_{X}$ is a direct summand of $g_{+}\Q^{H}_{Y'}[n].$ Hence, we have a decomposition
    $$\bR g_{*}\Omega^{p}_{Y'}[n-p] \simeq Gr^{F}_{-p}DR(IC^{H}_{X}) \oplus M,$$
    with $M \in D^{b}_{coh}(\cO_{X})$. Hence the natural map $R^{i}g_{*}\Omega^{p}_{Y'} \rightarrow \cH^{i +p -n}(Gr^{F}_{-p}DR(IC^{H}_{X}))$ is surjective, and the corollary holds. 
   
\end{proof}

\begin{cor}\label{IC cor 2}
    Let $X$ be an irreducible projective variety and assume $U \subseteq X$ is a nonsingular open subset. Let $IC^{H}_{X} \in HM_{X}(X,n)$ denote the unique extension of the trivial Hodge module $\Q^{H}_{U}[n] \in HM_{U}(U,n).$ Then the natural map
    $$\cH^{i +p -n}(Gr^{F}_{-p}DR(IC^{H}_{X})) \rightarrow R^{i}j_{*} \Omega^{p}_{U}$$
    is an isomorphism for $p +i \leq n -n_{S} -2$ and injective for $p +i = n -n_{S} -1.$ Furthermore, if $g: Y' \rightarrow X$ is a resolution of singularities, then the natural map
    $$ R^{i}g_{*}\Omega^{p}_{Y'} \rightarrow R^{i}j_{*}\Omega^{p}_{U}$$
    is surjective for $p +i \leq n - n_{S} -2.$
\end{cor}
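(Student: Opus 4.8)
The plan is to obtain the statement by chaining two comparison maps already established in the excerpt: the local one of Corollary \ref{splitThm}, which relates $Gr^{F}_{-p}DR(IC^{H}_{X})$ to $\bR f_{\ast}\Omega^{p}_{Y}(\log E)[n-p]$ for a strong log resolution $f\colon Y\to X$ of $S$, and the one of Corollary \ref{cor4.2}, which relates $\bR f_{\ast}\Omega^{p}_{Y}(\log E)$ to $\bR j_{\ast}\Omega^{p}_{U}$. First I would fix a strong log resolution $f\colon Y\to X$ of $S$ with $E=f^{-1}(S)_{red}$ a simple normal crossing divisor; such an $f$ exists because $X$ is irreducible and $U=X\setminus S$ is smooth, so $S$ contains the singular locus of $X$ and does not contain the (unique) component $X$ — exactly the standing hypotheses of Setting \ref{set1} and the hypotheses of Corollary \ref{cor4.2}. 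Since $X$ is irreducible it is equidimensional and projective, so Corollary \ref{cor4.2} applies directly, and Corollary \ref{splitThm} applies as well.

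Next I would record that the natural map of the statement factors as
$$\cH^{i+p-n}(Gr^{F}_{-p}DR(IC^{H}_{X})) \longrightarrow R^{i}f_{\ast}\Omega^{p}_{Y}(\log E) \longrightarrow R^{i}j_{\ast}\Omega^{p}_{U},$$
where the first arrow is the one of Corollary \ref{splitThm} (using the quasi-isomorphism $Gr^{F}_{-p}DR(j_{+}\Q^{H}_{U}[n])\simeq\bR f_{\ast}\Omega^{p}_{Y}(\log E)[n-p]$ recalled in its proof to identify $\cH^{i+p-n}(Gr^{F}_{-p}DR(j_{+}\Q^{H}_{U}[n]))$ with $R^{i}f_{\ast}\Omega^{p}_{Y}(\log E)$) and the second is the one of Corollary \ref{cor4.2}. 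That this composite equals the natural map is checked by restricting everything to $U$, where $Gr^{F}_{-p}DR(IC^{H}_{X})$, $\bR f_{\ast}\Omega^{p}_{Y}(\log E)[n-p]$ and $\bR j_{\ast}\Omega^{p}_{U}[n-p]$ all become $\Omega^{p}_{U}[n-p]$ and both arrows restrict to the identity, so each of the two maps under comparison is the one induced by the adjunction $Gr^{F}_{-p}DR(IC^{H}_{X})\to\bR j_{\ast}(Gr^{F}_{-p}DR(IC^{H}_{X})|_{U})$. By Corollary \ref{splitThm} the first arrow is an isomorphism for $i+p<n-n_{S}$ and injective for $i+p=n-n_{S}$; by Corollary \ref{cor4.2} the second is an isomorphism for $i+p\le n-n_{S}-2$ and injective for $i+p=n-n_{S}-1$. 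Hence for $i+p\le n-n_{S}-2$ the composite is an isomorphism, and for $i+p=n-n_{S}-1$ the first arrow is still an isomorphism while the second is injective, so the composite is injective; this is the first assertion. (For $p<0$ or $p>n$ there is nothing to check: then $\Omega^{p}_{Y}(\log E)=0=\Omega^{p}_{U}$, and Corollary \ref{splitThm} already identifies the left-hand side with $R^{i}f_{\ast}\Omega^{p}_{Y}(\log E)=0$.)

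For the surjectivity statement I would take any resolution $g\colon Y'\to X$. By the last part of Corollary \ref{splitThm}, $R^{i}g_{\ast}\Omega^{p}_{Y'}\to R^{i}f_{\ast}\Omega^{p}_{Y}(\log E)$ is surjective for $i+p\le n-n_{S}-1$; composing with the isomorphism $R^{i}f_{\ast}\Omega^{p}_{Y}(\log E)\xrightarrow{\ \sim\ }R^{i}j_{\ast}\Omega^{p}_{U}$ of Corollary \ref{cor4.2}, which is valid for $i+p\le n-n_{S}-2$, gives the asserted surjection $R^{i}g_{\ast}\Omega^{p}_{Y'}\to R^{i}j_{\ast}\Omega^{p}_{U}$ in that range.

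The substance of the argument is entirely contained in Corollaries \ref{splitThm} and \ref{cor4.2}; the only genuine work left is the bookkeeping that the map named in the statement agrees with the above composite, which is where I would expect to have to be careful, though it amounts only to tracking the various complexes over $U$ and invoking functoriality of the adjunction maps.
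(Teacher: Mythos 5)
Your argument is correct and is exactly the derivation the paper intends: Corollary \ref{IC cor 2} is stated without proof as an immediate consequence of composing the comparison of Corollary \ref{splitThm} (between $\cH^{i+p-n}(Gr^{F}_{-p}DR(IC^{H}_{X}))$, resp. $R^{i}g_{*}\Omega^{p}_{Y'}$, and $R^{i}f_{*}\Omega^{p}_{Y}(\log E)$) with that of Corollary \ref{cor4.2} (between $R^{i}f_{*}\Omega^{p}_{Y}(\log E)$ and $R^{i}j_{*}\Omega^{p}_{U}$), with the numerics combining exactly as you state. Your extra care in checking that the composite agrees with the natural map by restriction to $U$ is sound bookkeeping that the paper leaves implicit.
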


 From the previous result, we can prove a particular case of Flenner's theorem \cite{flenner}.
 
 \begin{cor}\cite{flenner}\label{cor4.8}
 Let $X$ be an irreducile projective variety and $U \subseteq X$ the nonsingular set of $X.$ If $g: Y' \rightarrow X$ is any resolution of singularities, then the natural map
 $$
 g_{*}\Omega^{p}_{Y'} \rightarrow j_{*}\Omega^{p}_{U}
 $$
 is an isomorphism whenever $p \leq n -n_{S} -2$.
 \end{cor}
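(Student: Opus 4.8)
The plan is to deduce the statement directly from Corollary \ref{IC cor 2}, which already supplies half of what is needed: for any resolution $g\colon Y'\to X$ the natural map $g_*\Omega^p_{Y'}\to j_*\Omega^p_U$ (the $i=0$ instance of the second assertion there) is surjective as soon as $p\le n-n_S-2$. So the only thing left to prove is that this map is \emph{injective} in the same range; combined with surjectivity this gives the isomorphism, which is exactly Flenner's statement (here $S=X_{\mathrm{sing}}$, so $n-n_S=\operatorname{codim}(X_{\mathrm{sing}},X)$ and the bound reads $p\le\operatorname{codim}(X_{\mathrm{sing}},X)-2$).

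For injectivity I would argue as follows. Since $Y'$ is irreducible and smooth, $\Omega^p_{Y'}$ is locally free, hence $g_*\Omega^p_{Y'}$ is a torsion-free $\cO_X$-module on the integral scheme $X$ (using that $g$ is dominant, so $g^\ast$ is injective on functions, so a section of $g_*\Omega^p_{Y'}$ killed by a nonzero function vanishes). Next one identifies the restriction to $U$: the morphism $g$ restricts to a proper birational morphism $g_U\colon g^{-1}(U)\to U$ between smooth varieties, and for such a morphism $(g_U)_*\Omega^p_{g^{-1}(U)}=\Omega^p_U$ — a standard fact, reducible for instance to the case of a blow-up of a smooth centre via weak factorization, or seen from reflexivity of $\Omega^p_U$. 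Hence $j^\ast\!\big(g_*\Omega^p_{Y'}\big)=\Omega^p_U$, and the unit of adjunction $g_*\Omega^p_{Y'}\to j_*j^\ast\!\big(g_*\Omega^p_{Y'}\big)=j_*\Omega^p_U$ has kernel supported on the proper closed subset $S$; being a torsion subsheaf of a torsion-free sheaf, this kernel vanishes. Finally, this adjunction map coincides with the natural map appearing in Corollary \ref{IC cor 2}: by adjunction $\mathrm{Hom}_X(-,j_*\Omega^p_U)=\mathrm{Hom}_U(j^\ast(-),\Omega^p_U)$, so any $\cO_X$-linear map into $j_*\Omega^p_U$ is determined by its restriction to $U$, and both maps restrict to the identity of $\Omega^p_U$.

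Putting these together, the natural map $g_*\Omega^p_{Y'}\to j_*\Omega^p_U$ is injective by the above and surjective by Corollary \ref{IC cor 2} whenever $p\le n-n_S-2$, hence an isomorphism.

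There is no real obstacle once Corollary \ref{IC cor 2} is available; the only points needing a little care are the identification $j^\ast(g_*\Omega^p_{Y'})=\Omega^p_U$ for an \emph{arbitrary} resolution $g$ (which need not be an isomorphism over $U$) and the compatibility of the adjunction map with the specific ``natural map'' of Corollary \ref{IC cor 2}. If one prefers to avoid the first point, one can first reduce to the case where $g$ is a strong log resolution of $S=X_{\mathrm{sing}}$ — legitimate because $g_*\Omega^p_{Y'}$ is independent of the chosen resolution, by a common-resolution argument together with $h_*\Omega^p_{Y''}=\Omega^p_{Y'}$ for proper birational $h$ between smooth varieties — and then $g|_V\colon V\to U$ is an isomorphism, so $j^\ast(g_*\Omega^p_Y)=\Omega^p_U$ is immediate; in that case one even has $g_*\Omega^p_Y\subseteq g_*\Omega^p_Y(\log E)$ with $g_*\Omega^p_Y(\log E)\cong j_*\Omega^p_U$ in this range by Corollary \ref{cor4.2}, giving an alternative route to injectivity.
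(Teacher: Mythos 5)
Your proof is correct and follows the route the paper intends: the paper deduces this corollary from Corollary \ref{IC cor 2} without writing out the details, and that corollary is exactly what you use for surjectivity at $i=0$. The remaining half that you supply — injectivity via torsion-freeness of $g_{*}\Omega^{p}_{Y'}$ (the kernel being supported on a proper closed subset, since the map is an isomorphism over the locus where $g$ is), together with the adjunction argument identifying the natural map with the restriction map — is the standard completion of that step, so the argument matches the paper's approach.
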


The previous corollary can be applied to prove a particular case of the Zariski-Lipman conjecture. The Zariski-Lipman conjecture claims that if $Z$ is an algebraic variety and the tangent bundle $T_{Z}$ is locally free, then $Z$ is smooth. Lipman \cite{lipman} proved an algebraic variety $Z$ is normal if the tangent bundle $T_{Z}$ is locally free and proved the conjecture for a few cases. By Corollary \ref{cor4.8}, we can use the proof given in \cite[Sect. 6]{GKKP} to prove the following theorem. 

\begin{thm}\cite{flenner}\label{thm4.9}
Suppose $X$ is an irreducible projective variety and $S$ is the singular locus. If the tangent bundle $T_{X}$ is locally free and $n - n_{S} \geq 3$, then $X$ is smooth.
\end{thm}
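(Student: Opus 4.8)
The plan is to deduce the statement from the proof of the Lipman--Zariski conjecture for klt varieties in \cite[Sect.~6]{GKKP}, whose only nontrivial external input --- the extendability of reflexive one--forms over a resolution --- is furnished in the present situation by Corollary \ref{cor4.8}. So the proposal has three ingredients: reduce to the normal case; record the extension statement in the precise form that \cite{GKKP} uses; then quote their argument.

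First I would invoke Lipman's theorem \cite{lipman}: if $T_X$ is locally free, then $X$ is normal. Hence the singular locus $S$ has codimension $n - n_S \ge 3 \ge 2$, the reflexive cotangent sheaf $\Omega^{[1]}_X := (\Omega^1_X)^{\vee\vee} = (T_X)^\vee$ is locally free of rank $n$, and $j_*\Omega^1_U = \Omega^{[1]}_X$. Now take any resolution of singularities $g\colon Y' \to X$. Since $n - n_S \ge 3$, Corollary \ref{cor4.8} applies with $p = 1$ and gives an isomorphism
$$
g_*\Omega^1_{Y'} \;\xrightarrow{\ \sim\ }\; j_*\Omega^1_U \;=\; \Omega^{[1]}_X ,
$$
so by adjunction there is a morphism $\phi\colon g^*\Omega^{[1]}_X \to \Omega^1_{Y'}$ of locally free sheaves of rank $n$ which is an isomorphism over $g^{-1}(U)$; dually (using that $\Omega^{[1]}_X$ is locally free) the differential $dg\colon T_{Y'} \to g^*T_X$ is an isomorphism over $g^{-1}(U)$. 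This is exactly the hypothesis under which the argument of \cite[Sect.~6]{GKKP} runs: the klt assumption there is used only to produce this extension, via their extension theorem for reflexive differentials.

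Next I would run the argument of \cite[Sect.~6]{GKKP} with $g$ a strong log resolution of $S$ and reduced exceptional SNC divisor $E$. Taking $n$-th exterior powers in $\phi$ yields a nonzero morphism $g^*\det\Omega^{[1]}_X \to \det\Omega^1_{Y'} = \omega_{Y'}$, so $\omega_{Y'} \cong g^*\det\Omega^{[1]}_X \otimes \mathcal{O}_{Y'}(D)$ for an effective divisor $D$ supported on $E$; here $\det\Omega^{[1]}_X = (\det T_X)^\vee$ is genuinely Cartier, so this makes sense and already records that $X$ has canonical-type singularities with respect to $K_X := \det\Omega^{[1]}_X$. One then analyses $\phi$ (equivalently $dg$) along the components $E_i$ of $E$: since $\dim g(E_i) \le n_S \le n-3$, a general fibre $F$ of $g|_{E_i}$ is positive-dimensional, the locally free sheaf $g^*\Omega^{[1]}_X$ becomes trivial on $F$, and composing $\phi$ with the restriction of forms to $F$ produces global (log-)forms on $F$ that the geometry of an exceptional fibre forbids; feeding this back through the determinant computation contradicts $D$ being nonzero unless $E = \emptyset$. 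Hence $g$ is an isomorphism and $X \cong Y'$ is smooth.

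The substance is entirely in the last step, that is, in the geometric core of \cite[Sect.~6]{GKKP}: ruling out a nonempty exceptional divisor for a resolution of a normal variety whose reflexive cotangent sheaf is locally free and extends. To invoke that argument cleanly one must check that \cite[Sect.~6]{GKKP} uses the klt hypothesis \emph{only} through normality (Step~1) and through the extension of reflexive one-forms (Step~2, now supplied by Corollary \ref{cor4.8} under the numerical condition $n - n_S \ge 3$), and not, say, through rational singularities or the classification of klt surface germs; should a rational-singularities input nevertheless be needed, Corollary \ref{cor4.4} and the remark following it provide substitutes in the present setting. Granting this, Steps~1--2 supply precisely the required data and the theorem follows.
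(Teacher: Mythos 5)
Your proposal follows essentially the same route as the paper, which gives no written proof beyond asserting that Corollary \ref{cor4.8} (applied with $p=1$, valid since $n-n_S\ge 3$) supplies the extension of one-forms needed to run the Lipman--Zariski argument of \cite[Sect.~6]{GKKP}, with normality coming from Lipman's theorem. Your additional verification that the klt hypothesis in \cite{GKKP} enters only through normality and the extension statement is exactly the point the paper leaves implicit, so the proposal is correct and matches the intended argument.
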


From Theorem \ref{thm4.9}, we can deduce Steenbrink and van Straten's result of the Zariski-Lipman conjecture for isolated singularities. 

\begin{cor}\cite{svs}\label{cor4.10}
Assume $Z$ is an irreducible variety and the tangent bundle $T_{Z}$ is locally free. If $Z$ has isolated singularities and $\dim Z > 2$, then $Z$ is smooth.
\end{cor}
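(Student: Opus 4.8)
The plan is to deduce Corollary \ref{cor4.10} from the projective case, Theorem \ref{thm4.9}, by a localization-and-compactification argument; the point that makes it work is that an isolated singular point is an \emph{isolated} point of the singular locus, so one can resolve everything else without disturbing it. First I would localize. Since $Z$ is of finite type over $\C$, its singular locus $\operatorname{Sing}(Z)$ is a closed subset which is finite by hypothesis, and smoothness is a local (indeed open) condition, so it suffices to fix one point $z \in \operatorname{Sing}(Z)$ and show $Z$ is smooth at $z$. Passing to an affine open neighbourhood of $z$ that avoids the other singular points, I may assume $Z$ is affine and irreducible with $\operatorname{Sing}(Z) = \{z\}$ and $T_Z$ still locally free (the restriction of a locally free sheaf is locally free). (By Lipman's theorem \cite{lipman} the variety $Z$ is then normal, though this will not be needed.)

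Next I would compactify and partially resolve. Embed $Z$ as a dense open subvariety of an irreducible projective variety $\bar Z$, for instance its projective closure in some $\mathbb{P}^N$. Because $Z$ is open in $\bar Z$ and smooth away from $z$, one has $\operatorname{Sing}(\bar Z) \cap Z = \{z\}$, so $z$ is an isolated point of $\operatorname{Sing}(\bar Z)$; write $\operatorname{Sing}(\bar Z) = \{z\} \sqcup T$ with $T$ closed in $\bar Z$ and $z \notin T$. Applying resolution of singularities \cite{hironaka} to the quasi-projective variety $\bar Z \setminus \{z\}$ presents it as a finite composition of blow-ups along smooth centres $C_i$, each mapping into $T$. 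Since every $C_i$ is disjoint from a fixed neighbourhood of $z$, its closure in the corresponding partial blow-up of $\bar Z$ equals $C_i$ itself and remains disjoint from the preimage of $z$; performing these same blow-ups on $\bar Z$ therefore produces a projective birational morphism $\pi \colon X \to \bar Z$ (each step a blow-up of a projective variety along a closed subvariety, hence $X$ projective) which is an isomorphism over a neighbourhood of $z$, with $\pi^{-1}(z)$ a single point and $X \setminus \pi^{-1}(z)$ smooth.

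Then I would invoke the projective case. The variety $X$ is irreducible and projective; it is smooth away from $z$ and, near $z$, isomorphic to $Z$, so $\operatorname{Sing}(X) = \{z\}$ and $T_X$ is locally free on all of $X$ — on $X \setminus \{z\}$ because $X$ is smooth there, and near $z$ because $T_X \cong T_Z$ is locally free by hypothesis. With $S = \operatorname{Sing}(X) = \{z\}$ we have $n_S = 0$ and $n = \dim X = \dim Z > 2$, hence $n - n_S = n \geq 3$. By Theorem \ref{thm4.9}, $X$ is smooth; in particular it is smooth at $z$, and therefore so is $Z$. Applying this to each of the finitely many singular points of $Z$ gives $\operatorname{Sing}(Z) = \emptyset$, i.e. $Z$ is smooth.

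The main obstacle is the middle step: constructing a \emph{projective} modification of the compactification that removes every singularity coming from the boundary while leaving a neighbourhood of $z$ untouched, so that the locally free tangent bundle is preserved and Theorem \ref{thm4.9} genuinely applies. This is exactly where the isolated-singularity hypothesis enters — precisely because $z \notin \overline{T}$, the resolution along $T$ can be carried out over all of $\bar Z$ without ever blowing up anything near $z$. (If one wants Corollary \ref{cor4.10} for an isolated singularity carrying only an analytic structure, one first replaces the germ by an algebraic model with the same completed local ring; smoothness of $Z$ at $z$ and local freeness of $T_Z$ near $z$ depend only on $\widehat{\cO}_{Z,z}$.)
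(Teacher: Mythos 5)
Your argument is correct and follows essentially the same route as the paper: localize to a quasi-projective neighbourhood of the singular point(s), produce a projective compactification whose singular locus coincides with that of $Z$ (so $T_X$ stays locally free and $n-n_S=\dim Z\geq 3$), and invoke Theorem \ref{thm4.9}. Your explicit boundary-resolution construction simply fills in the detail the paper states without proof, namely that such a compactification exists.
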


\begin{proof}
The problem is local. So, we may assume $Z$ is quasi-projective. Since $Z$ has isolated singularities, we can embed $Z$ into a projective variety $X$ such that the singular locus of $X$ coincides with the singular locus of $Z$. Hence, $X$ is an irreducible projective variety such that the tangent bundle $T_{X}$ is locally free and $\dim X > 2.$ By Theorem \ref{thm4.9}, $X$ is smooth. Therefore, $Z$ is smooth as well.  
\end{proof}

\section{GAGA-Type Theorem for Quasi-Projective Varieties}\label{GAGA-type}
To finish this papper, let $X$ be an irreducible projective variety and $X^{an}$ the associated analytic space of $X$ with the natural map $\Psi: X^{an} \rightarrow X.$ In the settings of Theorem \ref{prop1.9}, recall in the proof we had $\cG:=f\vert_{V \ast}\bR \cH om_{V}(\cF\vert_{V}, \omega^{\bullet}_{V}).$ We showed the sheaf $R^{p}j_{*}\cG^{-l}$ was coherent for $p \leq n- n_{S} -2.$ By \cite[Thm. B]{Siu}, the natural map $(R^{p}j_{*}\cG^{-l})^{an} \rightarrow R^{p}j^{an}_{*}(\cG^{-l})^{an}$ is an isomorphism for $p \leq n - n_{S} -2.$ So consider the following diagram with the natural maps, 
$$\begin{tikzcd}   & (R^{p}j_{*}\sigma_{>-l}\cG)^{an} \arrow[r] \arrow[d] & (R^{p}j_{*}\cG)^{an} \arrow[r] \arrow[d] & (R^{p}j_{*}\cG^{-l}[l])^{an} \arrow[r] \arrow[d] & (R^{p+1}j_{*}\sigma_{>-l}\cG)^{an}  \arrow[d] & \hfill \\
 & R^{p}j^{an}_{*}\sigma_{>-l}\cG^{an} \arrow[r]  & R^{p}j^{an}_{*}\cG^{an} \arrow[r]  & R^{p}j^{an}_{*}(\cG^{-l}[l])^{an} \arrow[r]  & R^{p+1}j^{an}_{*}\sigma_{>-l}\cG^{an}   &  \end{tikzcd}$$
If $l \geq 1$, then by induction on $l,$ the natural map $(R^{p}j_{*}\sigma_{>-l}\cG)^{an}  \rightarrow R^{p}j^{an}_{*}\sigma_{>-l}\cG^{an}$ is an isomorphism for $p \leq -n_{S} -q.$ Since the natural map $(R^{p}j_{*}\cG^{-l}[l])^{an} \rightarrow R^{p}j^{an}_{*}(\cG^{-l})^{an}[l]$ is an isomorphism for $p \leq -n_{S} -q -1,$ the natural map $(R^{p} j_{*}\cG)^{an} \rightarrow R^{p} j^{an}_{*}\cG^{an}$ is an isomorphism for $p \leq -n_{S} - q -1.$

 Since the complex $\cF\vert_{V} \in D^{b}_{coh}(\cO_{V})$ is a complex of locally free sheaves, and because the functor $F \mapsto F^{an}$ is flat, we have the quasi-isomorphism
$$(f\vert_{V \ast}\bR \cH om_{V}(\cF\vert_{V}, \omega^{\bullet}_{V}))^{an} \simeq f^{an}\vert_{V^{an} \ast}\bR \cH om_{V^{an}}(\cF^{an}\vert_{V^{an}}, \omega^{\bullet}_{V^{an}}).$$
Combining these facts, we obtain the following proposition.
\begin{prop}\label{an}
    With the same assumptions as Theorem \ref{prop1.9}, the natural map
    $$R^{p}j_{\ast}\bR \cH om_{U}(f\vert_{V \ast}\cF\vert_{V}, \Omega^{n}_{U}[n])^{an} \rightarrow R^{p}j^{an}_{\ast}\bR \cH om_{V^{an}}(f^{an}\vert_{V^{an} \ast}\cF^{an}\vert_{V^{an}}, \Omega^{n}_{U^{an}}[n])$$
    %$$ R^{p} f^{an}_{\ast}\bR \cH om_{Y^{an}}(\cF^{an}, \omega^{\bullet}_{Y^{an}})\rightarrow R^{p}j^{an}_{\ast}\bR \cH om_{V^{an}}(f^{an}\vert_{V^{an} \ast}\cF^{an}\vert_{V^{an}}, \Omega^{n}_{U^{an}}[n])$$
    is an isomorphism whenever $p \leq -n_{S} -q -1$.
\end{prop}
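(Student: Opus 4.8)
The plan is to assemble the ingredients developed in the discussion immediately preceding the statement. The crux is the comparison isomorphism $(R^{p}j_{\ast}\cG)^{an} \simeq R^{p}j^{an}_{\ast}\cG^{an}$ for $p \leq -n_{S}-q-1$, where $\cG := f\vert_{V\ast}\bR\cH om_{V}(\cF\vert_{V},\omega^{\bullet}_{V})$ in the notation of the proof of Theorem \ref{prop1.9}; granting this, the proposition follows once the two $\bR\cH om$-complexes appearing in the statement are identified with $\cG$ and $\cG^{an}$.

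First I would carry out this identification. Since $U = X \setminus S$ is smooth and equidimensional of dimension $n$, we have $\Omega^{n}_{U}[n] \simeq \omega^{\bullet}_{U}$; because $f\vert_{V}\colon V \to U$ is an isomorphism with $f\vert_{V}^{\ast}\omega^{\bullet}_{U} \simeq \omega^{\bullet}_{V}$, this gives $\bR\cH om_{U}(f\vert_{V\ast}\cF\vert_{V}, \Omega^{n}_{U}[n]) \simeq f\vert_{V\ast}\bR\cH om_{V}(\cF\vert_{V}, \omega^{\bullet}_{V}) = \cG$, and the analogous computation over $U^{an}$ and $V^{an}$ identifies the target of the proposition's map with $R^{p}j^{an}_{\ast}\cG^{an}$, where $\cG^{an}$ is the analytification of $\cG$. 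Here I also use the quasi-isomorphism recorded before the statement: since $\cF\vert_{V}$ is a bounded complex of locally free $\cO_{V}$-modules and the analytification functor is flat, the $\cH om$-complex commutes with $(\,\cdot\,)^{an}$, so $\cG^{an} \simeq f^{an}\vert_{V^{an}\ast}\bR\cH om_{V^{an}}(\cF^{an}\vert_{V^{an}}, \omega^{\bullet}_{V^{an}})$. After these reductions the proposition becomes exactly the comparison isomorphism above.

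To prove that comparison I would run the truncation triangle $0 \to \sigma_{>-l}\cG \to \cG \to \cG^{-l}[l] \to 0$ together with the resulting ladder of long exact sequences relating $(R^{\bullet}j_{\ast}(\,\cdot\,))^{an}$ to $R^{\bullet}j^{an}_{\ast}((\,\cdot\,)^{an})$. The base case concerns the single coherent sheaf $\cG^{-l}$: Step 1 of the proof of Theorem \ref{prop1.9} shows $R^{p}j_{\ast}\cG^{-l}$ is coherent for $p \leq n - n_{S} - 2$, whence Siu's GAGA-type theorem \cite[Thm. B]{Siu} yields $(R^{p}j_{\ast}\cG^{-l})^{an} \simeq R^{p}j^{an}_{\ast}(\cG^{-l})^{an}$ in that range; after the shift $[l]$ this handles $(R^{p}j_{\ast}\cG^{-l}[l])^{an}$ for all $p \leq -n_{S}-q-1$, since the hypothesis $q \geq -n+l+1$ ensures $p + l \leq n - n_{S} - 2$. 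Induction on $l$ then propagates the isomorphism from $\sigma_{>-l}\cG$ and $\cG^{-l}[l]$ to $\cG$ via the five lemma, establishing the claim for $p \leq -n_{S}-q-1$.

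The main point to be careful about is the bookkeeping of degrees in the truncation induction: one must verify that the shifts never push the index $p$ outside the range where the coherence from Step 1 and Siu's comparison are available — this is precisely where the hypothesis $q \geq -n+l+1$ of Theorem \ref{prop1.9} enters — and that the natural transformation between the two functors ``push forward then analytify'' and ``analytify then push forward'' is compatible with the connecting maps of the triangle, so that the five lemma genuinely applies. Everything else in the argument is formal.
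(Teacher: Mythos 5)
Your proposal is correct and follows essentially the same route as the paper: identify both sides with $(R^{p}j_{\ast}\cG)^{an}$ and $R^{p}j^{an}_{\ast}\cG^{an}$ for $\cG = f\vert_{V\ast}\bR\cH om_{V}(\cF\vert_{V},\omega^{\bullet}_{V})$ (using that $f\vert_{V}$ is an isomorphism, $U$ is smooth, and analytification is flat), then run the stupid-truncation triangle $\sigma_{>-l}\cG \to \cG \to \cG^{-l}[l]$, with the coherence of $R^{p}j_{\ast}\cG^{-l}$ from Step 1 of Theorem \ref{prop1.9} plus Siu's Theorem B as the base case and induction on $l$ via the five lemma. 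Your degree bookkeeping ($q \geq -n+l+1$ forcing $p+l \leq n-n_{S}-2$) matches the paper's argument, so nothing further is needed.
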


Combining Proposition \ref{an},  Proposition \ref{prop4.1}, and using a local version of GAGA \cite{SGA1}[XII Thm. 4.2], we have the following result for mixed Hodge modules on the associated analytic space of $X.$

\begin{thm}\label{analyticThm.}
    Assume $f: Y \rightarrow X$ is a strong log resolution of $S$. If $\cM \in HM_{V}(V,w)$ is a variation of Hodge structure, then the natural maps
    $$ (R^{i}j_{\ast}f\vert_{V \ast}Gr^{F}_{-p }DR(\cM))^{an} \rightarrow R^{i}j^{an}_{\ast}f^{an}\vert_{V^{an} \ast}Gr^{F}_{-p }DR(\cM^{an})$$
    $$R^{i}f^{an}_{\ast}Gr^{F}_{-p}DR(\cM^{an}(\ast E)) \rightarrow R^{i}j^{an}_{\ast}f^{an}\vert_{V^{an} \ast}Gr^{F}_{-p }DR(\cM^{an})$$
    
are isomorphisms for $i \leq -n_{S} -2.$
\end{thm}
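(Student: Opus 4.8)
The plan is to deduce the first isomorphism directly from Proposition~\ref{an} and the second from Proposition~\ref{prop4.1} together with the local GAGA theorem, in both cases reusing the duality identifications already made in the proof of Proposition~\ref{prop4.1}.

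For the first map I would set $\cF := Gr^{F}_{p}DR(\D(\cM)(!E))$. Exactly as in the discussion preceding Proposition~\ref{prop4.1} — applied to $\D(\cM)$, which is again, up to Tate twist, a variation of Hodge structure — this $\cF$ is quasi-isomorphic to a complex satisfying the three hypotheses of Theorem~\ref{prop1.9} with $l=n$ and $q=1$. Since $f\vert_{V}$ is an isomorphism and $U$ is smooth, $\bR\cH om_{U}(f\vert_{V\ast}\cF\vert_{V},\Omega^{n}_{U}[n])\simeq f\vert_{V\ast}\bR\cH om_{V}(\cF\vert_{V},\omega^{\bullet}_{V})$; and since $\cF\vert_{V}=Gr^{F}_{p}DR(\cM(w))$, Proposition~\ref{dualprop} applied to the pure Hodge module $\cM(w)$ on $V$ (whose dual is $\cM$) identifies this with $f\vert_{V\ast}Gr^{F}_{-p}DR(\cM)$. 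The identical chain of quasi-isomorphisms holds in the analytic category, where one uses in addition only that $Gr^{F}DR$ of a filtered variation of Hodge structure commutes with analytification. Hence Proposition~\ref{an} with $q=1$ says precisely that $(R^{i}j_{\ast}f\vert_{V\ast}Gr^{F}_{-p}DR(\cM))^{an}\to R^{i}j^{an}_{\ast}f^{an}\vert_{V^{an}\ast}Gr^{F}_{-p}DR(\cM^{an})$ is an isomorphism for $i\leq -n_{S}-2$.

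For the second map I would first note that $f$ is proper, so the local GAGA theorem \cite{SGA1} gives $(\bR f_{\ast}\cG)^{an}\simeq \bR f^{an}_{\ast}\cG^{an}$ for $\cG\in D^{b}_{coh}(\cO_{Y})$; applied to $\cG=Gr^{F}_{-p}DR(\cM(\ast E))$, together with the fact that forming $Gr^{F}_{p}DR$ and the meromorphic extension $\rho_{+}$ commute with analytification, this yields $(R^{i}f_{\ast}Gr^{F}_{-p}DR(\cM(\ast E)))^{an}\simeq R^{i}f^{an}_{\ast}Gr^{F}_{-p}DR(\cM^{an}(\ast E))$. Then I would analytify the natural restriction map of Proposition~\ref{prop4.1}, which is an isomorphism for $i\leq -n_{S}-2$; analytification is exact, so it stays an isomorphism, and via the GAGA identification on the source and the first isomorphism of the present theorem on the target it is identified with the natural analytic map in the statement. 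Composing gives the claim.

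The genuinely substantive point — the place I expect the real work — is the compatibility $(Gr^{F}_{-p}DR(\cM(\ast E)))^{an}\simeq Gr^{F}_{-p}DR(\cM^{an}(\ast E))$, i.e.\ that analytification of the underlying filtered $\cD$-module commutes with the $\rho_{+}$-extension: this cannot be argued purely formally, and one must instead read it off Saito's explicit log de Rham resolution \cite{saito2}, the relevant Deligne lattice being determined by local monodromy data and hence analytification-compatible. The remaining difficulty is bookkeeping — verifying that the restriction map of Proposition~\ref{prop4.1}, the duality quasi-isomorphisms of Proposition~\ref{dualprop}, and the GAGA comparison morphisms assemble into commuting squares, so that analytifying Proposition~\ref{prop4.1} really produces the natural analytic map appearing in the theorem.
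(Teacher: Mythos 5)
Your proposal is correct and takes essentially the same route as the paper: the paper's proof is exactly the combination of Proposition \ref{an} (applied with $\cF = Gr^{F}_{p}DR(\D(\cM)(!E))$, $l=n$, $q=1$, using the duality identifications already made in the proof of Proposition \ref{prop4.1}) for the first map, and Proposition \ref{prop4.1} together with local GAGA for the proper map $f$ for the second. Your explicit attention to the compatibility $(Gr^{F}_{-p}DR(\cM(\ast E)))^{an}\simeq Gr^{F}_{-p}DR(\cM^{an}(\ast E))$ via Saito's log de Rham complex and the Deligne lattice spells out a point the paper leaves implicit, but introduces no deviation in method.
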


 Again,  assume $f: Y \rightarrow X$ is a strong log resolution of $S$. Since $f\vert_{V}: V \rightarrow U$ is an isomorphism, any mixed Hodge module on $V$ may be regarded as a mixed Hodge module on $U.$ If $\cM \in HM_{U}(U,w)$ is a variation of Hodge structure, then there are spectral sequences
$$E^{a,b}_{2}=H^{a}(X, R^{b}j_{*}Gr^{F}_{-p}DR(\cM)) \Rightarrow H^{a+b}(U, Gr^{F}_{-p}DR(\cM))$$
$$^{'}E^{a,b}_{2}=H^{a}(X^{an}, R^{b}j^{an}_{*}Gr^{F}_{-p}DR(\cM^{an})) \Rightarrow H^{a+b}(U^{an}, Gr^{F}_{-p}DR(\cM^{an})).$$
By \cite{gaga}, for $a \in \mathbb{Z}$ and $b \leq -n_{S} -2$ there are isomorphisms 
$$H^{a}(X, R^{b}j_{*}Gr^{F}_{-p}DR(\cM)) \cong H^{a}(X^{an}, (R^{b}j_{*}Gr^{F}_{-p}DR(\cM))^{an}) \cong H^{a}(X^{an}, R^{b}j^{an}_{*}Gr^{F}_{-p}DR(\cM^{an})).$$
Hence, the two spectral sequences above give us 
$$H^{i}(U,Gr^{F}_{-p}DR(\cM)) \cong H^{i}(U^{an},Gr^{F}_{-p}DR(\cM^{an})) \quad \text{if $i \leq -n_{S} -2$}.$$
Therefore, by Theorem \ref{analyticThm.}, most of the results from this paper hold for Hodge modules on algebraic manifolds.

\printbibliography[
heading=bibintoc,
title={References}
] %Prints the entire bibliography with the title "Whole bibliography."

\end{document}